\theoremstyle{definition}
\newtheorem{example}[therm]{Example}
\newtheorem{warning}[therm]{Warning}
\DeclarePairedDelimiter{\pa}{(}{)}
\DeclarePairedDelimiter{\set}{\{}{\}}
\newcommand{\Nat}{\mathbb N}
\newcommand{\Rat}{\mathbb Q}
\newcommand{\Rea}{\mathbb R}
\newcommand{\Two}{\mathbf{2}}
\newcommand{\Sierp}{\mathbb S}
\newcommand{\Seqspace}{\mathbf A}
\newcommand{\Seqdom}{\mathcal A}
\newcommand{\Finseq}{A^\ast}
\newcommand{\Realdom}{\mathcal R}
\newcommand{\LowerReal}{\mathcal L}
\DeclareMathOperator{\nbhds}{\mathcal N}
\newcommand{\below}{\mathrel{\sqsubseteq}}
\newcommand{\sbelow}{\mathrel{\sqsubset}}
\newcommand{\aboveorder}{\mathrel{\sqsupseteq}}
\newcommand{\dirsup}{\bigsqcup}
\newcommand{\glb}{\mathrel{\sqcap}}
\DeclareMathOperator{\dset}{\downarrow}
\DeclareMathOperator{\upset}{\uparrow}
\newcommand\twoheaddownarrow{\rotatebox[origin=c]{270}{\(\twoheadrightarrow\)}}
\DeclareMathOperator{\ddset}{\twoheaddownarrow}
\newcommand\twoheaduparrow{\rotatebox[origin=c]{90}{\(\twoheadrightarrow\)}}
\DeclareMathOperator{\upupset}{\twoheaduparrow}
\DeclarePairedDelimiter{\steppa}{\llparenthesis}{\rrparenthesis}
\newcommand{\apart}{\mathrel{\#}}
\newcommand{\posnotbelow}{\mathrel{\;\not\!\not{\!\leq}}}
\newcommand{\apartvar}{\mathrel{\sharp}}
\newcommand{\lcompl}{{\lnot}\hspace{0.12em}}
\newcommand{\compl}{{\sim}}
\newcommand{\acompl}{-}
\newcommand{\setapart}{\mathrel{\bowtie}}
\DeclareMathOperator{\length}{length}
\newcommand{\initseg}[2]{\bar{#1}_{#2}}
\DeclarePairedDelimiter{\seq}{\langle}{\rangle}
\DeclareMathOperator{\Idl}{Idl}
\newcommand{\powerset}[1]{\mathcal P\pa*{#1}}
\newcommand{\To}{\Rightarrow}
\newcommand{\interior}[1]{\pa*{#1}^\circ}
\newcommand{\refined}{\mathrel{\upupset}}
\DeclareMathOperator{\SMax}{StrongMax}
\DeclareMathOperator{\lifting}{\mathcal L}
\DeclareMathOperator{\image}{image}
\newcommand{\arxivlink}[1]{\href{https://arxiv.org/pdf/#1}{\texttt{arXiv:#1}}}
\renewcommand{\doilink}[1]{\href{https://doi.org/#1}{\texttt{doi:#1}}}
\begin{document}

\lefttitle{T. de Jong} \righttitle{Apartness, Sharp Elements and the Scott
  Topology of Domains} \title{Apartness, Sharp Elements and the Scott Topology
  of Domains\footnote{A shorter version of this paper appeared as \emph{Sharp
      Elements and Apartness in Domains}. In A. Sokolova (ed.)  \emph{37th
      Conference on Mathematical Foundations of Programming Semantics (MFPS
      2021)}, {Electronic Proceedings in Theoretical Computer Science (EPTCS)},
    vol.~351, Open Publishing Association, 134--151.
    \doilink{10.4204/EPTCS.351.9}.}}

\papertitle{Paper}

\jnlPage{1}{23}
\jnlDoiYr{2023}
\doival{10.1017/S0960129523000282}

\begin{authgrp}
\author{Tom de Jong}
\affiliation{School of Computer Science, University of Birmingham,
             Birmingham, UK \\
             Email: \email{tom.dejong@nottingham.ac.uk}}
\end{authgrp}


\begin{abstract}
  Working constructively, we study continuous directed complete posets (dcpos)
  and the Scott topology.
  Our two primary novelties are a notion of intrinsic apartness and a notion of
  sharp elements.
  Being apart is a positive formulation of being unequal, similar to how
  inhabitedness is a positive formulation of nonemptiness.
  To exemplify sharpness, we note that a lower real is sharp if and only if it
  is located.
  Our first main result is that for a large class of continuous dcpos, the
  Bridges--{V\^i\c{t}\v{a}} apartness topology and the Scott topology coincide.
  Although we cannot expect a tight or cotransitive apartness on nontrivial
  dcpos, we prove that the intrinsic apartness is both tight and cotransitive
  when restricted to the sharp elements of a continuous dcpo.
  These include the strongly maximal elements, as studied by Smyth and
  Heckmann. We develop the theory of strongly maximal elements highlighting its
  connection to sharpness and the Lawson topology.
  Finally, we illustrate the intrinsic apartness, sharpness and strong
  maximality by considering several natural examples of continuous dcpos:
  the Cantor and Baire domains, the partial Dedekind reals, the lower reals and
  finally, an embedding of Cantor space into an exponential of lifted sets.
\end{abstract}

\begin{keywords}
  constructive mathematics; domain theory; continuous directed complete posets
  (dcpos); Scott topology; apartness; sharp elements; strongly maximal elements
\end{keywords}

\maketitle

\section{Introduction}\label{sec:introduction}
Domain theory~\citep{AbramskyJung1995} is rich with applications in semantics of
programming languages~\citep{Scott1993,Scott1982,Plotkin1977}, topology and
algebra~\citep{GierzEtAl2003}, and higher-type
computability~\citep{LongleyNormann2015}. The basic objects of domain theory are
directed complete posets (dcpos), although we often restrict our attention to
algebraic or continuous dcpos which are generated by so-called compact elements
or, more generally, by the so-called way-below relation
(Section~\ref{sec:preliminaries}).
We examine the Scott topology on dcpos using an apartness relation and a notion
of sharp elements. Our work is constructive in the sense that we do not assume
the principle of excluded middle or choice axioms, but we do use impredicativity
and powersets in particular, see Section~\ref{sec:foundations} for more details
on our foundational setup.

Classically, i.e.\ when assuming excluded middle, a dcpo with the Scott topology
satisfies \(T_0\)-separation: if two points have the same Scott open
neighbourhoods, then they are equal.
This holds constructively if we restrict to continuous dcpos.
A~classically equivalent formulation of \(T_0\)-separation is: if \(x \neq y\),
then there is a Scott open separating \(x\) and \(y\), i.e.\ containing \(x\)
but not \(y\) or vice versa.
This second formulation is equivalent to excluded middle.
This brings us to the first main notion of this paper
(Section~\ref{sec:intrinsic-apartness}). We say that \(x\)~and~\(y\) are
intrinsically apart, written \(x \apart y\), if there is a Scott open containing
\(x\) but not \(y\) or vice versa. Then \(x \apart y\) is a positive formulation
of \(x \neq y\), similar to how inhabitedness (i.e.\ \(\exists\,{x \in X}\)) is
a positive formulation of nonemptiness (i.e.\ \(X \neq \emptyset\)).

This definition works for any dcpo, but the intrinsic apartness is mostly of
interest to us for continuous dcpos. In fact, the apartness really starts to
gain traction for continuous dcpos that have a basis satisfying certain
decidability conditions.
For example, we prove that for such continuous dcpos, the apartness
topology~\citep{BridgesVita2011} and the Scott topology coincide
(Section~\ref{sec:apartness-topology}).
Thus our work may be regarded as showing that the constructive framework by
Bridges and V\^i\c{t}\v{a} is applicable to domain theory.
It should be noted that these decidability conditions are satisfied by the major
examples in applications of domain theory to topology and computation. Moreover,
these conditions are stable under products of dcpos and, in the case of bounded
complete algebraic dcpos, under exponentials (Section~\ref{sec:preliminaries}).

In \cite[p.~7]{BridgesRichman1987}, \cite[p.~8]{MinesRichmanRuitenburg1988} and
\cite[p.~8]{BridgesVita2011}, an irreflexive and symmetric relation is called an
{inequality (relation)} and the symbol~\({\neq}\) is used to denote it. In
\cite[Definition~2.1]{BishopBridges1985}, an inequality is moreover required to
be cotransitive:
\begin{center}
  if \(x \neq y\), then \(x \neq z\) or \(y \neq z\) for any
    \(x\), \(y\) and \(z\).
\end{center}
The latter is called a {preapartness} in
\cite[Section~8.1.2]{TroelstraVanDalen1988} and the symbol~\({\apart}\) is used
to denote it, reserving \({\neq}\) for the logical negation of equality and the
word {apartness} for a relation that is also tight: if \(\lnot(x \apart y)\),
then \(x = y\).

\begin{warning}\label{warning}
  \marginnote{\dbend}
  We deviate from the above and use the word apartness and the symbol
  \({\apart}\) for an irreflexive and symmetric relation, so we do not require
  it to be cotransitive or tight.
\end{warning}
The reasons for our choice of terminology and notations are as follows: (i) we
wish to reserve \(\neq\) for the negation of equality as in
\cite[Section~8.1.2]{TroelstraVanDalen1988}; (ii) the word inequality is
confusingly also used in the context of posets to refer to the partial order;
and finally, (iii) the word inequality seems to suggest that the negation of the
inequality relation is an equivalence relation, but, in the absence of
cotransitivity, it need not be.

Actually, we prove that no apartness on a nontrivial dcpo can be
cotransitive or tight unless (weak) excluded middle holds.
However, there is a natural collection of elements for which the intrinsic
apartness is both tight and cotransitive: the sharp elements
(Section~\ref{sec:tightness-cotransitivity-sharpness}). Sharpness is slightly
involved in general, but it is easy to understand for algebraic dcpos: an
element \(x\) is sharp if and only if for every compact element it is decidable
whether it is below \(x\).
Moreover, the notion is quite natural in many examples. For instance, the sharp
elements of a powerset are exactly the decidable subsets and the sharp lower
reals are precisely the located ones.

An import class of sharp elements is given by the strongly maximal elements
(Section~\ref{sec:strongly-maximal-elements}). These were studied in a classical
context in~\citep{Smyth2006} and~\citep{Heckmann1998}, because of
their desirable properties. For instance, while the subspace of maximal elements
may fail to be Hausdorff, the subspace of strongly maximal elements is both
Hausdorff (two distinct points can be separated by disjoint Scott opens) and
regular (every neighbourhood contains a Scott closed neighbourhood).
As shown by \cite{Smyth2006}, strong maximality is closely related to the Lawson
topology. Specifically, Smyth proved that a point \(x\) is strongly maximal
if and only if every Lawson neighbourhood of \(x\) contains a Scott
neighbourhood of \(x\).
Using sharpness, we offer a constructive proof of~this.

Finally, in Section~\ref{sec:examples} we illustrate the above notions by
presenting natural examples of continuous dcpos, many of which embed well-known
spaces as the strongly maximal elements. Specifically, we consider
the Cantor and Baire domains, the partial Dedekind reals, the lower reals, and
an embedding of Cantor space into an exponential of lifted sets.

\subsection{Foundations}\label{sec:foundations}
We work informally in an impredicative set theory with intuitionistic logic. If
we wish to pin down a particular formal system, then we could choose
IZF~\citep{Friedman1973}, which has models in Grothendieck and realizability
toposes \citep{Fourman1980,Hayashi1981,JoyalMoerdijk1995}.
In particular we have powersets and allow for the usual impredicative
definitions of interior and closure in topology.
Functions, as is standard in set theory, are encoded as single-valued and total
relations.
We stress that our work is fully compatible with classical logic.

\subsection{Contributions}
Our primary objective is to explore the Scott topology of continuous dcpos in a
constructive setting where we do not rely on excluded middle or the axiom of
choice. This leads us to study an apartness relation, as well as the subsets of
sharp and strongly maximal elements, as explained in the introduction.

We list some main results: Theorem~\ref{apartness-Scott-topology} says the
framework of~\cite{BridgesVita2011} is applicable to constructive domain theory;
Theorem~\ref{sharp-tight-cotransitive} shows that the intrinsic apartness is
tight and cotransitive for sharp elements. We have a supply of sharp elements
thanks to Theorems~\ref{sharp-basis} and \ref{sharpness-product-exponential}, as
well as Proposition~\ref{sharp-if-strongly-maximal} which tells us that strongly
maximal elements are sharp.
Finally, the examples of Section~\ref{sec:examples} provide important
illustrations of the theory. For example, Theorem~\ref{Dedekind-iota-homeo}
shows that the real line is homeomorphic to the subspace of strongly maximal
elements in the continuous domain of partial Dedekind reals, and moreover, that
the homeomorphism reflects and preserves apartness.

Some notions, such as sharpness, trivialize in a classical setting. However, our
simplification of Smyth's definition of strong maximality
(Section~\ref{sec:strongly-maximal-elements}) and
Proposition~\ref{strong-maximality-product-exponential} are new contributions in
a classical setting as well.
Moreover, our constructive treatment could possibly inform a classical, but
effective treatment of domain theory~\citep{Smyth1977}.

\subsection{Related work}
There are numerous accounts of basic domain theory in several constructive
systems, such as
\citep{SambinValentiniVirgili1996,Negri1998,Negri2002,MaiettiValentini2004,Kawai2017,Kawai2021}
in the predicative setting of formal
topology~\citep{Sambin1987,CoquandEtAl2003}, as well as works in various type
theories: \citep{Hedberg1996} in (a version of) Martin-L\"of Type Theory,
\citep{Lidell2020} in Agda, \citep{BentonKennedyVarming2009,Dockins2014} in Coq
and our previous work~\citep{deJongEscardo2021a,deJong2022} in univalent
foundations. Besides that, the
papers~\citep{BauerKavkler2009,PattinsonMohammadian2021} are specifically aimed
at program extraction.

Our work is not situated in formal topology and we work informally in
impredicative set theory without using excluded middle or choice axioms. We
also consider completeness with respect to all directed subsets and not just
\(\omega\)-chains as is done
in~\citep{BauerKavkler2009,PattinsonMohammadian2021}. The principal
contributions of our work are the aforementioned notions of intrinsic apartness
and sharp elements, although the idea of sharpness also appears in formal
topology: an element of a continuous dcpo is sharp if and only if its filter of
Scott open neighbourhoods is located in the sense of \cite{Spitters2010} and
\cite{Kawai2017}.

If, as advocated in \citep{Abramsky1987,Vickers1989,Smyth1993}, we think of
(Scott) opens as observable properties, then this suggests that we label two
points as apart if we have made conflicting observations about them, i.e.\ if
there are disjoint opens separating the points. Indeed, (an equivalent
formulation of) this notion is used in \cite[p.~362]{Smyth2006}.
While these notions are certainly useful, both in the presence and absence of
excluded middle, our apartness serves a different purpose: It is a positive
formulation of the negation of equality used when reasoning about the Scott
topology on a dcpo, which (classically) is only a \(T_0\)-space that isn't
Hausdorff in general. By contrast, an apartness based on disjoint opens would
supposedly perform a similar job for a Hausdorff space, such as a dcpo with the
Lawson topology.

Finally, \cite{vonPlato2001} gives a constructive account of so-called positive
partial orders: sets with a binary relation \({\not\leq}\) that is irreflexive
and cotransitive (i.e.\ if \(x \not\leq y\), then \(x \not\leq z\) or
\(y \not\leq z\) for any elements \(x\), \(y\) and~\(z\)).
Our notion \({\posnotbelow}\) from Definition~\ref{def:specialization} bears
some similarity, but our work is fundamentally different for two
reasons. Firstly, \({\posnotbelow}\) is not cotransitive. Indeed, we cannot
expect such a cotransitive relation on nontrivial dcpos, cf.\
Theorem~\ref{tight-cotransitive-wem}. Secondly, in \citep{vonPlato2001} equality
is a derived notion from \({\not\leq}\), while equality is primitive for us.

\subsection{Acknowledgements}
I am very grateful to Mart\'in Escard\'o for many discussions (including one
that sparked this paper) and valuable suggestions for improving the
exposition. In particular, the terminology ``sharp'' is due to Mart\'in and
Theorem~\ref{sharp-iff-located} was conjectured by him.
I~should also like to thank Steve Vickers for his interest and remarks.
Furthermore, I thank the anonymous referees of the shorter, related MFPS paper
for their helpful comments and questions.
Finally, I am grateful to the anonymous referee whose comments and questions
have led to the addition of
Theorem~\ref{sharpness-product-exponential},
Proposition~\ref{strong-maximality-product-exponential} and
Example~\ref{not-not-total-maximal}.

\section{Preliminaries}\label{sec:preliminaries}
We give the basic definitions and results in the theory of (continuous)
dcpos. It is not adequate to simply refer the reader to classical texts on
domain theory~\citep{AbramskyJung1995,GierzEtAl2003}, because two classically
equivalent definitions need not be constructively equivalent, and hence we need
to make choices here. For example, while classically every Scott open subset is
the complement of a Scott closed subset, this does not hold constructively
(Lemma~\ref{complement-of-open-is-closed}).

The results presented here are all provable constructively. Constructive proofs
of standard order-theoretic results, such as Lemma~\ref{way-below-basics} and
Proposition~\ref{interpolation} (the interpolation property), can be found
in~\citep{deJongEscardo2021a}, or alternatively, the author's PhD
thesis~\citep{deJong2022}, and will not be repeated here.
Since the Scott topology is not treated in the above references, the proofs of
any results involving topology (such as Proposition~\ref{basis-is-dense}) are
spelled out in full.

Finally, in Section~\ref{sec:decidability-conditions} we introduce and study
some decidability conditions on bases of dcpos that will make several
appearances throughout the paper. These decidability conditions always
hold if excluded middle is assumed.

\begin{definition}[Directed complete poset (dcpo)]\hfill
  \begin{enumerate}
  \item A subset \(S\) of a poset \((X,\below)\) is \emph{directed} if it is
    \emph{inhabited} (meaning there exists \(s \in S\)) and \emph{semidirected}:
    for every two points \(x,y \in S\) there exists \(z \in S\) with
    \(x \below z\) and \(y \below z\).
  \item A \emph{directed complete poset (dcpo)} is a poset where every
    directed subset \(S\) has a supremum, denoted by~\(\dirsup S\).
  \item A dcpo is \emph{pointed} if it has a least element, typically denoted by
    \(\bot\).
  \end{enumerate}
\end{definition}

Notice that a poset is a pointed dcpo if and only if it has suprema for all
semidirected subsets.
In fact, given a pointed dcpo \(D\) and a semidirected subset \(S \subseteq D\),
we can consider the directed subset \(S \cup \set{\bot}\) of \(D\) whose
supremum is also the supremum of \(S\).

\begin{definition}[Way-below relation \({\ll}\)]
  An element \(x\) of a dcpo \(D\) is \emph{way below} an element \(y \in D\),
  denoted \(x \ll y\), if for every directed subset \(S\) with
  \(y \below \dirsup S\) there exists \(s \in S\) such that \(x \below s\)
  already.
\end{definition}
\begin{lemma}\label{way-below-basics}
  The way-below relation enjoys the following properties:
  \begin{enumerate}
  \item it is transitive;
  \item if \(x \below y \ll z\), then \(x \ll z\) for every \(x\), \(y\) and
    \(z\);
  \item if \(x \ll y \below z\), then \(x \ll z\) for every \(x\), \(y\) and
    \(z\).
  \end{enumerate}
\end{lemma}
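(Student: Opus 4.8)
The plan is to unwind the definition of the way-below relation in each case, instantiating it at a judiciously chosen directed set. I will treat items (2) and (3) first, since they are immediate, and then derive (1) from (2). For (2), suppose \(x \below y \ll z\) and let \(S\) be a directed subset with \(z \below \dirsup S\); applying \(y \ll z\) yields some \(s \in S\) with \(y \below s\), and then transitivity of \(\below\) gives \(x \below s\), so \(x \ll z\). For (3), suppose \(x \ll y \below z\) and again let \(S\) be directed with \(z \below \dirsup S\); transitivity of \(\below\) gives \(y \below \dirsup S\), so \(x \ll y\) produces \(s \in S\) with \(x \below s\), whence \(x \ll z\).

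For (1), I will first record the standard observation that \(x \ll y\) implies \(x \below y\): the singleton \(\set{y}\) is directed (it is inhabited and trivially semidirected) and \(y \below \dirsup\set{y} = y\), so the definition of \(x \ll y\), applied to \(\set{y}\), supplies an element of \(\set{y}\) above \(x\), i.e.\ \(x \below y\). Now if \(x \ll y\) and \(y \ll z\), then \(x \below y\) by this observation, so \(x \below y \ll z\), and item (2) gives \(x \ll z\) as required. (Alternatively one can argue (1) directly, threading the singleton trick through the proof of transitivity, but reusing (2) is cleaner.)

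All of this is routine; the only thing to watch is that the chosen auxiliary sets — in particular the singleton used in the proof of (1) — are genuinely directed, and that no case distinctions are invoked, so the argument goes through constructively without any use of excluded middle. In that sense there is no real obstacle here: the lemma merely fixes the basic calculus of \(\ll\) that the later, more substantial results depend on.
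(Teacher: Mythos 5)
Your proof is correct: items (2) and (3) follow directly from the definition of \(\ll\) with the chosen directed set \(S\), and deriving (1) from (2) via the observation that \(x \ll y\) implies \(x \below y\) (using the singleton \(\set{y}\)) is sound and fully constructive. The paper itself omits the argument, noting only that the lemma is ``easily verified'' and deferring to the cited reference, and your argument is exactly the routine one intended there.
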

\begin{definition}[Continuity of a dcpo]
  A dcpo \(D\) is \emph{continuous} if for every element \(x \in D\),
  the subset \(\ddset x \coloneqq \set*{y \in D \mid y \ll x}\) is directed and
  its supremum is \(x\).
\end{definition}
\begin{definition}[Compactness and algebraicity]
  An element of a dcpo is \emph{compact} if it is way below itself.  A dcpo
  \(D\) is \emph{algebraic} if for every element \(x \in D\), the
  subset \(\set{c \in D \mid c \below x \text{ and \(c\) is
    compact}}\) is directed with supremum \(x\).
\end{definition}
\begin{proposition}
  Every algebraic dcpo is continuous.
\end{proposition}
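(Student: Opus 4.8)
The goal is to show that for an algebraic dcpo $D$, for every $x \in D$, the set $\ddset x = \set{y \in D \mid y \ll x}$ is directed with supremum $x$. The plan is to compare $\ddset x$ with the set of compact elements below $x$, namely $K(x) \coloneqq \set{c \in D \mid c \below x \text{ and } c \text{ is compact}}$, which by algebraicity is directed with supremum $x$.

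First I would observe the inclusion $K(x) \subseteq \ddset x$: if $c$ is compact and $c \below x$, then $c \ll c$, so $c \ll x$ by Lemma~\ref{way-below-basics}(3) (since $c \ll c \below x$). Hence $\dirsup K(x) \below \dirsup \ddset x$ whenever the latter exists; since $\dirsup K(x) = x$ and clearly every element of $\ddset x$ is $\below x$ (as $y \ll x$ implies $y \below x$ by applying the way-below property to the directed set $\ddset x$\,---\,or more directly, by a standard argument), we will get $\dirsup \ddset x = x$ once we know $\ddset x$ is directed. So the crux is directedness of $\ddset x$.

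For directedness of $\ddset x$: it is inhabited because $K(x)$ is inhabited (algebraicity) and $K(x) \subseteq \ddset x$. For semidirectedness, take $y_1, y_2 \ll x$. Since $x = \dirsup K(x)$ and $K(x)$ is directed, the definition of $\ll$ gives some $c_1 \in K(x)$ with $y_1 \below c_1$ and some $c_2 \in K(x)$ with $y_2 \below c_2$; then directedness of $K(x)$ yields $c \in K(x)$ with $c_1 \below c$ and $c_2 \below c$, so $y_1 \below c$ and $y_2 \below c$ with $c \in K(x) \subseteq \ddset x$. This establishes that $\ddset x$ is directed.

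Finally, reassembling: $\ddset x$ is directed, every element of it is below $x$, and it contains the directed set $K(x)$ whose supremum is already $x$; hence $\dirsup \ddset x = x$. Thus $D$ is continuous. I do not anticipate a genuine obstacle here\,---\,the only point requiring a little care constructively is making sure every step (the appeal to the definition of $\ll$ against the directed set $K(x)$, the use of Lemma~\ref{way-below-basics}) uses only the hypotheses as stated and no excluded middle, which it does.
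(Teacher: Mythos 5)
Your proof is correct, and it is the standard argument; the paper states this proposition without proof, precisely because the routine verification you give (compact elements below \(x\) are way below \(x\), so the directed set witnessing algebraicity sits inside \(\ddset x\) and forces it to be directed with supremum \(x\)) is the intended one, and it is constructively unproblematic. The only phrase to tidy is the parenthetical justification that \(y \ll x\) implies \(y \below x\): appealing to \(\ddset x\) itself would be circular, but instantiating the definition of \(\ll\) at the directed set \(\set{x}\) (or at the set of compact elements below \(x\)) settles it immediately.
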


\subsection{The Scott topology}
We stress that constructively it is necessary to define Scott closed and Scott
open subsets independently, see Lemma~\ref{complement-of-open-is-closed}.

\begin{definition}[Scott topology]\hfill
  \begin{enumerate}
  \item A subset \(C\) of a dcpo \(D\) is \emph{Scott closed} if it is
    closed under directed suprema and a lower set: if \(x \below y \in C\), then
    \(x \in C\) too.
  \item A subset \(U\) of a dcpo \(D\) is \emph{Scott open} if it is an
    upper set and for every directed subset \(S \subseteq D\) with
    \(\dirsup S \in U\), there exists \(s \in S\) such that \(s \in U\) already.
  \end{enumerate}
\end{definition}
\begin{example}\label{ex:Scott-opens}
  For any element \(x\) of a dcpo \(D\), the subset
  \(\dset x \coloneqq \set{y \in D \mid y \below x}\) is Scott closed.
  If \(D\) is continuous, then the subset
  \(\upupset x \coloneqq \set{y \in D \mid x \ll y}\) is Scott open.
  (One proves this using the interpolation property, which is
  Proposition~\ref{interpolation} below.)
  Moreover, if \(D\) is continuous, then the set
  \(\set{\upupset x \mid x \in X}\) is a basis for the Scott topology on \(D\).
\end{example}
\begin{lemma}\label{complement-of-open-is-closed}
  The complement of a Scott open subset is Scott closed.
  The converse holds if and only if excluded middle does, as we prove in
  Proposition~\ref{complement-of-closed-is-open-em}.
\end{lemma}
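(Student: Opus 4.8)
The plan is to verify directly that the complement \(D \setminus U\) of a Scott open subset \(U\) of a dcpo \(D\) satisfies the two defining conditions of a Scott closed set, namely that it is a lower set and that it is closed under directed suprema. The key observation that makes this go through constructively is that in each case the goal is a \emph{negated} statement (``\(x \notin U\)''), so we are allowed to argue by assuming the positive statement and deriving a contradiction, without ever needing to decide membership in \(U\).

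First I would check that \(D \setminus U\) is a lower set. Suppose \(x \below y\) and \(y \in D \setminus U\); to show \(x \in D \setminus U\) it suffices to derive a contradiction from the assumption \(x \in U\). But \(U\) is an upper set, so from \(x \in U\) and \(x \below y\) we get \(y \in U\), contradicting \(y \notin U\). Next I would check closure under directed suprema. Let \(S \subseteq D \setminus U\) be directed; to show \(\dirsup S \in D \setminus U\) it suffices to derive a contradiction from \(\dirsup S \in U\). Since \(U\) is Scott open and \(S\) is directed with \(\dirsup S \in U\), there exists \(s \in S\) with \(s \in U\); but \(s \in S \subseteq D \setminus U\), so \(s \notin U\), a contradiction.

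I do not expect any genuine obstacle here: both verifications are short and use only the definitions of Scott open and Scott closed together with the fact that a negation may be proved by \emph{reductio}. The only point worth emphasising is the constructive one already flagged in the statement, that these arguments only ever establish negations of membership, whereas the converse direction would require the positive disjunction ``\(x \in U\) or \(x \notin U\)'', which is exactly what fails without excluded middle; that direction is treated separately in Proposition~\ref{complement-of-closed-is-open-em}.
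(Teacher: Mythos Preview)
Your proof is correct and is exactly the standard constructive verification: the complement is a lower set because Scott opens are upper sets, and it is closed under directed suprema because Scott opens are inaccessible by directed suprema, with both steps being proofs of negations and hence constructively valid via \emph{reductio}. The paper does not actually supply a proof of this lemma (it is stated without proof and treated as routine), so there is nothing to compare against; your argument is precisely what one would write to fill in the details.
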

\begin{proof}
  Let \(U \subseteq D\) be a Scott open of a dcpo \(D\). We show that
  \(C \coloneqq D \setminus U\) is Scott closed.
  If \(x \below y\) with \(y \in C\), then \(x \in C\), for assuming \(x \in U\)
  leads to \(y \in U\) as \(U\) is Scott open, contradicting that \(y \in C\).
  Now suppose that \(S \subseteq C\) is directed and assume for a contradiction
  that \(\bigsqcup S \in U\). By Scott openness of \(U\), there exists
  \(s \in S\) such that \(s \in U\), but this contradicts \(S \subseteq C\).
\end{proof}

\begin{definition}[Interior and closure]\label{def:interior-closure}
  In a topological space \(X\), the \emph{interior} of a subset
  \(S \subseteq X\) is the largest open of \(X\) contained in \(S\), i.e.\ the
  interior of \(S\) is
  \(\bigcup\set{U \in \powerset{X} \mid U \subseteq S, U \text{ is open}}\).
  Dually, the \emph{closure} of a subset \(S \subseteq X\) is the smallest
  closed subset of \(X\) that contains~\(S\).
\end{definition}

\subsection{(Abstract) bases}
Ideal completions of abstract bases provide a source of continuous (and
algebraic) dcpos. In general, the notion of a basis is constructively quite
interesting, as explained in the next subsection.

\begin{definition}[Basis for a dcpo]
  A \emph{basis} for a dcpo \(D\) is a subset \(B \subseteq D\) such that for
  every element \(x \in D\), the subset \({B \cap \ddset{x}}\) is directed with
  supremum \(x\).
\end{definition}

\begin{lemma}
  A dcpo is continuous if and only if it has a basis and a dcpo is algebraic if
  and only if it has a basis of compact elements.
  Moreover, if \(B\) is a basis for an algebraic dcpo \(D\), then \(B\) must
  contain every compact element of \(D\). Hence, an algebraic dcpo has a unique
  smallest basis consisting of compact elements.
\end{lemma}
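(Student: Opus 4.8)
The plan is to prove the four implications separately, in each case reducing a claim about a possibly large set --- either $\ddset{x}$ or the set of compact elements below $x$ --- to the corresponding claim about its intersection with a basis. The two forward directions are immediate. If $D$ is continuous, then $B \coloneqq D$ is a basis, since $D \cap \ddset{x} = \ddset{x}$ is directed with supremum $x$ by definition of continuity. If $D$ is algebraic, then I would take $B$ to be the set of compact elements and use Lemma~\ref{way-below-basics} to observe that for a compact $c$ one has $c \below x$ iff $c \ll x$ (the nontrivial direction being $c \ll c \below x \Rightarrow c \ll x$); hence $\set*{c \in D \mid c \below x \text{ and } c \text{ compact}}$ coincides with $B \cap \ddset{x}$, which algebraicity tells us is directed with supremum $x$.

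For the converses, suppose first that $D$ has a basis $B$ and fix $x \in D$; I must show $\ddset{x}$ is directed with supremum $x$. Inhabitedness is inherited from $B \cap \ddset{x}$. For semidirectedness, given $y_1, y_2 \ll x$, I would use $x = \dirsup\pa*{B \cap \ddset{x}}$ together with $y_i \ll x$ to find $b_i \in B \cap \ddset{x}$ with $y_i \below b_i$, then invoke directedness of $B \cap \ddset{x}$ to obtain $b \in B \cap \ddset{x}$ above both; since $b \ll x$ and $y_1, y_2 \below b$, this $b$ witnesses semidirectedness. The supremum is pinned down by the sandwich $B \cap \ddset{x} \subseteq \ddset{x} \subseteq \dset{x}$, which forces $\dirsup \ddset{x} = x$. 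If moreover $B$ consists of compact elements, then $D$ is continuous by the previous paragraph, and the very same ``dominate by a basis element, then take a common upper bound'' argument --- now applied to compact elements below $x$, whose being below $x$ upgrades to being way below $x$ --- shows $\set*{c \in D \mid c \below x \text{ and } c \text{ compact}}$ is directed, with supremum $x$ again by a sandwich.

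For the ``moreover'', let $D$ be algebraic with basis $B$ and let $c$ be compact. Since $B \cap \ddset{c}$ is directed with supremum $c$ and $c \ll c$, there is $b \in B \cap \ddset{c}$ with $c \below b$; but $b \ll c$ gives $b \below c$, so $c = b \in B$. Hence every basis contains every compact element, and since the compact elements do form a basis (forward direction of the second equivalence), they constitute the unique smallest basis, consisting of compact elements.

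The hard part is essentially bookkeeping: the only recurring idea is that an element way below (or compact and below) $x$ is dominated by a basis element below $x$, after which directedness of the basis supplies a common upper bound inheriting the relevant property. No case distinctions are required anywhere, so the argument is constructively valid without further comment.
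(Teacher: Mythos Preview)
The paper does not supply a proof of this lemma: it is stated as a standard fact, with the surrounding text referring the reader to~\cite{deJongEscardo2021a} for constructive proofs of such results. So there is no paper proof to compare against.

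That said, your argument is correct and constructively valid. The key moves --- taking $B = D$ (resp.\ $B = $ compact elements) for the forward directions, and for the converses using $y_i \ll x = \dirsup(B \cap \ddset{x})$ to dominate each $y_i$ by some $b_i \in B \cap \ddset{x}$ before invoking directedness --- are exactly the standard ones, and your sandwich $B \cap \ddset{x} \subseteq \ddset{x} \subseteq \dset{x}$ cleanly pins down the supremum. The ``moreover'' clause is also fine: from $c \ll c = \dirsup(B \cap \ddset{c})$ you get $c \below b$ for some $b \in B$ with $b \ll c$, hence $b = c \in B$ by antisymmetry.
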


\begin{example}[Kuratowski finite subsets]\label{ex:powerset}
  The powerset \(\powerset{X}\) of any set \(X\) ordered by inclusion and with
  suprema given by unions is a pointed algebraic dcpo. Its compact elements are
  the Kuratowski finite subsets of \(X\).
  A set \(X\) is \emph{Kuratowski finite} if it is finitely enumerable, that is,
  there exists a surjection \(\set{0,\dots,n-1} \twoheadrightarrow X\) for some
  number \(n \in \Nat\).
\end{example}

\begin{definition}[Sierpi\'nski domain \(\Sierp\)]\label{def:Sierpinski-domain}
  The \emph{Sierpi\'nski domain} \(\Sierp\) is the free pointed dcpo on a single
  generator.
  We can realize \(\Sierp\) as the set of truth values, i.e.\ as the powerset
  \(\powerset{\set{*}}\) of a singleton. The compact elements of \(\Sierp\) are
  exactly the elements \(\bot \coloneqq \emptyset\) and
  \(\top \coloneqq \set{*}\).
\end{definition}

\begin{lemma}\label{basis-basis}
  For every continuous dcpo \(D\), if a subset \(B \subseteq D\) is a basis for
  the dcpo \(D\), then \(\set{\upupset b \mid b \in B}\) is a basis for the
  Scott topology on \(D\).
\end{lemma}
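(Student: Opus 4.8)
The plan is to establish the two implications separately; the forward one is routine, so I would dispatch it first. Suppose $B$ is a basis for the dcpo $D$. Then $D$ is continuous, since a dcpo with a basis is continuous, so by Example~\ref{ex:Scott-opens} each set $\upupset b$ with $b \in B$ is Scott open, and it only remains to show that every Scott open arises as a union of such sets. Given a Scott open $U$ and a point $x \in U$, I would use that $B \cap \ddset x$ is directed with $\dirsup(B \cap \ddset x) = x \in U$: since $U$ is Scott open this produces $b \in B \cap \ddset x$ with $b \in U$, and then $b \ll x$ gives $x \in \upupset b$, while $U$ being an upper set containing $b$ gives $\upupset b \subseteq \upset b \subseteq U$; hence $x \in \upupset b \subseteq U$.

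For the converse, suppose $\{\upupset b \mid b \in B\}$ is a basis for the Scott topology; in particular each $\upupset b$ is Scott open. I would first record the useful fact that $\upupset y$ is then Scott open for \emph{every} $y \in D$, because $\upupset y = \bigcup\{\upupset b \mid b \in B,\ b \ll y\}$ and this index set is inhabited (the Scott open $D$ is covered by basic sets, so one of them surrounds $y$). Next, fixing $x \in D$, I must show $B \cap \ddset x$ is directed with supremum $x$. Inhabitedness is immediate from the basis property applied to the Scott open $D \ni x$, and $x$ is plainly an upper bound of $B \cap \ddset x$. For directedness, given $b_1, b_2 \in B \cap \ddset x$, the Scott open $\upupset b_1 \cap \upupset b_2$ contains $x$, so there is $b_3 \in B$ with $x \in \upupset b_3 \subseteq \upupset b_1 \cap \upupset b_2$, whence $b_3 \ll x$ and $b_3 \in B \cap \ddset x$. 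Finally, for the supremum, I would show that any upper bound $y$ of $B \cap \ddset x$ satisfies $x \below y$, using that $x \below y$ holds as soon as every Scott open containing $x$ contains $y$.

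The hard part is this converse direction. The basis property only produces a basic open $\upupset b_3$ \emph{refining} $\upupset b_1 \cap \upupset b_2$, and it does not by itself follow that $b_1 \below b_3$ and $b_2 \below b_3$ — in general an inclusion $\upupset c \subseteq \upupset a$ need not entail $a \below c$. Likewise, the basis property does not at once place an element of $B \cap \ddset x$ inside a given Scott open neighbourhood of $x$, which is exactly what the supremum argument needs. Converting these topological refinements into genuine order relations is the obstacle I expect to have to confront; I would attack it using the preliminary observation above (so as to apply the basis property also to sets of the form $\upupset y$) together with the interpolation property (Proposition~\ref{interpolation}), choosing the intermediate basis elements so that the required inequalities hold. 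Once that bookkeeping is done, directedness and the supremum follow, and with them the equivalence.
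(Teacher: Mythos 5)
Your forward direction is correct and is the standard argument: since a dcpo with a basis is continuous, each \(\upupset b\) is Scott open by Example~\ref{ex:Scott-opens}, and directedness of \(B \cap \ddset x\) together with Scott openness of \(U\) produces \(b \in B \cap \ddset x \cap U\) with \(x \in \upupset b \subseteq U\). (For what it is worth, the paper gives no proof of this lemma at all --- it is quoted as a standard result whose constructive proofs are deferred to the literature --- so your argument has to stand on its own.)

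The converse, which you rightly identify as the hard half, is not actually proved, and the repair you sketch would not close it. First, the preliminary claim \(\upupset y = \bigcup\set{\upupset b \mid b \in B,\ b \ll y}\) fails: once the index set is inhabited, the right-hand side contains \(y\) itself (since \(b \ll y\) means \(y \in \upupset b\)), whereas \(y \in \upupset y\) only if \(y\) is compact; and with the relation reversed (\(y \ll b\)) the needed inclusion \(\upupset y \subseteq \bigcup\set{\upupset b \mid b \in B,\ y \ll b}\) is exactly interpolation through \(B\), i.e.\ it presupposes the conclusion. Second, Proposition~\ref{interpolation} is a statement about continuous dcpos; in this direction continuity of \(D\) is part of what is being established, so invoking interpolation is circular unless you first derive continuity from the topological hypothesis, which you do not indicate how to do. Third, your supremum step rests on ``\(x \below u\) as soon as every Scott open containing \(x\) contains \(u\)'', i.e.\ that the specialization preorder refines \(\below\); the paper proves this only for continuous dcpos (Lemma~\ref{order-specialization}, via Lemma~\ref{below-in-terms-of-way-below}), and for a bare dcpo it is precisely the kind of passage from the Scott topology back to the order that cannot be taken for granted constructively --- so this is circular at the point of use. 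Even granting it, you give no argument that an upper bound \(u\) of \(B \cap \ddset x\) lies in every Scott open \(U \ni x\): the hypothesis only yields \(b \in B\) with \(x \in \upupset b \subseteq U\), and nothing places an element of \(U\) below \(u\) --- the very obstacle you describe for directedness recurs here, and ``choosing the intermediate basis elements so that the required inequalities hold'' is not something the refinement data lets you do. So what you have is a correct proof of the easy implication together with an accurate diagnosis of where the real work lies, but the right-to-left implication remains open.
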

\begin{proof}
  Suppose that \(B\) is a basis of the continuous dcpo \(D\) and let
  \(U \subseteq D\) be an arbitrary Scott open.
  We claim that \(U = \bigcup\set{\upupset b \mid b \in B \cap U}\) which would
  finish the proof.
  If \(b \in B \cap U\), then \(\upupset b \subseteq U\) because \(U\) is upper
  closed.
  Conversely, if \(x \in U\), then, since \(B\) is a basis, \(x\) is the
  directed supremum \(\bigsqcup \set{b \in B \mid b \ll x}\).
  Because \(U\) is Scott open, there must exist \(b \in B\) with \(b \ll x\)
  such that \(b \in U\). Hence, \(b \in B \cap U\) and \(x \in \upupset b\), as
  desired.
\end{proof}

\begin{proposition}\label{basis-is-dense}
  Every basis of a continuous dcpo is dense with respect to the Scott topology
  in the following (classically equivalent) ways:
  \begin{enumerate}
  \item the Scott closure of the basis is the whole dcpo;
  \item every inhabited Scott open contains a point in the basis.
  \end{enumerate}
\end{proposition}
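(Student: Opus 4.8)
The plan is to establish the two items separately and directly from the definition of a basis, without deducing one from the other. Fix a continuous dcpo $D$ together with a basis $B \subseteq D$, and write $\overline{B}$ for the Scott closure of $B$.

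For item~(1), I would take an arbitrary $x \in D$ and argue that $x \in \overline{B}$. By the definition of a basis, the set $B \cap \ddset{x}$ is directed and $\dirsup(B \cap \ddset{x}) = x$. Since $B \cap \ddset{x} \subseteq B \subseteq \overline{B}$ and any Scott closed set is closed under directed suprema, it follows that $x = \dirsup(B \cap \ddset{x}) \in \overline{B}$. As $x$ was arbitrary, $\overline{B} = D$.

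For item~(2), let $U$ be an inhabited Scott open and pick $x \in U$. Again $B \cap \ddset{x}$ is directed with supremum $x$, so $x = \dirsup(B \cap \ddset{x}) \in U$. Because $U$ is Scott open, there is some $b \in B \cap \ddset{x}$ with $b \in U$; this $b$ is a point of the basis lying in $U$, as required.

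I do not expect any real difficulty: both arguments are immediate once one unfolds the definition of a basis together with the closure and openness conditions for the Scott topology. The only subtlety worth recording is that, constructively, item~(2) is not obtained from item~(1) by passing to complements, since the complement of a Scott closed set need not be Scott open (that would require excluded middle, cf.\ Proposition~\ref{complement-of-closed-is-open-em}); proving both items directly sidesteps this, which is presumably why the statement says the two formulations are only \emph{classically} equivalent.
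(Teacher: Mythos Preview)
Your proof is correct and follows essentially the same approach as the paper: both items are established directly from the defining property that $B \cap \ddset{x}$ is directed with supremum $x$, using closure under directed suprema for~(i) and the defining property of Scott opens for~(ii). The only cosmetic difference is that for~(i) the paper works with an arbitrary Scott closed $C \supseteq B$ rather than the closure $\overline{B}$ itself, which amounts to the same thing.
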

\begin{proof}
  Suppose that \(B\) is a basis of a continuous dcpo \(D\). (1): Let \(C\) be an
  arbitrary Scott closed subset of \(D\) containing \(B\). We wish to show that
  \(D \subseteq C\), so let \(x \in D\) be arbitrary. Since \(B\) is a basis, we
  have \(x = \dirsup B \cap \ddset x\). Note that
  \(B \cap \ddset x \subseteq B \subseteq C\), so \(C\) must also contain~\(x\)
  because Scott closed subsets are closed under directed suprema.
  (2): If \(U\) is a Scott open containing a point \(x\), then from
  \(x = \dirsup B \cap \ddset x\) and the fact that \(U\) is Scott open, we see
  that \(U\) must contain an element of~\(B\).
\end{proof}
\begin{proposition}[Interpolation]\label{interpolation}
  If \(x \ll y\) are elements of a continuous dcpo \(D\), then there exists
  \(b \in D\) with \(x \ll b \ll y\).
  Moreover, if \(D\) has a basis \(B\), then there exists such an element \(b\)
  in \(B\).
\end{proposition}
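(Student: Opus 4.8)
The plan is to exploit continuity of $D$ to express $y$ as a directed supremum of elements way below it, and then to iterate this one level deeper so as to manufacture an intermediate element. Concretely, for each $z \in \ddset y$ we have, by continuity, $z = \dirsup \ddset z$, and since $\ddset z$ is directed we would like to collect all these elements together. So the first step is to consider the set $T \coloneqq \set*{w \in D \mid \exists\, z \in D,\ w \ll z \ll y}$, i.e.\ $T = \bigcup_{z \in \ddset y} \ddset z$. The key claim is that $T$ is directed and that $\dirsup T = y$; granting this, since $y \below \dirsup T$ and $y \ll y$ is false in general, I instead use $y = \dirsup \ddset y \below \dirsup T$ together with $y \ll y$\dots\ no: the clean route is that $y \below \dirsup T$ (in fact $y = \dirsup T$), so applying the definition of $x \ll y$ to the directed set $T$ yields some $w \in T$ with $x \below w$, and unpacking $w \in T$ gives $z$ with $x \below w \ll z \ll y$; then $x \ll z$ by Lemma~\ref{way-below-basics}(2) and $z \ll y$, so $b \coloneqq z$ works.

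The substantive work is therefore the claim that $T$ is directed with supremum $y$. Directedness: $T$ is inhabited because $\ddset y$ is inhabited (continuity) and each $\ddset z$ is inhabited (continuity again). For semidirectedness, given $w_1 \ll z_1 \ll y$ and $w_2 \ll z_2 \ll y$, use directedness of $\ddset y$ to find $z \in \ddset y$ with $z_1, z_2 \below z$; then $w_1 \ll z_1 \below z$ and $w_2 \ll z_2 \below z$ give $w_1, w_2 \in \ddset z$ by Lemma~\ref{way-below-basics}(3), and directedness of $\ddset z$ produces an upper bound in $\ddset z \subseteq T$. For the supremum: any $w \in T$ satisfies $w \ll z \ll y$ for some $z$, hence $w \below z \below y$, so $y$ is an upper bound of $T$. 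Conversely, if $u$ is an upper bound of $T$, then for each $z \in \ddset y$ we have $\ddset z \subseteq T$, so $z = \dirsup \ddset z \below u$; thus $u$ is an upper bound of $\ddset y$, whence $y = \dirsup \ddset y \below u$. Therefore $\dirsup T = y$.

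For the ``moreover'' part, suppose $D$ has a basis $B$. Having obtained $x \ll z \ll y$ as above, apply the definition of $z \ll y$ together with the fact that $B \cap \ddset y$ is directed with supremum $y$: since $y \below \dirsup (B \cap \ddset y)$ and $z \ll y$, there is $b \in B \cap \ddset y$ with $z \below b$. Then $x \ll z \below b$ gives $x \ll b$ by Lemma~\ref{way-below-basics}(3), and $b \ll y$ since $b \in \ddset y$, so this $b \in B$ is the desired interpolant. I expect the main obstacle to be purely organizational: keeping the two applications of continuity (one to $y$, one to each $z \in \ddset y$) properly nested and making sure each invocation of Lemma~\ref{way-below-basics} uses the correct clause; there is no genuine constructive subtlety here, since directedness of $\ddset y$ and of each $\ddset z$ is exactly what continuity hands us, and no choice principle is needed because we are only ever extracting witnesses from sets already known to be directed.
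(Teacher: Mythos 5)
Your proof is correct, and it is essentially the standard constructive argument: the paper itself omits the proof and cites \cite{deJongEscardo2021a}, where interpolation is proved by exactly this device of taking the directed set \(T = \bigcup_{z \ll y} \ddset z\), showing \(\dirsup T = y\), and applying \(x \ll y\) to it (with the basis refinement handled just as you do, via \(B \cap \ddset y\)). The only unstated step is that \(w \ll z\) implies \(w \below z\), which follows at once by applying the definition of \(\ll\) to the directed set \(\set{z}\).
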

\begin{lemma}\label{below-in-terms-of-way-below}
  For every two elements \(x\) and \(y\) of a continuous dcpo \(D\) we have
  \[
    x \below y
    \iff \forall_{z \in D}\,\pa*{z \ll x \to z \ll y}
    \iff \forall_{z \in D}\,\pa*{z \ll x \to z \below y}.
  \]
  Moreover, if \(D\) has a basis \(B\), then
  \[
    x \below y
    \iff \forall_{b \in B}\,\pa*{b \ll x \to b \ll y}
    \iff \forall_{b \in B}\,\pa*{b \ll x \to b \below y}.
  \]
\end{lemma}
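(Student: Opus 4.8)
The plan is to derive both equivalences from continuity of \(D\) together with Lemma~\ref{way-below-basics}, treating the basis version by the same pattern. I would begin with the unrestricted statement. For the left-to-right direction, suppose \(x \below y\) and let \(z \in D\) with \(z \ll x\); then \(z \ll x \below y\), so \(z \ll y\) by Lemma~\ref{way-below-basics}(3). This direction uses nothing about continuity.

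For the right-to-left direction, suppose \(\forall_{z \in D}\,\pa*{z \ll x \to z \ll y}\). Since \(D\) is continuous, \(\ddset x\) is directed with \(\dirsup \ddset x = x\). I claim \(y\) is an upper bound of \(\ddset x\): if \(z \ll x\) then \(z \ll y\) by hypothesis, and way-below implies below (instantiate the definition of \(z \ll y\) at the directed set \(\set*{y}\), whose supremum is \(y\)), so \(z \below y\). Hence \(x = \dirsup \ddset x \below y\) because a directed supremum is the least upper bound.

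The moreover part follows the same two steps but using that \(B \cap \ddset x\) is directed with supremum \(x\) rather than \(\ddset x\) itself. Left-to-right is again Lemma~\ref{way-below-basics}(3) restricted to \(b \in B\). For right-to-left, from \(\forall_{b \in B}\,\pa*{b \ll x \to b \ll y}\) we get that \(y\) bounds \(B \cap \ddset x\) above (each such \(b\) satisfies \(b \ll y\), hence \(b \below y\)), so \(x = \dirsup\pa*{B \cap \ddset x} \below y\).

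There is essentially no serious obstacle here; the only point worth stating explicitly is the small observation that \(z \ll y\) entails \(z \below y\), which is where the definition of the way-below relation (applied to the trivial directed set \(\set*{y}\)) is silently invoked. Everything else is a direct appeal to continuity and the universal property of directed suprema, and no use of excluded middle or choice is required.
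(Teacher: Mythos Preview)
Your proof is correct. The paper does not actually supply a proof of this lemma in the text; it lists it among the standard preliminaries and remarks that constructive proofs of such results can be found in~\cite{deJongEscardo2021a}. Your argument is the standard one: the forward direction is Lemma~\ref{way-below-basics}(iii), and the converse uses that \(x = \dirsup \ddset x\) (respectively \(x = \dirsup\pa*{B \cap \ddset x}\)) together with the observation that \(z \ll y\) implies \(z \below y\). Nothing is missing, and no nonconstructive principle is invoked.
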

\begin{definition}[Abstract basis and ideal completion \(\Idl(B,\prec)\)]
  An \emph{abstract basis} is a pair \((B,\prec)\) such that \(\prec\) is
  transitive and \emph{interpolative}: for every \(b \in B\), the subset
  \(\dset b \coloneqq \set{a \in B \mid a \prec b}\) is directed.
  The \emph{rounded ideal completion} \(\Idl(B,\prec)\) of an abstract basis
  \((B,\prec)\) consists of directed lower sets of \((B,\prec)\), known as
  \emph{(rounded) ideals}, ordered by subset inclusion. It is a continuous dcpo
  with basis \(\set{\dset b \mid b \in B}\) and directed suprema given by unions.
\end{definition}
\begin{lemma}\label{reflexive-algebraic}
  If the relation \(\prec\) of an abstract basis \((B,\prec)\) is reflexive,
  then \(\Idl(B,\prec)\) is algebraic and its compact elements are exactly those
  of the form \(\dset b\) for \(b \in B\).
\end{lemma}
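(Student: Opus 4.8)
The plan is to establish, in this order: (a) every basis element $\dset b$ is compact in $\Idl(B,\prec)$; (b) every ideal $I$ equals the directed union of the ideals $\dset b$ with $b$ ranging over $I$; and then to combine these two facts to identify all the compact elements and to derive algebraicity. Throughout I will use, as recorded earlier, that the order on $\Idl(B,\prec)$ is $\subseteq$ and that directed suprema there are unions.

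For (a), fix $b \in B$ and suppose $\dset b \subseteq \dirsup_{j} I_j = \bigcup_{j} I_j$ for some directed family $(I_j)_j$ of ideals. By reflexivity of $\prec$ we have $b \in \dset b$, hence $b \in I_k$ for some index $k$; since $I_k$ is a lower set, $\dset b = \set{a \in B \mid a \prec b} \subseteq I_k$. Thus $\dset b \ll \dset b$, i.e.\ $\dset b$ is compact.

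For (b), fix an ideal $I$. For each $b \in I$ we have $\dset b \subseteq I$ because $I$ is a lower set, and conversely each $a \in I$ lies in $\dset a$ by reflexivity, so $\bigcup_{b \in I} \dset b = I$. The family $\set{\dset b \mid b \in I}$ is inhabited since $I$ is, and it is semidirected: given $b, b' \in I$, directedness of $I$ yields $c \in I$ with $b \prec c$ and $b' \prec c$, and transitivity of $\prec$ gives $\dset b \subseteq \dset c$ and $\dset{b'} \subseteq \dset c$. Hence $I = \dirsup \set{\dset b \mid b \in I}$ is a directed supremum of compact elements (by (a)) below $I$.

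To finish: if $I$ is compact, then from $I \ll I = \dirsup \set{\dset b \mid b \in I}$ we obtain some $b \in I$ with $I \subseteq \dset b$; since also $\dset b \subseteq I$, we conclude $I = \dset b$, so the compact elements of $\Idl(B,\prec)$ are exactly the $\dset b$ with $b \in B$. For algebraicity, observe that $\dset b \subseteq I$ implies $b \in \dset b \subseteq I$ by reflexivity, so the set of compact elements below a given ideal $I$ is precisely $\set{\dset b \mid b \in I}$, which by (b) is directed with supremum $I$. The argument uses no case distinctions or choices, so I do not expect a genuine obstacle; the only place requiring care is this last step, where one must keep straight the directions of $\subseteq$ and of $\ll$ to see that a compact ideal is squeezed between some $\dset b$ and itself.
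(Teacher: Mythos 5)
Your proof is correct. The paper itself states Lemma~\ref{reflexive-algebraic} without proof (it is one of the standard results deferred to the cited literature), and your argument is exactly the standard one it relies on: reflexivity makes each \(\dset b\) compact and forces every compact ideal to be of this form, while writing an ideal \(I\) as the directed union \(\bigcup_{b \in I} \dset b\) of these compact elements gives algebraicity; all steps are constructive and no gaps remain.
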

\subsection{Decidability conditions}\label{sec:decidability-conditions}
Every continuous dcpo \(D\) has a basis, namely \(D\) itself. Our interest in
bases lies in the fact that we can ask a dcpo to have a basis satisfying certain
decidability conditions that we couldn't reasonably impose on the entire
dcpo. For instance, the basis \(\set{\bot,\top}\) of the Sierpi\'nski domain
\(\Sierp\) has decidable equality, but decidable equality on all of \(\Sierp\)
is equivalent to excluded middle.

The first decidability condition that we will consider is for bases \(B\) of a
pointed continuous dcpo:
\begin{equation}\label{equals-bot-decidable}\tag{\(\delta_\bot\)}
  \text{For every } b \in B, \text{ it is decidable whether } b = \bot.
\end{equation}
The second and third decidability conditions are for bases of any continuous dcpo:
\begin{gather}\label{way-below-decidable}\tag{\(\delta_{\ll}\)}
  \text{For every } a,b \in B, \text{ it is decidable whether } a \ll b. \\
  \label{below-decidable}\tag{\(\delta_{\below}\)}
  \text{For every } a,b \in B, \text{ it is decidable whether } a \below b.
\end{gather}

Observe that each of \eqref{below-decidable} and \eqref{way-below-decidable}
implies \eqref{equals-bot-decidable} for pointed dcpos, because \(\bot\) is
compact.
In general, neither of the conditions \eqref{below-decidable} and
\eqref{way-below-decidable} implies the other, at least as far as we
know. However, in some cases, for example when \(B\) is Kuratowski finite,
\eqref{way-below-decidable} is a stronger condition, because of
Lemma~\ref{below-in-terms-of-way-below}.
Moreover, if the dcpo is algebraic then conditions \eqref{below-decidable} and
\eqref{way-below-decidable} are equivalent for the unique basis of compact
elements.

Finally, we remark that many natural examples in domain theory satisfy the
decidability conditions. In particular, this holds for all examples in
Section~\ref{sec:examples}.

Again, we stress that these decidability conditions are necessarily restricted
to the basis, as the following proposition shows.

\begin{proposition}
  Let \(D\) be any pointed dcpo that is nontrivial in the sense that there
  exists \(x \in D\) with \(x \neq \bot\). If \(y = \bot\) is decidable for
  every \(y \in D\), then weak excluded middle follows.
\end{proposition}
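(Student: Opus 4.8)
The plan is to derive weak excluded middle — the statement that for every proposition $P$, $\lnot P \lor \lnot\lnot P$ holds — from the hypotheses. Recall that weak excluded middle is equivalent to De Morgan's law $\lnot(P \land Q) \to (\lnot P \lor \lnot Q)$, and also to the statement that for every proposition $P$ the truth value $\lnot P$ is decidable, i.e. $\lnot P \lor \lnot\lnot P$. I would work with the latter, most direct formulation.

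First I would fix a proposition $P$ and the witness $x \in D$ with $x \neq \bot$. The idea is to build an element of $D$ whose comparison with $\bot$ encodes $\lnot P$. Consider the subset $S \coloneqq \set{\bot} \cup \set{x \mid P}$ of $D$; more precisely, $S = \set{d \in D \mid d = \bot \lor (d = x \land P)}$. This $S$ is inhabited (by $\bot$) and semidirected: given two elements of $S$, if either equals $x$ then $P$ holds and $x$ is an upper bound of both in $S$, otherwise $\bot$ works. Since $D$ is pointed, $S$ has a supremum; call it $y \coloneqq \dirsup S$. Now I would establish two facts: (a) if $P$ holds then $y = x$, since then $x \in S$ and $x$ is the largest element of $S$; and (b) $y = \bot$ implies $\lnot P$ — indeed if $y = \bot$ and $P$ held, then by (a) we would get $x = y = \bot$, contradicting $x \neq \bot$. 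Conversely $\lnot P$ implies $S = \set{\bot}$, hence $y = \bot$. So $y = \bot \iff \lnot P$.

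Finally I apply the hypothesis: $y = \bot$ is decidable, so $y = \bot \lor \lnot(y = \bot)$. In the first case we obtain $\lnot P$; in the second case, since $\lnot P \to y = \bot$, we obtain $\lnot\lnot P$. Either way $\lnot P \lor \lnot\lnot P$ holds, which is weak excluded middle (as this holds for arbitrary $P$).

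The only delicate point is the construction of the supremum $y$ and the verification that $S$ is semidirected, which requires case-analysing on whether the elements involved are $\bot$ or $x$ — this uses only the hypotheses on $S$'s membership and the decidability is \emph{not} needed here, just that $P$ is a proposition so that $S$ is genuinely a subset. I expect the main obstacle to be presenting this cleanly rather than any real mathematical difficulty: one must be careful that $S$ as defined is a legitimate subset (which it is, being carved out by a formula) and that the equivalence $y = \bot \iff \lnot P$ is argued constructively without sneaking in a case split on $P$ itself. No interpolation, continuity, or basis machinery is needed; the argument is purely order-theoretic and uses only pointedness and the existence of suprema of semidirected sets.
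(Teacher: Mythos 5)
Your proposal is correct and follows essentially the same route as the paper: the paper's one-line proof observes that $\lnot P$ holds iff $\dirsup\set{x \mid P} = \bot$ (the supremum existing because the subset is semidirected and $D$ is pointed), which is exactly your construction with the harmless addition of $\bot$ to the set made explicit. Applying the decidability hypothesis to this supremum then yields $\lnot P \vee \lnot\lnot P$, just as you argue.
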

\begin{proof}
  For any proposition \(P\), observe that \(\lnot P\) holds if and only if
  \(\dirsup \set{x \mid P} = \bot\). Hence, if \(y = \bot\) is decidable for
  every \(y \in D\), then so is \(\lnot P\) for every proposition \(P\).
\end{proof}
Moreover, if the order relation of a dcpo is decidable, then, by antisymmetry,
the dcpo must have decidable equality, but we showed in
\cite[Corollary~39]{deJongEscardo2021b} that this implies (weak) excluded
middle, unless the dcpo is trivial.

For understanding these results, it is important to recall that the two-element
poset with \(\bot \below \top\) cannot be shown to be directed complete,
constructively.

We now show that the decidability conditions \eqref{equals-bot-decidable},
\eqref{way-below-decidable} and \eqref{below-decidable} are preserved by taking
products and exponentials (function spaces) of dcpos.

\begin{definition}[Product of dcpos \(D \times E\)]
  The \emph{product} \(D \times E\) of two dcpos \(D\) and \(E\) is given by
  their Cartesian product ordered pairwise. The supremum of a
  directed subset \(S \subseteq D \times E\) is given by the pair of suprema
  \(\dirsup \set{x \in D \mid \exists_{y \in E}\, (x,y) \in S}\) and
  \(\dirsup \set{y \in D \mid \exists_{x \in D}\, (x,y) \in S}\).
\end{definition}
\begin{proposition}
  If \(D\) and \(E\) are continuous dcpos with bases \(B_D\) and \(B_E\), then
  \(B_D \times B_E\) is a basis for the product \(D \times E\). Also, if
  \(B_D\) and \(B_E\) both satisfy \eqref{equals-bot-decidable}, then so does
  \(B_D \times B_E\), and similarly for \eqref{way-below-decidable} and
  \eqref{below-decidable}.
\end{proposition}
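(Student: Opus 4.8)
The plan is to reduce everything to an explicit description of the way-below relation on $D \times E$, and then to read off continuity, the basis, and the decidability conditions. The key lemma I would establish is that for all $(x,y),(a,b) \in D \times E$,
\[
  (a,b) \ll (x,y) \quad\Longleftrightarrow\quad a \ll x \text{ and } b \ll y,
\]
equivalently $\ddset(x,y) = \ddset x \times \ddset y$. Throughout I use that $\bot$ and the order on $D \times E$ are componentwise by definition, and that so are directed suprema, once one checks that the image of a directed subset of $D \times E$ under a projection is directed in the corresponding factor.

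For the right-to-left direction of the lemma, suppose $a \ll x$ and $b \ll y$ and let $S \subseteq D \times E$ be directed with $(x,y) \below \dirsup S$. Denoting by $\pi_D(S)$ and $\pi_E(S)$ the (directed) projections of $S$, we get $x \below \dirsup \pi_D(S)$ and $y \below \dirsup \pi_E(S)$, so $a \ll x$ produces some $(u,v) \in S$ with $a \below u$ and $b \ll y$ produces some $(u',v') \in S$ with $b \below v'$; directedness of $S$ yields $(w,w') \in S$ above both, and then $(a,b) \below (w,w') \in S$. For left-to-right, assume $(a,b) \ll (x,y)$ and let $S \subseteq D$ be directed with $x \below \dirsup S$; since $E$ is continuous, $\ddset y$ is directed (hence inhabited) with supremum $y$, so $S \times \ddset y$ is directed with $\dirsup(S \times \ddset y) = (\dirsup S, y) \aboveorder (x,y)$, giving some $(s,t) \in S \times \ddset y$ with $(a,b) \below (s,t)$ and thus $a \below s \in S$; this shows $a \ll x$, and symmetrically $b \ll y$.

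Granting the lemma, $\ddset(x,y) = \ddset x \times \ddset y$ is a product of directed subsets ordered pairwise, hence directed, with supremum $(\dirsup \ddset x, \dirsup \ddset y) = (x,y)$, so $D \times E$ is continuous; the same computation gives $(B_D \times B_E) \cap \ddset(x,y) = (B_D \cap \ddset x) \times (B_E \cap \ddset y)$, again directed with supremum $(x,y)$, so $B_D \times B_E$ is a basis. The decidability conditions transfer because a conjunction of two decidable propositions is decidable: for \eqref{equals-bot-decidable}, $\bot_{D \times E} = (\bot_D,\bot_E)$, so $(b_D,b_E) = \bot$ iff $b_D = \bot_D$ and $b_E = \bot_E$; for \eqref{below-decidable}, $(a_D,a_E) \below (b_D,b_E)$ iff $a_D \below b_D$ and $a_E \below b_E$; and for \eqref{way-below-decidable}, one uses the lemma in the same way. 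The only step that takes real work is the way-below lemma --- in particular, manufacturing directed subsets of the product out of directed subsets of one factor --- and everything else is bookkeeping.
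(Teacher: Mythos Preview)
Your argument is correct and follows the standard route: characterise the way-below relation on the product componentwise, then read off continuity, the basis property, and the decidability conditions. The paper itself states this proposition without proof, so there is nothing to compare against; what you have written is exactly the expected verification.

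One small simplification: in the left-to-right half of the key lemma you invoke continuity of \(E\) to obtain the directed set \(\ddset y\), but you can just use the singleton \(\set{y}\) instead. If \(S \subseteq D\) is directed with \(x \below \dirsup S\), then \(S \times \set{y}\) is already directed in \(D \times E\) with supremum \((\dirsup S, y) \aboveorder (x,y)\), and the rest of your argument goes through unchanged. So the way-below characterisation \((a,b) \ll (x,y) \iff a \ll x \text{ and } b \ll y\) holds in any product of dcpos, not only continuous ones; continuity of \(D\) and \(E\) is needed only later, to ensure that \(\ddset x\) and \(\ddset y\) (and their intersections with the bases) are directed with the right suprema.
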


\begin{definition}[Scott continuity]
  A function between dcpos is \emph{Scott continuous} if it preserves directed
  suprema.
\end{definition}
\begin{definition}[Exponential of dcpos \(E^D\)]
  The \emph{exponential} \(E^D\) of two dcpos \(D\) and \(E\) is given by the
  set of Scott continuous functions from \(D\) to \(E\) ordered pointwise, i.e.\
  \(f \below g\) if \(\forall_{x \in D}\,f(x) \below g(x)\) for
  \(f,g\colon D \to E\). Suprema of directed subsets are also given pointwise.
\end{definition}

We use the name ``exponential'' for \(E^D\), because this construction (together
with the product) witnesses that the category of dcpos is cartesian closed.
In the (bounded complete) algebraic case, a basis for the exponential can be
constructed using step functions which we recall now.

\begin{definition}[Step function]
  Given an element \(x\) of a dcpo \(D\) and an element \(y\) of a pointed dcpo
  \(E\), the \emph{single-step function}
  \(\steppa{x \To y} \colon D \to E\) is defined as
  \(\steppa{x \To y}(d) \coloneqq \dirsup \set{y \mid x \below d}\).
  A \emph{step-function} is the supremum of a Kuratowski finite
  (recall~Example~\ref{ex:powerset}) subset of single-step functions.
\end{definition}

\begin{lemma}\label{step-function-compact}
  If \(x\) is a compact element of \(D\) and \(y\) is any element of a pointed
  dcpo \(E\), then the single-step function
  \(\steppa{x \To y} \colon {D \to E}\) is Scott continuous. If \(y\) is also
  compact, then \(\steppa{x \To y}\) is a compact element of the exponential
  \(E^D\).
\end{lemma}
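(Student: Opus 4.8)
The plan is to prove the two assertions in turn, the second resting on the first. First I would note that $\steppa{x \To y}$ is well defined on any pointed dcpo $E$: for each $d \in D$ the subset $\set{y \mid x \below d}$ is a subsingleton, hence semidirected, and a pointed dcpo has suprema of all semidirected subsets; concretely $\steppa{x \To y}(d)$ equals $y$ when $x \below d$ and $\bot$ otherwise. Since $x \below d_1$ and $d_1 \below d_2$ imply $x \below d_2$, the subset $\set{y \mid x \below d_1}$ is contained in $\set{y \mid x \below d_2}$, so $\steppa{x \To y}$ is monotone. Note that this part does not use compactness of $x$.

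For Scott continuity, let $S \subseteq D$ be directed. By monotonicity the image $\set{\steppa{x \To y}(d) \mid d \in S}$ is directed, so $\dirsup_{d \in S}\steppa{x \To y}(d)$ exists, and it is below $\steppa{x \To y}\pa*{\dirsup S}$ because $d \below \dirsup S$ for every $d \in S$. For the reverse inequality it suffices, by the definition of $\steppa{x \To y}\pa*{\dirsup S}$ as a supremum, to show $y \below \dirsup_{d \in S}\steppa{x \To y}(d)$ under the assumption $x \below \dirsup S$. This is where compactness of $x$ enters: from $x \ll x$ and $x \below \dirsup S$ we get $x \ll \dirsup S$ by Lemma~\ref{way-below-basics}(3), so there is some $d \in S$ with $x \below d$; but then $y \in \set{y \mid x \below d}$, so $y \below \steppa{x \To y}(d) \below \dirsup_{d \in S}\steppa{x \To y}(d)$, as desired.

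Now suppose $y$ is also compact; I would show $\steppa{x \To y}$ is compact in $E^D$. Let $\mathcal{F} \subseteq E^D$ be directed with $\steppa{x \To y} \below \dirsup \mathcal{F}$. Since directed suprema in $E^D$ are computed pointwise, evaluating this inequality at the point $x$ and using $\steppa{x \To y}(x) = y$ (as $x \below x$) yields $y \below \dirsup_{f \in \mathcal{F}} f(x)$, a directed supremum in $E$. By compactness of $y$ there is $f \in \mathcal{F}$ with $y \below f(x)$. It remains to check $\steppa{x \To y} \below f$: for an arbitrary $d \in D$, whenever $x \below d$ we have $y \below f(x) \below f(d)$ using that $f$, being Scott continuous, is monotone; hence $\steppa{x \To y}(d) = \dirsup \set{y \mid x \below d} \below f(d)$. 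Since $d$ was arbitrary, $\steppa{x \To y} \below f$.

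The only genuinely delicate point is the step in the second paragraph that turns $x \below \dirsup S$ into the existence of a member of $S$ above $x$; this is exactly where we must use that $x$ is compact rather than merely an element of a continuous dcpo, and it is driven entirely by Lemma~\ref{way-below-basics}(3) applied to $x \ll x$. The remaining steps are routine bookkeeping about suprema of subsingletons in pointed dcpos and pointwise suprema in exponentials, and none of them requires excluded middle or choice.
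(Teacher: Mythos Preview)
The paper states this lemma without proof, so there is no argument to compare against; your proof is correct and is the standard one. One small remark: the final verification that \(\steppa{x \To y} \below f\) given \(y \below f(x)\) is precisely Lemma~\ref{step-function-below}, so you could cite that instead of redoing the check, but your direct argument is fine.
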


\begin{lemma}\label{step-function-below}
  For every element \(x\) of a dcpo \(D\), element \(y\) of a pointed dcpo \(E\)
  and Scott continuous function \(f \colon D \to E\), we have
  \(\steppa{x \To y} \below f\) if and only if \(y \below f(x)\).
\end{lemma}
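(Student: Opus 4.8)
The statement concerns the pointwise order on the exponential $E^D$, so the plan is to begin by unfolding it: $\steppa{x \To y} \below f$ holds precisely when $\steppa{x \To y}(d) \below f(d)$ for every $d \in D$. Recall that by definition $\steppa{x \To y}(d) = \dirsup\set{y \mid x \below d}$, and this supremum exists because $E$ is pointed and the subset $\set{y \mid x \below d}$, being a subsingleton, is semidirected. So everything reduces to understanding this supremum and comparing it with $f(d)$.

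For the left-to-right implication I would simply instantiate the pointwise inequality at $d = x$. Since $x \below x$ by reflexivity, the subset $\set{y \mid x \below x}$ equals $\set{y}$, so $\steppa{x \To y}(x) = \dirsup\set{y} = y$, and the assumed inequality $\steppa{x \To y}(x) \below f(x)$ becomes exactly $y \below f(x)$.

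For the converse, assume $y \below f(x)$ and fix an arbitrary $d \in D$; the goal is $\dirsup\set{y \mid x \below d} \below f(d)$. Rather than case-splitting on whether $x \below d$ holds — which we cannot do constructively — I would show directly that $f(d)$ is an upper bound of $\set{y \mid x \below d}$, so that it dominates the supremum. The only possible element of this subset is $y$, and it lies in the subset only when $x \below d$; in that case monotonicity of $f$ gives $f(x) \below f(d)$, and transitivity of $\below$ together with the hypothesis $y \below f(x)$ yields $y \below f(d)$, as required.

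The one auxiliary fact used is that Scott continuous functions are monotone, which I would record in passing: if $x \below d$ then $\set{x,d}$ is directed with supremum $d$, so $f(d) = f(\dirsup\set{x,d}) = \dirsup\set{f(x),f(d)}$, whence $f(x) \below f(d)$. There is no substantial obstacle in this lemma; the only point requiring genuine care is the constructive handling of the subsingleton $\set{y \mid x \below d}$ in the converse direction, where one must bound its supremum from above \emph{without} deciding the proposition $x \below d$.
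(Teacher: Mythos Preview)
Your proof is correct. The paper states this lemma without proof, treating it as a standard domain-theoretic fact, so there is no paper argument to compare against; your unfolding of the pointwise order together with the constructive handling of the subsingleton supremum (bounding it above by \(f(d)\) without deciding \(x \below d\)) is exactly the natural argument one would expect.
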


\begin{lemma}\label{compact-closed-under-finite-sups}
  The compact elements of a dcpo are closed under existing Kuratowski finite
  suprema.
\end{lemma}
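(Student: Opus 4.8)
The plan is to argue directly from the definition of compactness (an element is compact precisely when it is way below itself), separating off the empty supremum. First I would treat the empty case: the supremum of the empty family, if it exists, is by definition the least element \(\bot\) of the dcpo. To see that \(\bot\) is compact, let \(S\) be a directed subset with \(\bot \below \dirsup S\); since \(S\) is inhabited we may pick any \(s \in S\), and then \(\bot \below s\) trivially, so \(\bot \ll \bot\).

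For the main case, let \(x_1,\dots,x_n\) be compact elements with \(n \geq 1\) whose supremum \(s \coloneqq \dirsup\set{x_1,\dots,x_n}\) exists; I claim \(s \ll s\). So let \(S\) be directed with \(s \below \dirsup S\). For each \(i\) we have \(x_i \below s \below \dirsup S\), so compactness of \(x_i\) yields some \(s_i \in S\) with \(x_i \below s_i\). A straightforward induction on \(n\), using semidirectedness of \(S\) at each step (and inhabitedness to start), then produces a single \(t \in S\) with \(s_i \below t\) for all \(i\). Hence \(x_i \below t\) for every \(i\), so \(t\) is an upper bound of \(\set{x_1,\dots,x_n}\), whence \(s \below t\). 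This exhibits \(s\) as way below itself, so \(s\) is compact.

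To match the statement for arbitrary finite suprema, understood in the Kuratowski sense of Example~\ref{ex:powerset}: if \(F\) is a Kuratowski finite set of compact elements whose supremum exists, choose a surjection \(e \colon \set{0,\dots,n-1} \twoheadrightarrow F\); then \(\dirsup F = \dirsup\set{e(0),\dots,e(n-1)}\), each \(e(i)\) is compact, and the previous paragraph applies, with the empty case absorbed when \(n = 0\).

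I do not anticipate a genuine obstacle here. The one point requiring constructive care is that one should \emph{not} try to reduce to the binary case by folding iterated suprema, since a sub-supremum of \(\set{x_1,\dots,x_n}\) need not exist even when \(\dirsup\set{x_1,\dots,x_n}\) does; this is why the argument above works with all of \(x_1,\dots,x_n\) at once. The appeal to Kuratowski finiteness is exactly what licenses the finite enumeration driving the induction, and the empty case is the only place where the possible absence of a least element is relevant.
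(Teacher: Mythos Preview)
The paper states this lemma without proof, so there is nothing to compare against; your argument is correct and is precisely the standard direct verification one would expect. Your remark about not folding iterated binary suprema is a nice point of constructive hygiene, though in the paper's sole application (Proposition~\ref{dec-closed-under-exponentials}) the ambient dcpo is bounded complete, so the issue would not arise there.
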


\begin{definition}[Bounded completeness]
  A subset \(S\) of a poset \((X,\below)\) is \emph{bounded} if there exists
  \(x \in X\) such that \(s \below x\) for every \(s \in S\).
  A poset \((X,\below)\) is \emph{bounded complete} if every bounded
  subset \(S \subseteq X\) has a supremum \(\dirsup S\) in \(X\).
\end{definition}

\begin{proposition}\label{dec-closed-under-exponentials}
  If \(D\) is an inhabited algebraic dcpo with basis of compact elements \(B_D\)
  and \(E\) is a pointed bounded complete algebraic dcpo with basis of compact
  elements \(B_E\), then
  \begin{align*}
    B \coloneqq \Big{\{}\dirsup S \mid\,\, &S \text{ is a bounded Kuratowski finite
    subset of single-step functions} \\
    &\text{of the form } \steppa{a \To b} \text { with } a \in B_D
      \text{ and } b \in B_E\Big{\}}.
  \end{align*}
  is the basis of compact elements for the algebraic exponential \(E^D\).
  Moreover, if \(B_E\) satisfies \eqref{equals-bot-decidable}, then so
  does~\(B\). Finally, if \(B_D\) and \(B_E\) both satisfy
  \eqref{below-decidable} \textup{(}or equivalently,
  \eqref{way-below-decidable}\textup{)}, then \(B\) satisfies
  \eqref{below-decidable} and \eqref{way-below-decidable} too.
\end{proposition}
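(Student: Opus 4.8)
The plan is to prove first that $B$ is a basis for $E^D$ consisting of compact elements; the characterisation of algebraic dcpos recalled earlier (a dcpo is algebraic exactly when it has a basis of compact elements, and such a basis is necessarily the set of all compact elements) then yields at once that $E^D$ is algebraic with $B$ as its basis of compact elements. The two decidability transfers are then handled by direct computation with single-step functions. For the first claim I would argue in the usual two stages. \emph{Every member of $B$ is compact:} each $a \in B_D$ and $b \in B_E$ is compact, so every single-step function $\steppa{a \To b}$ is a compact element of $E^D$ by Lemma~\ref{step-function-compact}; and a bounded Kuratowski finite family of such functions has a supremum in $E^D$, computed pointwise using bounded completeness of $E$, which is again compact by Lemma~\ref{compact-closed-under-finite-sups}. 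In particular $\bot = \dirsup\emptyset$ lies in $B$.

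\emph{$B$ is a basis:} fix $f \in E^D$; I would show that $B \cap \ddset f$ is directed with supremum $f$. Since members of $B$ are compact, $b \in B$ lies in $\ddset f$ precisely when $b \below f$ (Lemma~\ref{way-below-basics}). For directedness: $\bot$ is in the set, and if $s_1, s_2 \in B$ with $s_1, s_2 \below f$, then the union of the two bounded Kuratowski finite families of single-step functions presenting them is again bounded by $f$ and Kuratowski finite, so its supremum is a member of $B \cap \ddset f$ lying above $s_1$ and $s_2$. For the supremum: $\dirsup(B \cap \ddset f) \below f$ is immediate, and for the reverse inequality — since suprema in $E^D$ are pointwise and $E$ is algebraic — it suffices to produce, for every $x \in D$ and every compact $b \below f(x)$, some $s \in B \cap \ddset f$ with $b \below s(x)$. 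By algebraicity of $D$ and Scott continuity of $f$ we have $f(x) = \dirsup\{f(a) \mid a \in B_D,\ a \below x\}$, a directed supremum, so compactness of $b$ gives $a \in B_D$ with $a \below x$ and $b \below f(a)$; then $\steppa{a \To b} \in B$, Lemma~\ref{step-function-below} gives $\steppa{a \To b} \below f$, and $\steppa{a \To b}(x) = b$ because $a \below x$. This completes the proof that $E^D$ is algebraic with basis of compact elements $B$.

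For \eqref{equals-bot-decidable}, given $s = \dirsup\{\steppa{a_i \To b_i} \mid i < n\} \in B$, observe that $\steppa{a \To b} = \bot$ iff $b = \bot$ (evaluate at $a$) and that a supremum of single-step functions is $\bot$ iff each of them is; hence $s = \bot$ iff $b_i = \bot$ for all $i < n$, a finite conjunction of propositions each decidable by \eqref{equals-bot-decidable} for $B_E$. For \eqref{below-decidable}, given $s = \dirsup\{\steppa{a_i \To b_i} \mid i < n\}$ and $t = \dirsup\{\steppa{c_k \To d_k} \mid k < m\}$ in $B$, we have $s \below t$ iff $\steppa{a_i \To b_i} \below t$ for every $i$, that is (Lemma~\ref{step-function-below}) iff $b_i \below t(a_i)$ for every $i$; and $t(a_i) = \dirsup\{\steppa{c_k \To d_k}(a_i) \mid k < m\}$ with each term equal to $d_k$ or $\bot$ according to whether $c_k \below a_i$ holds, which is decidable by \eqref{below-decidable} for $B_D$. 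Thus $t(a_i)$ is a bounded Kuratowski finite supremum of compact elements of $E$, hence lies in $B_E$, and $b_i \below t(a_i)$ is decidable by \eqref{below-decidable} for $B_E$; a finite conjunction then decides $s \below t$. Finally, since $E^D$ is algebraic with basis of compact elements $B$, the conditions \eqref{below-decidable} and \eqref{way-below-decidable} agree on $B$, so \eqref{way-below-decidable} follows too.

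I expect the only genuinely non-routine point to be the assertion, used in both stages above, that a bounded Kuratowski finite family of Scott continuous functions $D \to E$ has a supremum in $E^D$ given pointwise: one must check that the pointwise supremum is Scott continuous, which is exactly where bounded completeness of $E$ (together with an interchange of suprema over a bounded index set) enters, and it also underlies the pointwise evaluation of step functions used throughout. Everything else reduces to bookkeeping with single-step functions via Lemmas~\ref{step-function-compact}--\ref{compact-closed-under-finite-sups}, together with the stability of Kuratowski finiteness under finite unions and under decidable subsets.
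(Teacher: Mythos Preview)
Your proposal is correct and follows essentially the same route as the paper's proof: compactness of step functions via Lemmas~\ref{step-function-compact} and~\ref{compact-closed-under-finite-sups}, directedness of $B \cap \ddset f$ by taking unions of the presenting families, the supremum argument by approximating $f(x)$ through compact $a \below x$ and compact $b \below f(a)$, and the two decidability transfers by reducing to $b_i = \bot$ and to $b_i \below t(a_i)$ with $t(a_i)$ computed as a finite bounded join in $B_E$. The one cosmetic difference is that you obtain inhabitedness of $B \cap \ddset f$ from $\bot = \dirsup\emptyset \in B$, whereas the paper invokes the hypothesis that $D$ is inhabited to exhibit a single-step function $\steppa{b \To \bot}$ below $f$.
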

\begin{proof}
  Firstly, notice that \(B\) is well-defined as suprema are given pointwise and
  \(E\) is bounded complete. Secondly, \(B\) consists of compact elements by
  Lemmas~\ref{step-function-compact}~and~\ref{compact-closed-under-finite-sups}.
  Next, we show that \(B\) is indeed a basis for~\(E^D\). So suppose that
  \(f \colon D \to E\) is Scott continuous. We show that
  \(B \cap \ddset f = B \cap \dset f\) is directed. Observe that this set is
  indeed semidirected: if \(S\) and \(T\) are Kuratowski finite subsets of
  single-step functions bounded by \(f\), then so is \(S \cup T\) and its
  supremum is above those of \(S\) and \(T\). Moreover, the set is inhabited,
  because \(D\) is inhabited, so there must exist some \(b \in B_D\) and then
  \(\steppa{b \To \bot}\) is a single-step function way below \(f\). Hence,
  \(B \cap \dset f\) is directed, as desired.
  Next, we prove that \(f\) equals the supremum of \(B \cap \dset f\), which
  exists because \(B \cap \dset f\) is bounded by \(f\).
  So let \(x \in D\) be arbitrary. We must show that
  \(f(x) \below \pa*{\dirsup\pa*{B \cap \dset f}}(x)\). Using continuity of
  \(f\), it suffices to prove that
  \(f(a) \below \pa*{\dirsup\pa*{B \cap \dset f}}(x)\) for every \(a \in B_D\)
  with \(a \below x\). So fix \(a \in B_D\) with \(a \below x\). Since \(B_E\)
  is a basis for \(E\), it is enough to prove that
  \(b \below \pa*{\dirsup\pa*{B \cap \dset f}}(x)\) for every \(b \in B_E\) with
  \(b \below f(a)\). But if we have \(b \in B_E\) with \(b \below f(a)\), then
  \(\steppa{a \To b} \in B \cap \dset f\) by Lemma~\ref{step-function-below}, so
  \(b = \steppa{a \To b}(x) \below \pa*{\dirsup\pa*{B \cap \dset f}}(x)\), as
  desired.

  Now suppose that \(B_E\) satisfies \eqref{equals-bot-decidable} and let
  \(\dirsup S \in B\). Notice that the least element of \(E^D\) is given by
  \(x \mapsto \bot\). We wish to show that it is decidable whether
  \(\dirsup S = \pa*{x \mapsto \bot}\). Since \(S\) is Kuratowski finite,
  it is enough to show that \(\steppa{a \To b} = \pa*{x \mapsto \bot}\) is
  decidable for every \(\steppa{a \To b} \in S\). By
  Lemma~\ref{step-function-below}, this inequality reduces to \(b = \bot\),
  which is indeed decidable as \(B_E\) is assumed to satisfy
  \eqref{equals-bot-decidable}.

  Finally, assume that both \(B_D\) and \(B_E\) satisfy \eqref{below-decidable}
  and suppose that \(\dirsup S, \dirsup T \in B\). We must prove that
  \(\dirsup S \below \dirsup T\) is decidable. Since \(S\) is Kuratowski finite,
  it is enough to prove that \(\steppa{a \To b} \below \dirsup T\) is decidable
  for every \(\steppa{a \To b} \in S\). By Lemma~\ref{step-function-below}, this
  inequality reduces to \(b \below \pa*{\dirsup T}(a)\). Write \(T\) as a
  Kuratowski finite set of single-step functions:
  \(T = \set{\steppa{a_1 \To b_1},\steppa{a_2 \To b_2},\dots,\steppa{a_n \To
      b_n}}\). Then
  \(\pa*{\dirsup T}(a) = \dirsup\set{\chi_1(a),\chi_2(a),\dots,\chi_n(a)}\)
  where
  \[
    \chi_i(a) \coloneqq
    \begin{cases}
      b_i &\text{if } a_i \below a; \\
      \bot &\text{else};
    \end{cases}
  \]
  using that \(B_D\) satisfies \eqref{below-decidable}. Hence, since \(B_E\) is
  closed under finite bounded suprema by
  Lemma~\ref{compact-closed-under-finite-sups}, we see that
  \(\pa*{\dirsup T}(a) \in B_E\). But then \(b \below \pa*{\dirsup T}(a)\) is
  decidable because \(B_E\) satisfies \eqref{below-decidable} by assumption.
\end{proof}

\section{The intrinsic apartness}\label{sec:intrinsic-apartness}
\begin{definition}[Specialization preorder \(\leq\) and \(\posnotbelow\)]\label{def:specialization}
  The \emph{specialization preorder} on a topological space \(X\) is the
  preorder \({\leq}\) on \(X\) given by putting \(x \leq y\) if every open
  neighbourhood of \(x\) is an open neighbourhood of \(y\). Given \(x,y\in X\),
  we write \(x \posnotbelow y\) if there exists an open neighbourhood of \(x\)
  that does not contain \(y\).
\end{definition}

Observe that \(x \posnotbelow y\) is classically equivalent to \(x \nleq y\),
the logical negation of \(x \leq y\). We also write \(x \not\below y\) for the
logical negation of \(x \below y\), where \({\below}\) is the partial order on a
dcpo.

\begin{definition}[Apartness]\label{def:apartness}
  An \emph{apartness} on a set \(X\) is a binary relation \({\apart}\) on \(X\)
  satisfying
  \begin{enumerate}
  \item \emph{irreflexivity}: \(x \apart x\) is false for every \(x \in X\);
  \item \emph{symmetry}: if \(x \apart y\), then \(y \apart x\) for every
    \(x,y \in X\).
  \end{enumerate}
  If \(x \apart y\) holds, then \(x\) and \(y\) are said to be \emph{apart}.
\end{definition}
Notice that we do not require cotransitivity or tightness, cf.\
Warning~\ref{warning} on~page~\pageref{warning}. Also note that irreflexivity
implies that if \(x \apart y\), then \(x \neq y\), so \(\apart\) is a
strengthening of inequality.

\begin{definition}[Intrinsic apartness \({\apart}\)]\label{def:intrinsic-apartness}
  Given two points \(x\) and \(y\) of a topological space \(X\), we say that
  \(x\) and \(y\) are \emph{intrinsically apart}, written \(x \apart y\), if
  \(x \posnotbelow y\) or
  \(y \posnotbelow x\).
  Thus, \(x\) is intrinsically apart from \(y\) if there is a open neighbourhood
  of \(x\) that does not contain \(y\) or vice versa.
  It is clear that the relation \({\apart}\) is an apartness in the sense of
  Definition~\ref{def:apartness}.
\end{definition}

With excluded middle, one can show that the specialization preorder for the
Scott topology on a dcpo coincides with the partial order of the dcpo. In
particular, the specialization preorder is in fact a partial
order. Constructively, we still have the following result.

\begin{lemma}\label{order-specialization}
  If \(x \below y\) in a dcpo \(D\), then \(x \leq y\), where \({\leq}\) is the
  specialization order of the Scott topology on \(D\). If \(D\) is continuous,
  then the converse holds too, so \({\below}\) and \({\leq}\) coincide in that
  case.
\end{lemma}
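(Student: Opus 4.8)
The plan is to prove the two implications separately. For the first implication, assume $x \below y$ and let $U$ be any Scott open neighbourhood of $x$; since Scott opens are upper sets and $x \below y$, we immediately get $y \in U$, so every open neighbourhood of $x$ is one of $y$, i.e.\ $x \leq y$ in the specialization order. This direction uses nothing beyond the definition of Scott open and holds for an arbitrary dcpo.

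For the converse, assume $D$ is continuous and that $x \leq y$ in the specialization order; we must show $x \below y$. The plan is to invoke Lemma~\ref{below-in-terms-of-way-below}, so it suffices to show that $z \ll x$ implies $z \ll y$ for every $z \in D$. So fix $z$ with $z \ll x$. By Example~\ref{ex:Scott-opens}, since $D$ is continuous, the set $\upupset z = \set{w \in D \mid z \ll w}$ is Scott open, and it contains $x$ because $z \ll x$. Since $x \leq y$, this open neighbourhood of $x$ is also a neighbourhood of $y$, so $y \in \upupset z$, that is $z \ll y$, as required. Combining the two implications gives that $\below$ and $\leq$ coincide when $D$ is continuous.

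I do not anticipate a serious obstacle here: both directions are short once one has the right Scott open to test against, and the crucial observation—that $\upupset z$ is Scott open in a continuous dcpo—is already recorded in Example~\ref{ex:Scott-opens} (itself relying on interpolation, Proposition~\ref{interpolation}). The only point requiring a little care constructively is that we characterise $\below$ via the way-below relation rather than trying to separate $x$ from $y$ by a complement of a Scott closed set, since complements of Scott closed sets need not be Scott open (Lemma~\ref{complement-of-open-is-closed}); using Lemma~\ref{below-in-terms-of-way-below} sidesteps this entirely. It is also worth noting explicitly that the argument never appeals to excluded middle, so the stated coincidence is fully constructive for continuous dcpos.
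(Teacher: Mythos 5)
Your proof is correct and follows the same route as the paper: the first direction is immediate from Scott opens being upper sets, and the converse uses that $\upupset z$ is Scott open (Example~\ref{ex:Scott-opens}) together with Lemma~\ref{below-in-terms-of-way-below}. No issues.
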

\begin{proof}
  The first claim follows because Scott opens are upper sets. Now assume that
  \(D\) is continuous and that \(x \leq y\). Then
  \(\forall_{z \in X}\,\pa*{x\in\upupset z \to y \in\upupset z}\) by
  Example~\ref{ex:Scott-opens} and hence, \(x \below y\) by
  Lemma~\ref{below-in-terms-of-way-below}.
\end{proof}

\begin{lemma}\label{posnotbelow-criterion}
  For a continuous dcpo \(D\) we have \(x \posnotbelow y\) if and only if there
  exists \(b \in D\) such that \(b \ll x\), but \(b \not\below y\).
  Moreover, if \(D\) has a basis \(B\), then there exists such an element \(b\)
  in \(B\).
\end{lemma}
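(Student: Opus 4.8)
The plan is to prove the biconditional by establishing each implication directly, using Example~\ref{ex:Scott-opens} to produce a witnessing Scott open in one direction and continuity together with Scott-openness to extract a witnessing (basis) element in the other.

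First I would handle the ``if'' direction. Given $b \in D$ with $b \ll x$ and $b \not\below y$, continuity of $D$ makes $\upupset b$ a Scott open set (Example~\ref{ex:Scott-opens}), and $x \in \upupset b$ since $b \ll x$. To see that $y \notin \upupset b$, note that $b \ll y$ would give $b \below y$ by instantiating the definition of way-below at the one-element directed set $\set{y}$, whose supremum is $y$; this would contradict $b \not\below y$. Hence $\upupset b$ is a Scott open neighbourhood of $x$ not containing $y$, which witnesses $x \posnotbelow y$.

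Next I would treat the ``only if'' direction. Suppose $x \posnotbelow y$, witnessed by a Scott open $U$ with $x \in U$ and $y \notin U$. Continuity gives that $\ddset x$ is directed with $\dirsup \ddset x = x \in U$, so by Scott-openness of $U$ there is some $b \in \ddset x$ with $b \in U$; thus $b \ll x$. Since $U$ is an upper set and $y \notin U$, we must have $b \not\below y$, as required. For the final ``moreover'' claim, the converse direction is immediate because $B \subseteq D$, and the forward direction is obtained by the same argument applied to the directed set $B \cap \ddset x$, whose supremum is $x$ by the definition of a basis: Scott-openness of $U$ then yields $b \in B \cap \ddset x$ with $b \in U$, and hence $b \in B$ with $b \ll x$ and $b \not\below y$.

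I do not expect a genuine obstacle here; the only point needing a little care is the implication $b \ll y \Rightarrow b \below y$ used in the ``if'' direction, which is the small observation that the way-below relation refines the order, proved by testing the definition of $\ll$ against the directed set $\set{y}$.
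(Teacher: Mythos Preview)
Your proof is correct and follows essentially the same approach as the paper. The only cosmetic difference is that the paper invokes Lemma~\ref{basis-basis} to obtain \(b \in B\) with \(x \in \upupset b \subseteq U\), whereas you unpack this directly from the definition of a basis (applying Scott-openness of \(U\) to the directed set \(B \cap \ddset x\)); your version has the minor advantage of making \(b \in U\) explicit, which is what drives the upper-set argument for \(b \not\below y\).
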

\begin{proof}
  Suppose that \(B\) (which may be all of \(D\)) is a basis for \(D\). If we
  have \(b \in B\) with \(b \ll x\) and \(b \not\below y\), then \(\upupset b\)
  is a Scott open containing \(x\), but not \(y\), so \(x \posnotbelow y\).
  Conversely, if there exists a Scott open \(U\) with \(x \in U\) and
  \(y \not\in U\), then by Lemma~\ref{basis-basis}, there exists \(b \in B\)
  such that \(x \in \upupset b \subseteq U\). Hence, \(b \ll x\), but
  \(b \not\below y\), for if \(b \below y\), then \(y \in U\) as \(U\) is an
  upper set, contradicting our assumption.
\end{proof}
The condition in Lemma~\ref{posnotbelow-criterion} appears in a remark right
after \cite[Definition~I-1.6]{GierzEtAl2003}, as a classically equivalent
reading of \(x \not\below y\).

\begin{example}
  Consider the powerset \(\powerset{X}\) of a set \(X\) as a pointed algebraic
  dcpo. Using Lemma~\ref{posnotbelow-criterion}, we see that a subset
  \(A \in \powerset{X}\) is intrinsically apart from the empty set if and only
  if \(A\) is inhabited. More generally, for \(A,B \in \powerset{X}\), we have
  \(A \posnotbelow B\) if and only if \(B \setminus A\) is inhabited.
\end{example}

\begin{proposition}\label{posnotbelow-notbelow}\hfill
  \begin{enumerate}
  \item\label{posnotbelow-implies-notbelow} For any elements \(x\) and \(y\) of
    a dcpo \(D\), we have that \(x \posnotbelow y\) implies \(x \not\below y\).
  \item The converse of~\eqref{posnotbelow-implies-notbelow} holds if and only
    if excluded middle holds.
    In particular, if the converse of~\eqref{posnotbelow-implies-notbelow} holds
    for all elements of the Sierpi\'nski domain \(\Sierp\), then excluded middle
    follows.
  \item\label{apart-implies-neq} For any elements \(x\) and \(y\) of a dcpo \(D\), we have that
    \(x \apart y\) implies \(x \neq y\).
  \item The converse of~\eqref{apart-implies-neq} holds if and only if excluded
    middle holds.
    In particular, if the converse of~\eqref{apart-implies-neq} holds for all
    elements of the Sierpi\'nski domain \(\Sierp\), then excluded middle
    follows.
  \item If \(c\) is a compact element of a dcpo \(D\) and \(x \in D\), then
    \(c \not\below x\) implies \(c \posnotbelow x\), without the need to assume
    excluded middle.
  \end{enumerate}
\end{proposition}
\begin{proof}
  (1): This is just the contrapositive of the first claim in
  Lemma~\ref{order-specialization}.
  (2): It is straightforward to prove the implication if excluded middle holds.
  For the converse, assume that \(x \not\below y\) implies \(x \posnotbelow y\)
  for every \(x,y \in \Sierp\). We are going to show that \(\lnot\lnot P \to P\)
  for every proposition \(P\), which is equivalent to excluded middle. So let
  \(P\) be an arbitrary proposition and assume that \(\lnot\lnot P\) holds. Then
  the element \(\chi_P \coloneqq \set{* \mid P} \in \Sierp\) satisfies
  \(\chi_P \not\below \bot\). So \(\chi_P \posnotbelow \bot\) by assumption. By
  Lemma~\ref{posnotbelow-criterion} and Example~\ref{ex:powerset}, there exists
  \(b \in \set{\bot,\top}\) such that \(b \ll \chi_P\) and \(b \neq \bot\). The
  latter implies \(b = \top = \set{*}\), which means that \(\chi_P\) must be
  inhabited, which is equivalent to \(P\) holding.
  (3): Since the intrinsic apartness is irreflexive.
  (4): Similar to~(2).
  (5): If \(c\) is compact and \(c \not\below x\), then \(\upupset c\) is a
  Scott open containing \(c\), but not \(x\).
\end{proof}

With excluded middle, complements of Scott closed subsets are Scott open. In
particular, the subset \(\set{x \in D \mid x \not\below y}\) is Scott open for
any element \(y\) of a dcpo \(D\). Constructively, we have the following result:
\begin{lemma}\label{interior-of-complement}
  For any element \(y\) of a dcpo \(D\), the Scott interior of
  \(\set{x \in D \mid x \not\below y}\) is given by the subset
  \(\set{x \in D \mid x \posnotbelow y}\), where we recall that
  \(x \posnotbelow y\) means that there exists a Scott open containing~\(x\) but
  not \(y\).
\end{lemma}
\begin{proof}
  Notice that \(\set{x \in D \mid x \posnotbelow y}\) is the same thing as
  \(\bigcup\set{U \subseteq D \mid U \text{ is Scott open and } y \not\in U}\),
  which is a union of opens and therefore open itself. Moreover,
  \(\set{x \in D \mid x \posnotbelow y}\) is contained in
  \(\set{x \in D \mid x \not\below y}\) by
  Proposition~\ref{posnotbelow-notbelow}.
  Finally, if \(V\) is a Scott open contained in
  \(\set{x \in D \mid x \not\below y}\), then \(y \not\in V\), since
  \(y \not\below y\) is false. Hence,
  \(V \subseteq \bigcup\set{U \subseteq D\mid U \text{ is Scott open and } y
    \not\in U} = \set{x \in D \mid x \posnotbelow y}\), completing the proof.
\end{proof}

\begin{proposition}\label{complement-of-closed-is-open-em}
  If the complement of every Scott closed subset of the Sierpi\'nski domain
  \(\Sierp\) is Scott open, then excluded middle follows.
\end{proposition}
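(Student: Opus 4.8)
The plan is to reduce the statement to Proposition~\ref{posnotbelow-notbelow}(ii), which already shows that excluded middle follows as soon as we know that \(x \not\below y\) implies \(x \posnotbelow y\) for all \(x,y \in \Sierp\). So, assuming that the complement of every Scott closed subset of \(\Sierp\) is Scott open, it suffices to establish exactly this implication; no separate manipulation of truth values is then needed.

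First I would fix \(x,y \in \Sierp\) with \(x \not\below y\) and recall from Example~\ref{ex:Scott-opens} that the principal lower set \(\dset y = \set{z \in \Sierp \mid z \below y}\) is Scott closed (this part of Example~\ref{ex:Scott-opens} needs no continuity assumption). By hypothesis, its complement \(U \coloneqq \set{z \in \Sierp \mid z \not\below y}\) is therefore Scott open.

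The main point is then the observation that this very set \(U\) is a Scott open neighbourhood of \(x\) that avoids \(y\): indeed \(x \in U\) since \(x \not\below y\) by assumption, and \(y \notin U\) since \(y \below y\) holds. By the definition of \({\posnotbelow}\) this gives \(x \posnotbelow y\). As \(x\) and \(y\) were arbitrary elements of \(\Sierp\), Proposition~\ref{posnotbelow-notbelow}(ii) yields excluded middle, completing the argument.

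I do not expect any real obstacle here; the only thing to notice is that the complement of the principal lower set \(\dset y\) already does all the work, so that routing through Proposition~\ref{posnotbelow-notbelow}(ii) avoids any explicit construction. If instead one insisted on a self-contained proof not citing that proposition, the effort would shift to picking, for a doubly-negated proposition \(P\), a directed subset of \(\Sierp\) whose supremum is \(\chi_P\) and which meets the relevant open, and verifying the Scott-open condition there; that is the only place where a little care would be required, and the reduction above sidesteps it.
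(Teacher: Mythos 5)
Your proof is correct and follows essentially the same route as the paper: both reduce to Proposition~\ref{posnotbelow-notbelow}(ii) by showing that the hypothesis makes \(x \not\below y\) imply \(x \posnotbelow y\) on \(\Sierp\). The only cosmetic difference is that the paper cites Proposition~\ref{interior-of-complement} for this implication, whereas you verify it directly by observing that the (assumed open) complement of the Scott closed set \(\dset y\) is a Scott open containing \(x\) but not \(y\).
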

\begin{proof}
  If the complement of every Scott closed subset of \(\Sierp\) is Scott open,
  then \(x \not\below y\) implies \(x \posnotbelow y\) for every two points
  \(x,y \in \Sierp\) by Lemma~\ref{interior-of-complement}. Hence,
  excluded middle would follow by Proposition~\ref{posnotbelow-notbelow}.
\end{proof}

An element \(x\) of a pointed dcpo may be said to be nontrivial if
\(x \neq \bot\). Given our notion of apartness, we might consider the
constructively stronger \(x \apart \bot\). We show that this is related to
Johnstone's notion of positivity~\cite[p.~98]{Johnstone1984}.
In~\cite[Definition~25]{deJongEscardo2021b} we adapted Johnstone's notion of
positivity from locales to a general class of posets that includes dcpos. Here
we give an equivalent, but simpler, definition just for pointed dcpos.
\begin{definition}[Positivity]
  An element \(x\) of a pointed dcpo \(D\) is \emph{positive} if every
  semidirected subset \(S \subseteq D\) satisfying \(x \below \dirsup S\) is
  inhabited (and hence directed).
\end{definition}

\begin{proposition}
  For every element \(x\) of a pointed dcpo, if \(x \apart \bot\), then \(x\) is
  positive.
  In the other direction, if \(D\) is a continuous pointed dcpo with a basis
  satisfying \eqref{equals-bot-decidable}, then every positive element of \(D\)
  is apart from~\(\bot\).
\end{proposition}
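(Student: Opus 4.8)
The plan is to prove the two implications separately, reducing the apartness $x \apart \bot$ to its only meaningful disjunct in both cases. Since every Scott open subset is an upper set, a Scott open containing $\bot$ must be all of $D$; hence $\bot \posnotbelow x$ never holds, and so $x \apart \bot$ is equivalent to $x \posnotbelow \bot$, i.e.\ to the existence of a Scott open $U$ with $x \in U$ and $\bot \notin U$.

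For the forward direction, I would suppose such a $U$ exists and let $S \subseteq D$ be semidirected with $x \below \dirsup S$, the supremum existing because $D$ is pointed. As noted right after the definition of a pointed dcpo, $S \cup \set{\bot}$ is directed with $\dirsup(S \cup \set{\bot}) = \dirsup S$, so $\dirsup(S \cup \set{\bot}) \in U$ because $U$ is an upper set; since $U$ is Scott open, some element of $S \cup \set{\bot}$ lies in $U$, and as $\bot \notin U$ this element must lie in $S$. Thus $S$ is inhabited, so $x$ is positive. I expect this direction to be entirely routine.

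For the converse, fix a basis $B$ of $D$ satisfying \eqref{equals-bot-decidable} and a positive element $x$. The idea is to apply positivity to the set $S' \coloneqq \set{b \in B \mid b \ll x \text{ and } b \neq \bot}$. First I would check that $S'$ is semidirected: given $b_1, b_2 \in S'$, directedness of $B \cap \ddset x$ supplies $b_3 \in B \cap \ddset x$ above both, and $b_3 \neq \bot$, since $b_3 = \bot$ would force $b_1 \below \bot$ and hence $b_1 = \bot$. Next, using \eqref{equals-bot-decidable} to decide, for each $b \in B \cap \ddset x$, whether $b = \bot$, I would show $\dirsup S' = x$: the inclusion $S' \subseteq B \cap \ddset x$ gives $\dirsup S' \below x$, while conversely $\dirsup S'$ is an upper bound of $B \cap \ddset x$ (each such $b$ is either equal to $\bot$, hence below everything, or distinct from $\bot$ and so a member of $S'$), so $x = \dirsup(B \cap \ddset x) \below \dirsup S'$. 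Now $x \below \dirsup S'$, so positivity of $x$ makes $S'$ inhabited: there is $b \in B$ with $b \ll x$ and $b \neq \bot$. Since $b \below \bot$ is equivalent to $b = \bot$ by antisymmetry, we have $b \not\below \bot$, and Lemma~\ref{posnotbelow-criterion} yields $x \posnotbelow \bot$, i.e.\ $x \apart \bot$.

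The one step where real care is needed --- and the main obstacle, such as it is --- is the appeal to \eqref{equals-bot-decidable} in the converse. Without that decidability hypothesis one cannot split $B \cap \ddset x$ into the part equal to $\bot$ and the part distinct from it, and then the identity $\dirsup S' = x$, equivalently the fact that $S'$ is a legitimate witnessing set for positivity, can break down; so this is precisely the step the hypothesis is there to make go through.
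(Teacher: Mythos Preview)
Your proposal is correct and follows essentially the same approach as the paper: both directions use the same key ideas (adjoining $\bot$ to make $S$ directed in the forward direction, and applying positivity to the semidirected set $\set{b \in B \mid b \ll x,\, b \neq \bot}$ after using \eqref{equals-bot-decidable} to show $x$ lies below its supremum in the converse). Your write-up is in fact slightly more careful than the paper's, in that you justify explicitly why $x \apart \bot$ reduces to $x \posnotbelow \bot$ and spell out why the witnessing set is semidirected.
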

\begin{proof}
  Suppose first that \(x\) is an element of a pointed dcpo \(D\) and that
  \(x \apart \bot\). Then we have a Scott open~\(U\) containing \(x\) but not
  \(\bot\). Now suppose that \(S \subseteq D\) is semidirected with
  \(x \below \dirsup S\). Scott opens are upper sets, so \(\dirsup S \in U\).
  Now \(S' \coloneqq S \cup \set{\bot}\) is a directed subset of \(D\) with the
  same supremum as \(S\). In particular, \(\dirsup S' \in U\). But then there
  exists \(y \in S'\) such that \(y \in U\) already, because \(U\) is Scott
  open.
  By construction of \(S'\), we have \(y = \bot\) or \(y \in S\).
  As the former is impossible, we are done.

  For the other claim, assume that \(D\) is continuous with a basis \(B\)
  satisfying \eqref{equals-bot-decidable} and that \(x \in D\) is
  positive. Consider the subset
  \(S \coloneqq \set{b \in B \mid b \ll x , b \neq \bot}\) and note that it is
  semidirected because \(B \cap \ddset x\) is. If we can show that \(S\) is
  inhabited, then we see that \(x \apart \bot\) by
  Lemma~\ref{posnotbelow-criterion}, which would finish the proof. As \(x\) is
  positive, it suffices to prove that \(x \below \dirsup S\). Since \(B\) is a
  basis, it is enough to prove that \(b \below \dirsup S\) for every \(b \in B\)
  with \(b \ll x\). So suppose that we have \(b \in B\) with \(b \ll x\). By the
  decidability condition \eqref{equals-bot-decidable}, either \(b = \bot\) or
  \(b \neq \bot\). In the first case, we get \(b = \bot \below \dirsup S\), as
  desired; and in the second case, we get \(b \below \dirsup S\) because
  \(b \in S\).
\end{proof}

\section{The apartness topology}\label{sec:apartness-topology}
In~\cite[Section~2.2]{BridgesVita2011}, the authors start with a
topological space \(X\) equipped with an apartness relation~\({\apart}\) and,
using the topology and apartness, define a second topology on~\(X\), known as
the \emph{apartness topology}. A~natural question is whether the original
topology and the apartness topology coincide. For example, if \(X\) is a metric
space and we set two points of \(X\) to be apart if their distance is strictly
positive, then the Bridges-V\^i\c{t}\v{a} apartness topology and the topology
induced by the metric coincide (Proposition~2.2.10 in ibid).
We show, assuming a modest \(\lnot\lnot\)-stability condition that holds in
examples of interest, that the Scott topology on a continuous dcpo with the
intrinsic apartness relation coincides with the apartness topology.

We start by repeating some basic definitions and results of Bridges and
V\^i\c{t}\v{a}.  Recalling Warning~\ref{warning} on~page~\pageref{warning}, we
remind the reader familiar with~\citep{BridgesVita2011} that they write \(\neq\)
and use the word \emph{inequality} for what we denote by \(\apart\) and call
\emph{apartness}.

In constructive mathematics, positively defined notions are usually more useful
than negatively defined ones. We already saw examples of this: \({\apart}\)
versus \({\neq}\) and \({\posnotbelow}\) versus \({\not\below}\). We now use an
apartness to give a positive definition of the complement of a set.
\begin{definition}[(Logical) complement; {\cite[pp.~19--20]{BridgesVita2011}}]
  Given a subset \(A\) of a set \(X\) with an apartness \(\apart\) we define the
  \emph{logical complement} and the \emph{complement} respectively as
  \begin{enumerate}
  \item \(\lcompl A \coloneqq \set{x \in X \mid x \not\in A} =
    \set{x \in X \mid \forall_{y \in A}\, x \neq y}\), and
  \item \(\compl A \coloneqq \set{x \in X \mid \forall_{y \in A}\,x \apart y}\).
  \end{enumerate}
\end{definition}

Recall that, classically, a topological space is a \(T_0\) or \emph{Kolmogorov}
space if \(x \neq y\) implies that there exists an open \(U\) such that
\(x \in U\) and \(y \not\in U\).
This explains the name for the following definition of Bridges and
V\^i\c{t}\v{a}.

\begin{definition}[Topological reverse Kolmogorov property; p.~29 in ibid]
  A topological space \(X\) equipped with an apartness \({\apart}\) satisfies
  the \emph{(topological) reverse Kolmogorov property} if for every open \(U\)
  and points \(x,y \in X\) with \(x \in U\) and \(y \not\in U\), we have
  \(x \apart y\).
\end{definition}

\begin{lemma}[Proposition~2.2.2 in ibid]
  \label{reverse-Kolmogorov-consequence}
  If a topological space \(X\) equipped with an apartness \(\apart\) satisfies
  the reverse Kolmogorov property, then for every subset \(A \subseteq X\), we
  have \(\interior{\lcompl A} = \interior{\compl A}\), where \(Y^\circ\)
  denotes the interior of \(Y\) in \(X\).
\end{lemma}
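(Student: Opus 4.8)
The plan is to establish the two inclusions $\interior{\lcompl A} \subseteq \interior{\compl A}$ and $\interior{\compl A} \subseteq \interior{\lcompl A}$ separately, the second of which is essentially immediate. Indeed, since $x \apart y$ implies $x \neq y$ for any apartness (irreflexivity gives this, cf.\ the remark after Definition~\ref{def:apartness}), we have $\compl A \subseteq \lcompl A$ as subsets of $X$. Taking interiors is monotone, so $\interior{\compl A} \subseteq \interior{\lcompl A}$ follows with no use of the reverse Kolmogorov property.

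For the reverse inclusion, I would argue pointwise. Let $x \in \interior{\lcompl A}$; then there is an open $U$ with $x \in U \subseteq \lcompl A$. The key claim is that in fact $U \subseteq \compl A$, from which $x \in \interior{\compl A}$ is immediate since $U$ is open. To prove the claim, fix an arbitrary $z \in U$; I must show $z \in \compl A$, i.e.\ $z \apart y$ for every $y \in A$. So let $y \in A$ be arbitrary. Since $U \subseteq \lcompl A$ and $y \in A$, we have $y \notin U$ (if $y \in U$ then $y \in \lcompl A$, so $y \neq y$, a contradiction). Now apply the reverse Kolmogorov property to the open $U$ and the points $z \in U$ and $y \notin U$: this yields $z \apart y$, as desired.

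Assembling the two inclusions gives $\interior{\lcompl A} = \interior{\compl A}$, completing the proof. The only place the reverse Kolmogorov property is used is in the final step of the pointwise argument, which is also the one genuinely nontrivial step; everything else is monotonicity of the interior operator and irreflexivity of the apartness. I do not anticipate any real obstacle here, since the reverse Kolmogorov property is more or less tailored to make exactly this argument go through — the mild subtlety is simply being careful to pass from ``$x$ lies in the interior'' to ``the witnessing open $U$ is contained in $\compl A$'', rather than trying to compare the two interiors directly.
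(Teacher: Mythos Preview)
Your proof is correct. The paper does not supply its own proof of this lemma; it merely cites it as Proposition~2.2.2 in Bridges--V\^i\c{t}\v{a}. Your argument is exactly the standard one: the inclusion \(\interior{\compl A} \subseteq \interior{\lcompl A}\) follows from \(\compl A \subseteq \lcompl A\) (using only irreflexivity), and for the other inclusion you correctly observe that any open \(U \subseteq \lcompl A\) already lies in \(\compl A\) by a direct application of the reverse Kolmogorov property. There is nothing to add.
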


\begin{definition}[Apartness complement and topology, nearly open subset; pp.~20, 28 and 31 in ibid]
  For an element \(x\) of a topological space \(X\) with an apartness \(\apart\)
  and a subset \(A \subseteq X\), we write \(x \setapart A\) if
  \(x \in \interior{\compl A}\). This gives rise to the \emph{apartness
    complement}: \(\acompl A \coloneqq \set{x \in X \mid x \bowtie A}\).
  Subsets of the form \(\acompl A\) are called \emph{nearly open}.
  The \emph{apartness topology} on \(X\) is the topology whose basic opens are
  the nearly open subsets of~\(X\).
\end{definition}

While the name ``nearly open'' may suggest otherwise, the apartness topology is
coarser than the original topology:

\begin{lemma}[Proposition~2.2.7 in ibid]
  \label{nearly-open-is-open}
  Every nearly open subset of \(X\) is open in the original topology of~\(X\).
\end{lemma}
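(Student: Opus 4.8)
The plan is to observe that the apartness complement of a set is, by unfolding the definitions, literally the interior of its complement, and interiors are open by fiat.

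First I would recall that, by definition, $\acompl A = \set{x \in X \mid x \setapart A}$ and that $x \setapart A$ holds precisely when $x \in \interior{\compl A}$. Chaining these two definitions gives
\[
  \acompl A = \set{x \in X \mid x \in \interior{\compl A}} = \interior{\compl A}.
\]
So the nearly open subset $\acompl A$ is nothing other than $\interior{\compl A}$.

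Next I would invoke Definition~\ref{def:interior-closure}: the interior of any subset $S \subseteq X$ is an open subset of $X$ (indeed it is the largest open contained in $S$). Applying this with $S = \compl A$ shows that $\interior{\compl A}$ is open in the original topology of $X$, and hence so is $\acompl A$. Since the nearly open subsets are exactly the sets of the form $\acompl A$, every nearly open subset is open.

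There is essentially no obstacle here: the only thing to be careful about is the bookkeeping of the three layered definitions ($\apart$-complement $\compl$, the relation $\setapart$, and the apartness complement $\acompl$), making sure that the set-builder notation collapses correctly so that $\acompl A$ is seen to be precisely $\interior{\compl A}$ rather than some strictly larger or smaller set. Once that identification is made, openness is immediate from the definition of the interior, with no use of excluded middle or any property of the apartness beyond what has already been fixed.
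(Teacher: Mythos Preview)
Your proof is correct: unfolding the definitions gives \(\acompl A = \interior{\compl A}\), which is open by the definition of interior. The paper does not supply its own proof of this lemma but simply cites it as Proposition~2.2.7 of \cite{BridgesVita2011}, so there is no approach to compare against; your argument is exactly the natural one-line unwinding.
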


The following are original contributions.
\begin{definition}[\(\lnot\lnot\)-stable basis]
  We say that a basis \(B\) for a topological space \(X\) is
  \emph{\(\lnot\lnot\)-stable} if \(U = \interior{\lnot\lnot U}\) for every open
  \(U \in B\).
  Note that \(U \subseteq \interior{\lnot\lnot U}\) holds for every open \(U\),
  so the relevant condition is that \(\interior{\lnot\lnot U} \subseteq U\) for
  every basic open \(U\).
\end{definition}
Examples of such bases will be provided by Theorem~\ref{apartness-Scott-topology} below.

\begin{lemma}\label{matching-topologies-criterion}
  If a topological space \(X\) equipped with an apartness \(\apart\) satisfies
  the reverse Kolmogorov property and has a \(\lnot\lnot\)-stable basis,
  then the original topology on \(X\) and the apartness topology on \(X\)
  coincide, i.e.\ a subset of \(X\) is open (in the original topology) if and
  only if it is nearly open.
\end{lemma}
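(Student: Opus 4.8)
The plan is to prove the two inclusions between the apartness topology and the original topology separately. One direction is already available: by Lemma~\ref{nearly-open-is-open} every nearly open subset is open in the original topology, so every open of the apartness topology is open in the original one. Hence it remains to show that every open $U$ of the original topology is nearly open, i.e.\ a union of sets of the form $\acompl A$.

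\textbf{Key steps.}
First I would reduce to basic opens: since $\interior{\lnot\lnot(\,\cdot\,)}$ and $\acompl(\,\cdot\,)$ and arbitrary unions all interact well with one another, it suffices to show that every member $U$ of the given $\lnot\lnot$-stable basis $B$ is nearly open, because an arbitrary open is a union of such $U$ and the nearly open sets are exactly the basic opens of the apartness topology (so their unions are apartness-open). Second, fix a basic open $U \in B$ and consider its logical complement $\lcompl U$. I claim $U = \acompl{(\lcompl U)}$. Unwinding the definition, $\acompl{(\lcompl U)} = \set{x \in X \mid x \setapart \lcompl U} = \interior{\compl{(\lcompl U)}}$. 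Now apply the reverse Kolmogorov property, which via Lemma~\ref{reverse-Kolmogorov-consequence} (with $A = \lcompl U$) gives $\interior{\lcompl{(\lcompl U)}} = \interior{\compl{(\lcompl U)}}$. Third, observe that $\lcompl{(\lcompl U)}$ is precisely $\set{x \mid \lnot\lnot(x \in U)} = \lnot\lnot U$, so the right-hand side equals $\interior{\lnot\lnot U}$, which by $\lnot\lnot$-stability of the basis equals $U$. Chaining these equalities yields $\acompl{(\lcompl U)} = \interior{\compl{(\lcompl U)}} = \interior{\lcompl{(\lcompl U)}} = \interior{\lnot\lnot U} = U$, exhibiting $U$ as a nearly open set. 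That finishes the nontrivial inclusion.

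\textbf{Main obstacle.}
The only genuinely delicate point is bookkeeping about the two complements: one must check carefully that $\lcompl{(\lcompl U)}$ unfolds to exactly $\interior{\lnot\lnot U}$ after taking interiors, being careful not to conflate $\lcompl{(\lcompl U)}$ (which is $\lnot\lnot U$ as a set, before any interior is taken) with the set $\acompl{(\lcompl U)}$ (which already has an interior built in through $\setapart$). Everything else is a direct application of Lemmas~\ref{reverse-Kolmogorov-consequence} and~\ref{nearly-open-is-open} together with the definition of $\lnot\lnot$-stability; constructively there is no subtlety, since we never need $\lnot\lnot U \subseteq U$ itself, only after passing to interiors, which is exactly what the hypothesis supplies.
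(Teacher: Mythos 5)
Your proposal is correct and follows essentially the same route as the paper's proof: reduce to basic opens, then show each basic open \(U\) satisfies \(U = \acompl\pa*{\lcompl U}\) by combining Lemma~\ref{reverse-Kolmogorov-consequence} with \(\lnot\lnot\)-stability, with Lemma~\ref{nearly-open-is-open} handling the other inclusion. Your careful distinction between \(\lcompl{\lcompl U}\) and \(\acompl\pa*{\lcompl U}\) is exactly the bookkeeping the paper's displayed chain of equalities performs.
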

\begin{proof}
  By Lemma~\ref{nearly-open-is-open}, every nearly open subset of \(X\) is open
  in the original topology. So it remains to prove the converse. For this it
  suffices to prove that every basic open of \(X\) is nearly open. So let \(U\)
  be an arbitrary basic open of \(X\). We are going to show that
  \(U = \acompl\pa*{\lcompl U}\), which proves the claim, as the latter is
  nearly open. To this end, observe that
  \begin{align*}
    \acompl\pa*{\lcompl U} &= \set{x \in X \mid x \setapart {\lcompl U}} &&\text{(by definition)} \\
    &= \set{x \in X \mid x \in \interior{\compl\pa*{\lcompl U}}} &&\text{(by definition)} \\
    &= \set{x \in X \mid x \in \interior{\lcompl{\lcompl U}}} &&\text{(by Lemma~\ref{reverse-Kolmogorov-consequence})} \\
    &= \set{x \in X \mid x \in U} &&\text{(by \(\lnot\lnot\)-stability of the basis)},
  \end{align*}
  so that \(U = \acompl\pa*{\lcompl U}\), as desired.
\end{proof}

\begin{therm}\label{apartness-Scott-topology}
  Let \(D\) be a continuous dcpo with a basis \(B\). Each of the following
  conditions on the basis \(B\) implies that \(\set{\upupset b \mid b \in B}\) is a
  \(\lnot\lnot\)-stable basis for the Scott topology on \(D\):
  \begin{enumerate}
  \item For every \(a,b\in B\), if \(\lnot\lnot(a \ll b)\), then \(\pa*{a \ll b}\).
  \item For every \(a,b\in B\), if \(\lnot\lnot(a \below b)\), then \(\pa*{a \below b}\).
  \item \(B\) satisfies \eqref{way-below-decidable}
  \item \(B\) satisfies \eqref{below-decidable}
  \end{enumerate}
  Hence, if one of these conditions holds, then the Scott topology on \(D\)
  coincides with the apartness topology on~\(D\) with respect to the intrinsic
  apartness, i.e.\ a subset of \(D\) is Scott open if and only if it is nearly
  open.
\end{therm}

\begin{proof}
  The final claim follows from Lemma~\ref{matching-topologies-criterion} and the
  fact that the intrinsic apartness satisfies the reverse Kolmogorov property
  (by definition). Moreover, (3) implies (1) and (4) implies (2). So it
  suffices to show that if (1) or (2) holds, then
  \(\set{\upupset a \mid a \in B}\) is a \(\lnot\lnot\)-stable basis for the
  Scott topology on \(D\), viz.\ that
  \(\interior{\lnot\lnot\upupset a} \subseteq \upupset a\) for every
  \(a \in B\).  Let \(a \in B\) be arbitrary and suppose that
  \(x \in \interior{\lnot\lnot\upupset a}\).  Using Scott openness and
  continuity of \(D\), there exists \(b \in B\) such that \(b \ll x\) and
  \(b \in \lnot\lnot\upupset a\). The latter just says that
  \(\lnot\lnot (a \ll b)\).
  So if condition~(1) holds, then we get \(a \ll b\), so \(a \ll x\) and
  \(x \in \upupset a\), as desired.
  Now suppose that condition~(2) holds. From \(\lnot\lnot(a \ll b)\), we get
  \(\lnot\lnot(a \below b)\) and hence, \(a \below b\) by condition~(2). So
  \(a \below b \ll x\) and \(x \in \upupset a\), as wished.
\end{proof}

\section{Tightness, cotransitivity and sharpness}\label{sec:tightness-cotransitivity-sharpness}
We show that, in general, the intrinsic apartness will not be tight or
cotransitive. In fact, we cannot expect \emph{any} tight or cotransitive
apartness relation on nontrivial dcpos.
To remedy this, we introduce the notion of sharpness and we show
(Theorem~\ref{sharp-tight-cotransitive}) that the sharp elements do satisfy
tightness and cotransitivity for the intrinsic apartness.

\begin{definition}[Tightness and cotransitivity]
  An apartness relation \(\apartvar\) on a set \(X\) is \emph{tight} if
  \(\lnot(x \apartvar y)\) implies \(x = y\) for every \(x,y \in
  X\). The~apartness is \emph{cotransitive} if \(x\apartvar y\) implies the
  disjunction of \(x \apartvar z\) and \(x \apartvar y\) for every \(x,y,z \in X\).
\end{definition}

\begin{lemma}\label{tight-implies-notnot-sep}
  If \(X\) is a set with a tight apartness, then \(X\) is
  \(\lnot\lnot\)-separated, viz.\ \(\lnot\lnot(x=y)\) implies \(x = y\) for
  every \(x,y \in X\).
\end{lemma}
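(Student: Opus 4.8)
The plan is to exploit the fact that a tight apartness reduces the goal \(x=y\) to the purely negative statement \(\lnot(x \apartvar y)\), which we can then discharge using only irreflexivity of \(\apartvar\) together with the hypothesis \(\lnot\lnot(x=y)\). So, fixing \(x,y \in X\) with \(\lnot\lnot(x=y)\), the first move is to invoke tightness: it suffices to establish \(\lnot(x \apartvar y)\).

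Next I would prove \(\lnot(x \apartvar y)\) by assuming \(x \apartvar y\) and deriving a contradiction. Since any apartness is in particular irreflexive, \(x \apartvar y\) entails \(x \neq y\), i.e.\ \(\lnot(x=y)\) (this is the observation, recorded just after Definition~\ref{def:apartness}, that \(\apart\) — here written \(\apartvar\) — is a strengthening of inequality). Combining \(\lnot(x=y)\) with the standing assumption \(\lnot\lnot(x=y)\) yields \(\bot\). Hence \(\lnot(x \apartvar y)\), and tightness then delivers \(x = y\), which is exactly \(\lnot\lnot\)-separation.

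There is no real obstacle here — the argument is a short chain of intuitionistically valid implications, using nothing beyond irreflexivity and the definition of tightness, so no choice or excluded middle is needed. The only point worth stating explicitly is the lemma "\(x \apartvar y \Rightarrow x \neq y\)'', which is immediate from irreflexivity: if \(x = y\) and \(x \apartvar y\) then \(x \apartvar x\), contradicting irreflexivity.
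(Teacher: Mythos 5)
Your proof is correct and follows the same route as the paper: use irreflexivity to get \(x \apartvar y \Rightarrow \lnot(x=y)\), contrapose against \(\lnot\lnot(x=y)\) to obtain \(\lnot(x \apartvar y)\), then apply tightness. No gaps; the argument is intuitionistically valid exactly as you state.
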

\begin{proof}
  For any apartness \({\apartvar}\) on \(X\) we have that \(x \apartvar y\)
  implies \(\lnot(x = y)\). Hence, \(\lnot\lnot(x=y)\) implies
  \(\lnot(x\apartvar y)\), so if \({\apartvar}\) is tight, then \(X\) is
  \(\lnot\lnot\)-separated.
\end{proof}

It will be helpful to employ the following positive (but classically equivalent)
formulation of \(\pa*{x \below y} \land \pa*{x \neq y}\) from
\cite[Definition~20]{deJongEscardo2021b}.

\begin{definition}[Strictly below \(\sbelow\)]
  An element \(x\) of a dcpo \(D\) is \emph{strictly below} an element \(y\),
  written \(x \sbelow y\), if \(x \below y\) and for every \(z \aboveorder y\)
  and proposition \(P\), the equality
  \(z = \dirsup\pa*{\set{x} \cup \set{z \mid P}}\) implies \(P\).
\end{definition}

We can relate the above notion to the intrinsic apartness, but, although we do
not have a counterexample, we believe \(x \sbelow y\) to be weaker than
\(x \apart y\) in general. However, if \(x\) is a sharp element of a continuous
dcpo, then \(x \sbelow y\) does imply \(x \apart y\), as shown in
Proposition~\ref{sbelow-implies-apart}.

\begin{proposition}\label{apart-implies-sbelow}
  If \(x \below y\) are elements of a dcpo \(D\) with \(x \apart y\), then
  \(x \sbelow y\).
\end{proposition}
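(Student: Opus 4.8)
The goal is to show that if $x \below y$ and $x \apart y$ in a dcpo $D$, then $x \sbelow y$; that is, assuming further that $z \aboveorder y$, that $P$ is a proposition, and that $z = \dirsup\pa*{\set{x} \cup \set{z \mid P}}$, we must derive $P$. Since $x \apart y$ unpacks (by Definition~\ref{def:intrinsic-apartness}) as $x \posnotbelow y$ or $y \posnotbelow x$, the first thing I would do is observe that $x \below y$ rules out the second disjunct: by Lemma~\ref{order-specialization}, $x \below y$ gives $x \leq y$ in the specialization order, which by definition means every Scott open containing $x$ also contains $y$, so $y \posnotbelow x$ is impossible. Hence we may assume $x \posnotbelow y$: there is a Scott open $U$ with $x \in U$ and $y \notin U$.

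Now I would use the hypothesis $z \aboveorder y$ together with $z = \dirsup\pa*{\set{x} \cup \set{z \mid P}}$. The set $S \coloneqq \set{x} \cup \set{z \mid P}$ is semidirected (any two of its elements are bounded by $z$, which lies in $S$ whenever $P$ holds; if $P$ fails, $S = \set{x}$), and it is inhabited by $x$, hence directed, and $\dirsup S = z$. The plan is to show $\dirsup S \in U$, so that Scott openness of $U$ yields some $s \in S$ with $s \in U$, and then to rule out $s = x$. For the first part: $y \below z$ and... wait — that gives $z \in U$ only if $y \in U$, which is false. So instead I argue from below: $x \in U$ and $x \below z = \dirsup S$ and $U$ an upper set give $\dirsup S = z \in U$ directly. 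Good. Then, since $U$ is Scott open, there is $s \in S$ with $s \in U$. If $s = x$, this tells us nothing new, so this is where the key obstacle lies: I need the membership $x \in U$ to be "already witnessed" by the $z$-branch.

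To handle this cleanly I would instead feed a cleverly chosen open to Scott-openness: consider $V \coloneqq U \cap \upset y$... no. The clean move: apply Scott openness of $U$ not to $S$ but observe that $\dirsup S = z$ and $y \below z$, so if I could separate $x$ from $z$ inside $U$... Actually the correct and simplest route is: since $\dirsup S \in U$ and $U$ is Scott open, $\exists s \in S \cap U$. Now $S = \set{x} \cup \set{z \mid P}$, so $s = x$ or ($P$ and $s = z$). In the latter case we are done. In the former case $s = x \in U$, which we already knew and which does not help — unless we instead use that $z \in U$ and $z \aboveorder y$ to get a contradiction with $y \notin U$: indeed $y \below z \in U$ and $U$ upward closed force $y \in U$, contradicting $y \notin U$. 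So in fact $\dirsup S = z \in U$ is already contradictory with the existence of our separating open, meaning the hypothesis $z \aboveorder y$ with $x \below z$ and $x \in U$, $y \notin U$ is inconsistent outright — hence $P$ holds vacuously (anything follows). That is the proof: from $x \apart y$ and $x \below y$ extract $U$ with $x \in U$, $y \notin U$; from $x \below z$ conclude $z \in U$; from $y \below z$ and $U$ an upper set conclude $y \in U$, a contradiction; therefore $P$.

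The main obstacle is realizing that one does not need to interact with the proposition $P$ or the supremum decomposition at all beyond noting $z \aboveorder y$ and $x \below z$: the separating open $U$ together with $x \below y \below z$ and $x \in U$ is already contradictory, so $x \sbelow y$ holds trivially because its defining implication has a false (indeed contradictory) set of hypotheses. I should double-check that $x \below y$ is genuinely part of the hypothesis of $x \sbelow y$ (it is, by Definition of $\sbelow$) so that nothing extra need be verified, and that the first clause "$x \below y$" in the conclusion $x \sbelow y$ is discharged immediately by assumption. One subtlety to state carefully: $x \apart y$ could a priori be witnessed by $y \posnotbelow x$, and I must explicitly invoke Lemma~\ref{order-specialization} to exclude this, since without $x \below y$ the statement would be false.
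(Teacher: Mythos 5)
Your resolution of the disjunction in \(x \apart y\) goes the wrong way, and the rest of the argument collapses because of it. Since Scott opens are upper sets, \(x \below y\) means every Scott open containing \(x\) also contains \(y\); this makes \(x \posnotbelow y\) impossible, not \(y \posnotbelow x\). So the witness of \(x \apart y\) is necessarily a Scott open \(U\) with \(y \in U\) and \(x \notin U\); the open you work with (containing \(x\) but missing \(y\)) cannot exist. The ``outright contradiction'' you reach at the end is only a symptom of this mis-step, and the conclusion that \(x \sbelow y\) holds vacuously is false: the hypotheses of strict belowness are perfectly consistent here --- in \(\Sierp\) take \(x = \bot\), \(y = z = \top\) and \(P\) true; then \(x \below y\), \(x \apart y\), \(z \aboveorder y\) and \(z = \dirsup\pa*{\set{x} \cup \set{z \mid P}}\) all hold, so \(P\) genuinely has to be derived rather than obtained ex falso. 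There is also a second, independent error: from \(z \in U\) and \(y \below z\) you conclude \(y \in U\), but upper sets are closed upwards, not downwards, so that inference is invalid on its own terms.

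The plan you sketched midway is in fact the right one once \(U\) is oriented correctly. With a Scott open \(U\) such that \(y \in U\) and \(x \notin U\): given \(z \aboveorder y\), a proposition \(P\) and \(z = \dirsup S\) for \(S = \set{x} \cup \set{z \mid P}\), we get \(z \in U\) from \(y \in U\) and \(y \below z\) (upper set, now used in the correct direction); Scott openness of \(U\) gives some \(s \in S\) with \(s \in U\); and \(s = x\) is impossible because \(x \notin U\), so \(s\) lies in \(\set{z \mid P}\) and \(P\) holds. This is essentially the paper's proof, which phrases the witness of \(y \posnotbelow x\) via Lemma~\ref{posnotbelow-criterion} as an element \(b \ll y\) with \(b \not\below x\) and applies \(b \ll z = \dirsup S\) to the supremum instead of Scott openness of \(U\) directly; a direct open-based argument of the kind you were aiming for even avoids the continuity assumption that lemma requires.
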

\begin{proof}
  Suppose we have \(x \below y\) and \(x \apart y\). Then \(y \posnotbelow x\)
  and hence by Lemma~\ref{posnotbelow-criterion}, there exists \(b \in D\) with
  \(b \ll y\) but \(b \not\below x\). Now let \(z \aboveorder y\) be arbitrary
  and \(P\) be a proposition with
  \(z = \dirsup\pa*{\set{x} \cup \set{z \mid P}}\). We must prove that \(P\)
  holds. Since \(b \ll y \below z\), we have
  \(b \ll z = \dirsup\pa*{\set{x} \cup \set{z \mid P}}\) and hence
  \(b \below x\) or \(b \below y\) and \(P\) holds. But \(b \not\below x\), so
  \(P\) must hold, as desired.
\end{proof}

\begin{example}
  In the Sierpi\'nski domain \(\Sierp\) we have \(\bot \sbelow \top\). In the
  powerset \(\powerset{X}\) of some set \(X\), the empty set is strictly below a
  subset \(A\) of \(X\) if and only if \(A\) is inhabited. More generally, if
  \(A \subseteq B\) are subsets of some set, then \(A \sbelow B\) holds if
  \(B \setminus A\) is inhabited, and if \(A\) is a decidable subset and
  \(A \sbelow B\), then \(B \setminus A\) is inhabited.
\end{example}

The following shows that we cannot expect \emph{any} tight or cotransitive
apartness relations on nontrivial dcpos.

\begin{therm}\label{tight-cotransitive-wem}
  Let \(D\) be a dcpo with an apartness relation \({\apartvar}\).
  \begin{enumerate}
  \item If \(D\) has elements \(x \sbelow y\), then tightness of \({\apartvar}\)
    implies excluded middle.
  \item If \(D\) has elements \(x \below y\) with \(x \apartvar y\), then
    cotransitivity of \({\apartvar}\) implies weak excluded middle.
  \end{enumerate}
\end{therm}

\begin{proof}
  (1): If \({\apartvar}\) is tight, then \(D\) is \(\lnot\lnot\)-separated by
  Lemma~\ref{tight-implies-notnot-sep}, which, since \(D\) has elements
  \(x \sbelow y\), implies excluded middle by
  \cite[Theorem~38]{deJongEscardo2021b}.
  (2): For a proposition~\(P\), consider the supremum \(s_P\) of the
  directed subset \(\set{x} \cup \set{y \mid P}\). If \({\apartvar}\) is
  cotransitive, then either \(x \apartvar s_P\) or \(y \apartvar s_P\). In the
  first case, \(x \neq s_P\), so that \(\lnot\lnot P\) must be the case. In the
  second case, \(y \neq s_P\), so that \(\lnot P\) must be true. Hence,
  \(\lnot P\) is decidable and weak excluded middle follows.
\end{proof}

Theorem~\ref{tight-cotransitive-wem} works for \emph{any} apartness
relation. The following result specializes to the intrinsic apartness and
derives full excluded middle from cotransitivity.

\begin{therm}\label{tight-cotransitive-em}\hfill
  \begin{enumerate}
  \item If excluded middle holds, then the intrinsic apartness on any dcpo is
    tight.
    In the other direction, if \(D\) is a continuous dcpo with elements
    \(x \below y\) that are intrinsically apart, then tightness of the intrinsic
    apartness on \(D\) implies excluded middle.
  \item If excluded middle holds, then the intrinsic apartness on any dcpo is
    cotransitive.
    In the other direction, if \(D\) is a continuous dcpo and has elements
    \(x \below y\) that are intrinsically apart, then cotransitivity of the
    intrinsic apartness on \(D\) implies excluded middle.
  \item In particular, if the intrinsic apartness on the Sierpi\'nski domain
    \(\Sierp\) is tight or cotransitive, then excluded middle follows.
  \end{enumerate}
\end{therm}
\begin{proof}
  (1): If excluded middle holds, then \(\apart\) coincides with \(\neq\) which
  is easily seen to be tight using excluded middle.
  Now suppose that \(D\) is continuous with elements \(x \below y\) such that
  \(x \apart y\). By Theorem~\ref{tight-cotransitive-wem}, it satisfies to show
  that \(x \sbelow y\), but this follows from
  Proposition~\ref{apart-implies-sbelow}.

  (2): If excluded middle holds, then \(\apart\) coincides with \(\neq\) which
  is easily seen to be cotransitive using excluded middle.
  Now suppose that \(D\) is continuous with elements \(x \below y\) such that
  \(x \apart y\). As in the proof of Theorem~\ref{tight-cotransitive-wem}, for
  any proposition \(P\), consider the supremum \(s_P\) of the directed subset
  \(\set{x} \cup \set{y \mid P}\). If \({\apart}\) is cotransitive, then either
  \(x \apart s_P\) or \(y \apart s_P\).
  If \(y \apart s_P\), then \(\lnot P\) must be the case, as \(P\) implies that
  \(y = s_P\).
  So suppose that \(x \apart s_P\). Then \(s_P \posnotbelow x\), because
  \(x \below s_P\). By Lemma~\ref{posnotbelow-criterion}, there exists
  \(d \in D\) with \(d \ll s_P\) and \(d \not\below x\).  Since \(d \ll s_P\),
  there must exist \(e \in \set{x}\cup\set{y \mid P}\) such that \(d \below
  e\). But \(d \not\below x\), so \(e = y\) and \(P\) must be true. Hence, \(P\)
  is decidable.

  (3): This follows from the above and the fact that the elements
  \(\bot \below \top\) of \(\Sierp\) are intrinsically apart.
\end{proof}

We now isolate a collection of elements, which we call sharp elements, for which
the intrinsic apartness \emph{is} tight and cotransitive. The definition of a
sharp element of a continuous dcpo may be somewhat opaque, but the algebraic
case (Proposition~\ref{sharp-algebraic}) is easier to understand: an element
\(x\) is sharp if and only if \(c \below x\) is decidable for every compact
element \(c\). Sharpness also occurs naturally in our examples in
Section~\ref{sec:examples}, e.g.\ a lower real is sharp if and only if it is
located.

\begin{definition}[Sharpness]
  An element \(x\) of a dcpo \(D\) is \emph{sharp} if for every \(y,z\in D\)
  with \(y \ll z\) we have \(y \ll x\) or \(z \not\below x\).
\end{definition}
Theorem~\ref{basis-is-sharp} below provides many examples of sharp elements.
Our first result is that sharpness in continuous dcpos is equivalent to a
seemingly stronger condition.

\begin{proposition}\label{sharp-seemingly-stronger}
  An element \(x\) of a continuous dcpo \(D\) is sharp if and only if for every
  \(y,z \in D\) with \(y \ll z\) we have \(y \ll x\) or \(z \posnotbelow x\).
\end{proposition}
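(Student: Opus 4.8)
The plan is to prove the two implications separately, the backward direction being immediate and the forward direction relying on the interpolation property together with the characterization of \(\posnotbelow\) in Lemma~\ref{posnotbelow-criterion}.

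For the backward direction, suppose the seemingly stronger condition holds and let \(y \ll z\). We obtain \(y \ll x\) or \(z \posnotbelow x\); in the latter case, Proposition~\ref{posnotbelow-notbelow}(i) gives \(z \not\below x\). Hence \(y \ll x\) or \(z \not\below x\), which is exactly sharpness of \(x\). This direction uses nothing beyond \(\posnotbelow\) implying \(\not\below\).

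For the forward direction, suppose \(x\) is sharp and let \(y \ll z\) in \(D\). The key move is to interpolate: by Proposition~\ref{interpolation} there exists \(w \in D\) with \(y \ll w \ll z\). Applying sharpness of \(x\) to the pair \(y \ll w\) yields \(y \ll x\) or \(w \not\below x\). In the first case we are done. In the second case we have \(w \ll z\) together with \(w \not\below x\), so Lemma~\ref{posnotbelow-criterion} (with \(b \coloneqq w\)) gives \(z \posnotbelow x\), as desired.

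The only subtlety — and the step I would flag as the main point rather than an obstacle — is remembering to apply interpolation before invoking sharpness: sharpness on the pair \(y \ll z\) directly only gives \(z \not\below x\), which is too weak, whereas sharpness on \(y \ll w\) (with \(w\) interpolating) produces a witness \(w\) for the existential in Lemma~\ref{posnotbelow-criterion} because \(w \ll z\) still holds. No case analysis on decidability or any further hypotheses are needed.
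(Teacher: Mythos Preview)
Your proof is correct and follows essentially the same approach as the paper: interpolate, apply sharpness, and invoke Lemma~\ref{posnotbelow-criterion}. In fact your argument is slightly more economical --- the paper interpolates twice to obtain \(y \ll u \ll v \ll z\) and applies sharpness to the middle pair \(u \ll v\), whereas your single interpolation \(y \ll w \ll z\) with sharpness applied to \(y \ll w\) already does the job.
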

\begin{proof}
  The right-to-left implication is clear, as \(z \posnotbelow x\) implies
  \(z \not\below x\). Now suppose that \(x\) is sharp and that we have
  \(y \ll z\). Using interpolation twice, there exist \(u,v \in D\) with
  \(y \ll u \ll v \ll z\). By sharpness of \(x\), we have \(u \ll x\) or
  \(v \not\below x\). If \(u \ll x\), then \(y \ll x\) and we are done; and if
  \(v \not\below x\), then \(z \posnotbelow x\) by
  Lemma~\ref{posnotbelow-criterion}.
\end{proof}

As expected, it suffices to check sharpness condition for basis elements, and in
the algebraic case, an even simpler criterion is available, as we show now.

\begin{lemma}\label{sharp-basis}
  An element \(x\) of a continuous dcpo \(D\) with a basis \(B\) is sharp if and
  only if for every \(a,b \in B\) with \(a \ll b\) we have \(a \ll x\) or
  \(b \not\below x\).
\end{lemma}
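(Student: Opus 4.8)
The plan is to prove the two implications separately, with essentially all the content living in the converse. The forward direction is immediate: if $x$ is sharp, then by definition the disjunction ``$a \ll x$ or $b \not\below x$'' holds for \emph{all} $y = a$ and $z = b$ in $D$ with $a \ll b$, so in particular for all $a, b$ in the basis $B$.

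For the converse, I would assume the basis condition and verify sharpness directly. Fix arbitrary $y, z \in D$ with $y \ll z$; the goal is to show $y \ll x$ or $z \not\below x$. The key move is to shrink $y \ll z$ to a pair of basis elements, mimicking the proof of the preceding proposition: apply the interpolation property (Proposition~\ref{interpolation}, basis version) to $y \ll z$ to get $b \in B$ with $y \ll b \ll z$, and then again to $y \ll b$ to get $a \in B$ with $y \ll a \ll b$. Now $a, b \in B$ with $a \ll b$, so the hypothesis applies: either $a \ll x$ or $b \not\below x$.

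In the first case, $y \ll a \ll x$ gives $y \ll x$ by transitivity of $\ll$ (Lemma~\ref{way-below-basics}), and we are done. In the second case, suppose towards a contradiction that $z \below x$; then from $b \ll z \below x$ we would get $b \below x$ by Lemma~\ref{way-below-basics}, contradicting $b \not\below x$. Hence $z \not\below x$, which completes the argument.

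There is no real obstacle here: the statement is a routine ``it suffices to check the condition on a basis'' lemma, and the only tool needed beyond the definitions is interpolation (used twice, exactly as in the proof of the Proposition just above it). The one point to be a little careful about is the direction of the inequalities when transporting between $y, z$ and the basis elements $a, b$ — one wants $y$ \emph{way below} $a$ (so that $a \ll x$ propagates down to $y \ll x$) and $b$ \emph{below} $z$ (so that $z \below x$ would propagate down to $b \below x$) — but both are guaranteed by the chain $y \ll a \ll b \ll z$ produced by interpolation.
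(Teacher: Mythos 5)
Your proof is correct and follows essentially the same route as the paper's: interpolation applied twice to produce $a, b \in B$ with $y \ll a \ll b \ll z$, then the identical two-case analysis (transitivity of $\ll$ in the first case, transporting $z \below x$ down to $b \below x$ for a contradiction in the second). No gaps.
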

\begin{proof}
  This is similar to the proof of Proposition~\ref{sharp-seemingly-stronger}, but we use
  that the interpolants can be found in the basis.
\end{proof}

\begin{proposition}\label{sharp-algebraic}
  An element \(x\) of an algebraic dcpo \(D\) is sharp if and only if for every
  compact \(c \in D\) it is decidable whether \(c \below x\) holds.
\end{proposition}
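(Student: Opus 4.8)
The plan is to reduce everything to Lemma~\ref{sharp-basis} applied to the basis $B$ consisting of all compact elements of $D$, which is legitimate since an algebraic dcpo is continuous and its compact elements form a basis. The one auxiliary fact I would record first is that on compact elements the relations \({\ll}\) and \({\below}\) coincide: if \(c\) is compact and \(c \below d\), then \(c \ll c \below d\), so \(c \ll d\) by Lemma~\ref{way-below-basics}, while \(c \ll d\) always gives \(c \below d\). In particular, for compact \(c\) we have \(c \ll x \iff c \below x\), and for compact \(a,b\) we have \(a \ll b \iff a \below b\).

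For the right-to-left implication, I would assume that \(c \below x\) is decidable for every compact \(c\) and verify the criterion of Lemma~\ref{sharp-basis}. Let \(a,b \in B\) with \(a \ll b\). Decide whether \(b \below x\). If \(b \below x\), then \(a \below b \below x\), and since \(a\) is compact this gives \(a \ll x\); if \(b \not\below x\), we are already in the other disjunct. Hence \(x\) is sharp.

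For the left-to-right implication, I would assume \(x\) is sharp and let \(c\) be compact. Since \(c\) is compact we have \(c \ll c\), so instantiating the criterion of Lemma~\ref{sharp-basis} at \(a = b = c\) yields \(c \ll x\) or \(c \not\below x\), that is, \(c \below x\) or \(c \not\below x\); so \(c \below x\) is decidable.

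There is no real obstacle here: the statement is essentially Lemma~\ref{sharp-basis} read off on the basis of compact elements. The only points that need a moment's care are the interchange of \({\ll}\) and \({\below}\) on compact elements (both directions via Lemma~\ref{way-below-basics}) and the observation that compactness lets one instantiate the sharpness condition at the diagonal \(a = b = c\).
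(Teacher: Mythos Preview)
Your argument is correct and, for the forward direction, matches the paper's proof: both use that a compact \(c\) satisfies \(c \ll c\) and then invoke sharpness at \(y = z = c\) (you go through Lemma~\ref{sharp-basis}, the paper goes straight to the definition, but this is cosmetic). Note that the paper's printed proof actually only spells out this forward direction; you also supply the converse via Lemma~\ref{sharp-basis}, which is correct and is presumably what the author had in mind but left implicit. One tiny streamlining: in your converse, once you have \(a \ll b \below x\) you can conclude \(a \ll x\) directly from Lemma~\ref{way-below-basics}(iii) without the detour through compactness of \(a\).
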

\begin{proof}
  Suppose that we sharp element \(x\) and let \(c\) be an arbitrary compact
  element. By compactness of \(c\) and sharpness of \(x\), we have \(c \ll x\)
  or \(c \not\below x\), so \(c \below x\) is decidable, as desired.
  Conversely, suppose that \(c \below x\) is decidable for every compact element
  \(c \in D\). We use Lemma~\ref{sharp-basis} applied to the basis of compact
  elements, so let \(a \ll b\) be compact elements. By the decidability
  assumption, either \(a \below x\) or not. In the first case, we get
  \(a \ll x\) as desired by compactness of \(a\). In the second case, we get
  \(b \not\below x\), completing the proof.
\end{proof}

\begin{proposition}\label{em-sharp}
  Assuming excluded middle, every element of any dcpo is sharp.
  The sharp elements of the Sierpi\'nski domain \(\Sierp\) are exactly \(\bot\)
  and \(\top\).
  Hence, if every element of \(\Sierp\) is sharp, then excluded middle
  follows.
\end{proposition}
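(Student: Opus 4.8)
The plan is to dispatch the three assertions in turn, using the algebraic characterization of sharpness in Proposition~\ref{sharp-algebraic} together with the description of \(\Sierp\) as \(\powerset{\set{*}}\) with compact elements \(\bot = \emptyset\) and \(\top = \set{*}\) from Definition~\ref{def:Sierpinski-domain}.

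For the first assertion, fix a dcpo \(D\), elements \(y \ll z\) in \(D\) and an arbitrary \(x \in D\). Under excluded middle we may decide whether \(z \below x\). If \(z \below x\), then the chain \(y \ll z \below x\) yields \(y \ll x\) by Lemma~\ref{way-below-basics}; if \(z \not\below x\), then that is already the second disjunct. Either way the defining disjunction in the definition of sharpness holds for \(x\), so every element of \(D\) is sharp.

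For the second assertion, I would first check that \(\bot\) and \(\top\) are sharp: by Proposition~\ref{sharp-algebraic} it suffices to decide \(c \below \bot\) and \(c \below \top\) for \(c \in \set{\bot,\top}\), and each of these four statements is plainly true or plainly false. Conversely, suppose \(x \in \Sierp\) is sharp. Then \(\top \below x\) is decidable by Proposition~\ref{sharp-algebraic}, and since \(x \subseteq \set{*}\) the statement \(\top \below x\) is just \(* \in x\). If \(* \in x\), then \(x = \set{*} = \top\); and if \(\lnot(* \in x)\), then no element of \(x\) can exist (any such element would lie in \(\set{*}\), hence equal \(*\)), so \(x = \emptyset = \bot\). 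Thus the sharp elements of \(\Sierp\) are exactly \(\bot\) and \(\top\).

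The third assertion is then immediate: given a proposition \(P\), the element \(\chi_P \coloneqq \set{* \mid P} \in \Sierp\) is sharp by hypothesis, hence \(\chi_P = \bot\) or \(\chi_P = \top\) by the second assertion. But \(\chi_P = \top\) is equivalent to \(P\) and \(\chi_P = \bot\) is equivalent to \(\lnot P\), so \(P \lor \lnot P\), i.e.\ excluded middle holds. I do not expect any real obstacle; the only place care is needed is the constructive step in the second assertion that \(\lnot(* \in x)\) forces \(x\) to be empty, which relies on \(x\) being a subset of the singleton \(\set{*}\), and is otherwise routine.
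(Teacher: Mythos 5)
Your proposal is correct and follows essentially the same route as the paper: decide \(z \below x\) by excluded middle and use \(y \ll z \below x \Rightarrow y \ll x\) for the first claim, invoke Proposition~\ref{sharp-algebraic} (decidability of \(\top \below x\)) to identify the sharp elements of \(\Sierp\) as \(\bot\) and \(\top\), and then apply this to \(\chi_P\) to extract \(P \lor \lnot P\). The extra detail you give in the second step (that \(\lnot(* \in x)\) forces \(x = \emptyset\)) is a correct and constructively harmless elaboration of what the paper leaves implicit.
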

\begin{proof}
  Let \(x\) be an arbitrary element of a dcpo \(D\) and suppose that \(y\) and
  \(z\) are elements of \(D\) with \(y \ll z\). By excluded middle, we have
  \(z \below x\) or \(z \not\below x\). In the latter case, we are done. And if
  \(z \below x\), then \(y \ll z \below x\), so \(y \ll x\).
  By Proposition~\ref{sharp-algebraic}, an element \(x \in \Sierp\) is sharp if
  and only if \(\top = \set{*} \below x\) is decidable, so the sharp elements of
  \(\Sierp\) are exactly the decidable subsets of \(\set{\ast}\). Hence, the
  only sharp elements of \(\Sierp\) are \(\bot = \emptyset\) and \(\top\).
\end{proof}

Sharpness also allows us to prove a converse to
Proposition~\ref{apart-implies-sbelow}:

\begin{proposition}\label{sbelow-implies-apart}
  If \(x\) is a sharp element of a continuous dcpo \(D\), then \(x \sbelow y\)
  implies \(x \apart y\) for every element \(y \in D\).
\end{proposition}
\begin{proof}
  Suppose that \(x\) is a sharp element of a continuous dcpo \(D\) with
  \(x \sbelow y\). We are going to show that \(y \posnotbelow x\) which implies
  \(x \apart y\) by definition.
  By continuity of \(D\) and our assumption \(x \sbelow y\), it suffices to
  prove that \(d \below \bigsqcup\{x\} \cup \{y \mid y \posnotbelow x\}\) for
  every \(d \ll y\).
  Given such \(d \ll y\), we have \(d \ll x\) or \(y \posnotbelow x\) by
  sharpness of \(x\) and Proposition~\ref{sharp-seemingly-stronger}. In either
  case we get the desired inequality, completing the proof.
\end{proof}

We can relate the notion of sharpness to a notion by
\cite{Spitters2010}~and~\cite[Definition~3.5]{Kawai2017} of a located subset:
A subset \(V\) of a poset \(S\) is \emph{located} if for every \(s,t \in S\)
with \(s \ll t\), we have \(t \in V\) or \(s \not\in V\).

\begin{proposition}\label{sharp-iff-located-nbhds}
  An element \(x\) of a continuous dcpo is sharp if and only if the filter of
  Scott open neighbourhoods of \(x\) is located in the poset of Scott opens of
  \(D\).
\end{proposition}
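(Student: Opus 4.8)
The plan is to unwind both sides into statements about the way-below relation on $D$ and then observe they match, using that $\{\upupset b \mid b \in D\}$ is a basis for the Scott topology (Example~\ref{ex:Scott-opens}) and Lemma~\ref{posnotbelow-criterion}. First I would make the notion of "located" concrete for the poset of Scott opens: a subset $V$ of the poset $\opens(D)$ (ordered by inclusion, with the way-below relation $\ll$ of that poset) is located if for all Scott opens $U, W$ with $U \ll W$ in $\opens(D)$, we have $W \in V$ or $U \notin V$. Here $V$ is the filter $\nbhds(x) = \{U \in \opens(D) \mid x \in U\}$ of Scott open neighbourhoods of $x$, so $W \in V$ means $x \in W$ and $U \notin V$ means $x \notin U$.

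Next I would record the key translation: in the continuous dcpo $D$, for basic opens $\upupset a \ll \upupset b$ holds in $\opens(D)$ if and only if $a \ll b$ in $D$ — or at least, I would extract exactly what is needed, namely that $\upupset a \ll \upupset b$ in $\opens(D)$ implies $a \ll b$, and conversely that $a \ll b$ in $D$ gives $\upupset a \ll \upupset b$ (the latter via interpolation). More precisely, since $\opens(D)$ is a continuous lattice with basis $\{\upupset b \mid b \in D\}$, testing locatedness of a filter against all pairs $U \ll W$ is equivalent (by an interpolation argument in $\opens(D)$, cf.\ Lemma~\ref{sharp-basis} applied to the continuous dcpo $\opens(D)$) to testing it against basic opens $\upupset a \ll \upupset b$. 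So locatedness of $\nbhds(x)$ reduces to: for all $a, b \in D$ with $\upupset a \ll \upupset b$, either $x \in \upupset b$ or $x \notin \upupset a$, i.e.\ either $b \ll x$ or $a \not\ll x$.

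Then I would compare this with sharpness. By definition $x$ is sharp iff for all $a, b$ with $a \ll b$ we have $a \ll x$ or $b \not\below x$. Using Lemma~\ref{posnotbelow-criterion}, $b \not\below x$ is (in a continuous dcpo, up to interpolation) interchangeable with the existence of a basic-open separation, and $\upupset a \ll \upupset b$ in $\opens(D)$ corresponds under interpolation to $a \ll b$ in $D$; the clause "$x \notin \upupset a$" is "$a \not\ll x$", which again by interpolation matches up with "$a' \not\below x$" for an interpolant $a'$ with $a' \ll a$. Threading interpolation through both directions gives the equivalence. The main obstacle I expect is bookkeeping the two layers of interpolation cleanly — one inside $D$ and one inside $\opens(D)$ — so that the clauses "$a \ll x$" versus "$a \ll x$" and "$b \not\below x$" versus "$b \not\ll x$" line up without gaps; the safest route is to prove each implication separately: assume $x$ sharp and verify locatedness of $\nbhds(x)$ against basic opens $\upupset a \ll \upupset b$ by first interpolating $a \ll a' \ll b$ and applying sharpness at $a' \ll b$; conversely, assume $\nbhds(x)$ located and, given $a \ll b$ in $D$, interpolate $a \ll a' \ll b' \ll b$ so that $\upupset b' \ll \upupset a'$... (choosing interpolants so the way-below in $\opens(D)$ is available), then feed this into locatedness to conclude $a \ll x$ or $b \not\below x$ via Lemma~\ref{posnotbelow-criterion} and Lemma~\ref{sharp-basis}.
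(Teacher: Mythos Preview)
Your converse direction (located $\Rightarrow$ sharp) is essentially the paper's argument: given $u \ll v$, interpolate to get $u \ll d \ll v$, note that $\upupset d \ll \upupset u$ in $\opens(D)$, and apply locatedness. That part is fine.

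The forward direction has a genuine gap. You claim that $\{\upupset b \mid b \in D\}$ is a basis for $\opens(D)$ as a continuous dcpo, and use this to reduce the locatedness condition to pairs $\upupset a \ll \upupset b$. But this family is \emph{not} a basis in the required sense: given two basic opens $\upupset b_1$ and $\upupset b_2$ way below some $V$, there is in general no single $\upupset b_3$ containing both (that would force $b_3$ to be way below every element of $\upupset b_1 \cup \upupset b_2$, hence below both $b_1$ and $b_2$, which need not exist). Concretely, take $D=\{a,b\}$ discrete: then $\{\upupset a,\upupset b\}=\{\{a\},\{b\}\}$ is not directed below $V=\{a,b\}$. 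So the reduction to a single pair of basic opens fails, and with it the appeal to Lemma~\ref{sharp-basis} for $\opens(D)$.

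What the paper does instead is exactly the finitary version of your idea: from $U \ll V$ and the cover $V \subseteq \bigcup\{\upupset e \mid \exists\,d\in V,\ d\ll e\}$ (made directed by taking finite unions), extract finitely many $e_1,\dots,e_n$ with $U \subseteq \upupset e_1 \cup \dots \cup \upupset e_n$ and witnesses $d_i \in V$ with $d_i \ll e_i$. Then sharpness at each pair $d_i \ll e_i$ gives either some $d_j \ll x$ (so $x\in V$) or $e_i \not\below x$ for all $i$ (so $x\notin U$). In other words, the correct ``basis'' for $\opens(D)$ consists of finite unions of the $\upupset b$'s, and unfolding locatedness against those is precisely the finite-cover argument. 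Your plan becomes correct once you replace ``single basic open'' by ``finite union of basic opens'' in the forward direction.
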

\begin{proof}
  Suppose first that \(x \in D\) is sharp. We show that the filter
  \(\nbhds (x)\) of Scott open neighbourhoods of \(x\) is located in the poset
  of Scott opens of \(D\). So assume that we have Scott opens \(U\)~and~\(V\)
  with \(U \ll V\). We must show that \(x \in V\) or \(x \not\in U\).
  Using continuity of \(D\), interpolation and Scott openness of \(V\), we can
  prove that the inclusion
  \(V \subseteq \bigcup\set{\upupset e \mid e \in D, \exists_{d \in V}\, d \ll
    e}\) holds.
  Hence, since also \(U \ll V\), there are finitely many elements
  \(d_1,d_2,\dots,d_n \in V\) and \(e_1,e_2,\dots,e_n \in D\) with
  \(d_1 \ll e_1 , \dots , d_n \ll e_n\) such that
  \(U \subseteq \upupset e_1 \cup \dots \cup \upupset e_n\).
  For every \(1 \leq i \leq n\), we have \(d_i \ll x\) or \(e_i \not\below x\)
  by sharpness of \(x\). We can distinguish two cases: either \(d_j \ll x\) for
  some \(1 \leq j \leq n\) or \(e_i \not\below x\) for every
  \(1 \leq i \leq n\). If the former holds, then
  \(x \in \upupset d_j \subseteq V\), so \(x \in V\) and we are done. And if the
  latter is the case, then \(x \not\in U\) as we had
  \(U \subseteq \upupset e_1 \cup \dots \cup \upupset e_n\). Thus, \(x \in V\)
  or \(x \not\in U\), as desired.

  Conversely, suppose that \(\nbhds(x)\) is located and let \(u,v \in D\) with
  \(u \ll v\). We must show that \(u \ll x\) or \(v \not\below x\). By
  interpolation, there exists \(d \in D\) with \(u \ll d \ll v\). Then
  \(\upupset d \ll \upupset u\) in the poset of Scott opens of \(D\). Hence, by
  locatedness of \(\nbhds(x)\), we have \(\upupset u \in \nbhds(x)\) or
  \(\upupset d \not\in \nbhds(x)\). Hence, \(u \ll x\) or \(\lnot (d \ll
  x)\). In the first case, we are done immediately. In the second case,
  assuming \(v \below x\) leads to \(d \ll x\), contradicting
  \(\lnot (d \ll x)\), so \(v \not\below x\) and we are finished too.
\end{proof}

The following theorems give examples of sharp elements.
\begin{therm}\label{basis-is-sharp}
  Let \(D\) be a continuous dcpo with a basis \(B\).
  \begin{enumerate}
  \item Assuming that \(D\) is pointed, the least element of \(D\) is sharp if
    \(B\) satisfies \eqref{equals-bot-decidable}.
  \item Every element of \(B\) is sharp if \(B\) satisfies
    \eqref{way-below-decidable} or \eqref{below-decidable}.
    In particular, in these cases, the sharp elements are a Scott dense subset
    of \(D\) in the sense of Proposition~\ref{basis-is-dense}.
  \end{enumerate}

  If \(D\) is algebraic, then we can reverse the implications for the
  basis of compact elements of \(D\):
  \begin{enumerate}[resume*]
  \item Assuming that \(D\) is pointed, the least element of \(D\) is sharp if
    and only if the set of compact elements of \(D\) satisfies
    \eqref{equals-bot-decidable}.
  \item The compact elements of \(D\) are sharp if and only if the set of
    compact elements of \(D\) satisfies \eqref{way-below-decidable}
    or~\eqref{below-decidable}.
  \end{enumerate}
\end{therm}
\begin{proof}
  (1): Suppose first that \(B\) satisfies \eqref{equals-bot-decidable} and let
  \(a,b\in B\) with \(a \ll b\) be arbitrary. By assumption, either \(a = \bot\)
  or \(a \neq \bot\). If \(a = \bot\), then \(a \ll \bot\), by compactness of
  \(\bot\) and we are done. If \(a \neq \bot\), then \(a \not\below \bot\), so
  \(b \not\below \bot\) and we are finished too.
  (2): Suppose that \(x \in B\) and let \(a,b \in B\) with \(a \ll b\).  If
  \(B\) satisfies \eqref{way-below-decidable}, then either \(a \ll x\) or
  \(\lnot (a\ll x)\). In the first case we are done. In the second case we get
  \(b \not\below x\), for if \(b \below x\), then \(a \ll b \below x\), so
  \(a \ll x\), contradicting our assumption.
  If \(B\) satisfies \eqref{below-decidable}, then either \(b \below x\) or
  \(b \not\below x\). In the second case we are done. In the first case we get
  \(a \ll b \below x\), so \(a \ll x\) and we are done too.
  Scott density of the sharp elements now follows at once from
  Proposition~\ref{basis-is-dense}.

  (3): Suppose that \(\bot\) is sharp and let \(c \in D\) be compact. By
  Proposition~\ref{sharp-algebraic}, it is decidable whether \(c \below \bot\)
  holds, so the set of compact elements of \(D\) satisfies
  \eqref{equals-bot-decidable}.
  (4): If every compact element is sharp, then the set of compact elements
  satisfies \eqref{below-decidable} by Proposition~\ref{sharp-algebraic}.
\end{proof}

\begin{therm}\label{sharpness-product-exponential}\hfill
  \begin{enumerate}
  \item An element \((x,y)\) of the product of two dcpos \(D\) and \(E\) is
    sharp if and only if \(x\) and \(y\) are sharp elements of \(D\) and \(E\)
    respectively.
  \item An element \(f \colon D \to E\) of the exponential of two bounded
    complete, algebraic dcpos \(D\) and~\(E\) is sharp if and only if \(f(c)\)
    is sharp in \(E\) for every compact element \(c : D\).
  \end{enumerate}
\end{therm}
\begin{proof}
  (1): Straightforward as the order is component-wise.
  (2): Suppose that \(f \colon D \to E\) is sharp and let \(c \in D\) be an
  arbitrary compact element. By Proposition~\ref{sharp-algebraic} we have to
  show that \(e \below f(c)\) is decidable for every compact element
  \(e \in E\).
  But \(e \below f(c)\) holds exactly when \(\steppa{c \To e} \below f\) by
  Lemma~\ref{step-function-below}, which is decidable by sharpness of \(f\) and
  Proposition~\ref{sharp-algebraic} again.
  For the converse, it suffices, by
  Propositions~\ref{dec-closed-under-exponentials}~and~\ref{sharp-algebraic}, to
  show that \(\steppa{a \To b} \below f\) is decidable for \(a \in D\) and
  \(b \in E\) compact elements.
  But again this reduces to decidability of \(b \below f(a)\) which is implied
  by the assumed sharpness of \(f(a)\).
\end{proof}

Finally, we see that sharp elements are well-behaved with respect to the
intrinsic apartness, because we get tightness and cotransitivity when
restricting to sharp elements.

\begin{therm}\label{sharp-tight-cotransitive}\hfill
  \begin{enumerate}
  \item If \(y\) is a sharp element of a continuous \(D\), then
    \(\lnot (x \posnotbelow y)\) implies \(x \below y\) for every \(x \in D\).
    In particular, the intrinsic apartness on a continuous dcpo \(D\) is tight
    on sharp elements.
  \item The intrinsic apartness on a continuous dcpo \(D\) is cotransitive with
    respect to sharp elements in the following sense: for every \(x,y \in D\)
    and sharp element \(z \in D\), we have
    \(x \apart y \to \pa*{x \apart z \vee y \apart z}\).
  \end{enumerate}
\end{therm}
\begin{proof}
  (1): Suppose that \(y\) is sharp with \(\lnot (x \posnotbelow y)\). We use
  Lemma~\ref{below-in-terms-of-way-below} to prove \(x \below y\). So let
  \(u \in D\) with \(u \ll x\). Using interpolation, there exists \(v \in D\)
  with \(u \ll v \ll x\). So by sharpness of \(y\), we have \(u \ll y\) or
  \(v \not\below y\). If \(u \ll y\), then we are done; and if
  \(v \not\below y\), then \(x \posnotbelow y\), contradicting our assumption.
  Now if \(x\) and \(y\) are both sharp and \(\lnot (x \apart y)\), then
  \(\lnot (x \posnotbelow y)\) and \(\lnot (y \posnotbelow x)\). Hence,
  \(x \below y\) and \(y \below x\) by the above, from which \(x = y\) follows
  using antisymmetry.

  (2): Let \(x,y \in D\) be such that \(x \apart y\) and \(z \in D\) any
  sharp element. Assume without loss of generality that
  \(x \posnotbelow y\). By Lemma~\ref{posnotbelow-criterion} and interpolation,
  there exist \(u,v \in D\) with \(u \ll v \ll x\) and \(u \not\below y\). Since
  \(z\) is sharp, we have \(u \ll z\) or \(v \not\below z\). If
  \(u \ll z\), then by Lemma~\ref{posnotbelow-criterion}, we have
  \(z \posnotbelow y\), so \(z \apart y\) and we are done.  If
  \(v \not\below z\), then \(x \posnotbelow z\) by
  Lemma~\ref{posnotbelow-criterion}, so \(x \apart z\), finishing the proof.
\end{proof}

By Theorem~\ref{sharp-basis}, the basis elements of a continuous dcpo are sharp
in many examples of interest. The following section provides a very different
source of sharp elements.

\section{Strongly maximal elements}\label{sec:strongly-maximal-elements}
\cite{Smyth2006} explored, in a classical setting, the notion of a
\emph{constructively maximal} element, adapted from~\citep{MartinLof1970}. In an
unpublished manuscript, \cite{Heckmann1998} arrived at an equivalent notion,
assuming excluded middle as noted in \cite[Section~8]{Smyth2006}, and called it
\emph{strong maximality}. Whereas Smyth works directly with abstract bases and
rounded ideal completions, we instead work with continuous dcpos. We use a
simplification of Smyth's definition, but follow Heckmann's terminology. We show
that every strongly maximal element is sharp, compare strong maximality to
maximality highlighting connections to sharpness, and study the subspace of
strongly maximal elements.

\begin{definition}[Hausdorff separation]
  Two points \(x\) and \(y\) of a dcpo \(D\) are \emph{Hausdorff separated} if
  we have disjoint Scott open neighbourhoods of \(x\) and \(y\)
  respectively.
\end{definition}

\begin{definition}[Strong maximality]\label{def:strongly-maximal}
  An element \(x\) of a continuous dcpo \(D\) is \emph{strongly maximal} if for
  every \(u,v \in D\) with \(u \ll v\), we have \(u \ll x\) or \(v\) and \(x\)
  are Hausdorff separated.
\end{definition}

The following gives another source of examples of sharp elements.
\begin{proposition}\label{sharp-if-strongly-maximal}
  Every strongly maximal element of a continuous dcpo is sharp.
\end{proposition}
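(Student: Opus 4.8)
The plan is to unwind both definitions and observe that strong maximality delivers, essentially verbatim, the disjunction required for sharpness, with the Hausdorff-separation alternative being slightly stronger than the clause \(z \not\below x\) that the definition of sharpness asks for.

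First I would fix a strongly maximal element \(x\) of a continuous dcpo \(D\) and take arbitrary \(y,z \in D\) with \(y \ll z\); the goal is to establish \(y \ll x\) or \(z \not\below x\). Applying the definition of strong maximality to the pair \(y \ll z\) (that is, instantiating \(u \coloneqq y\) and \(v \coloneqq z\)), we obtain that either \(y \ll x\), in which case we are immediately done, or else \(z\) and \(x\) are Hausdorff separated. In the latter case I would extract disjoint Scott opens \(U\) and \(V\) with \(z \in U\) and \(x \in V\), and it then remains only to derive \(z \not\below x\). Suppose, toward a contradiction, that \(z \below x\); since \(U\) is Scott open it is an upper set, so from \(z \in U\) and \(z \below x\) we get \(x \in U\), whence \(x \in U \cap V = \emptyset\), a contradiction. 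Therefore \(z \not\below x\), as required.

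I do not anticipate a real obstacle: the argument is a short chase through the definitions, using nothing beyond the fact that Scott opens are upper sets, and in particular needing neither interpolation nor any further appeal to continuity of \(D\). The only point worth flagging is that the same reasoning actually yields the stronger conclusion \(z \posnotbelow x\) — since \(x \notin U\) — matching the \({\posnotbelow}\)-reformulation of sharpness recorded in the proposition just before Lemma~\ref{sharp-basis}; but this refinement is not needed for the statement as phrased.
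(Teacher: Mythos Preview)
Your proof is correct and matches the paper's own argument essentially verbatim: apply strong maximality to the given pair, and in the Hausdorff-separated case use that Scott opens are upper sets to conclude \(z \not\below x\). Your additional observation that one in fact obtains \(z \posnotbelow x\) is a nice bonus, though (as you note) not needed here.
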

\begin{proof}
  Let \(x\) be a strongly maximal element of a continuous dcpo \(D\) and let
  \(u,v\in D\) be such that \(u \ll v\). By strong maximality, we have
  \(u \ll x\) or \(v\) and \(x\) are Hausdorff separated. In the first case we
  are done immediately. And if \(v\) and \(x\) are Hausdorff separated, then
  \(v \not\below x\), because Scott opens are upper sets. Hence, \(x\) is sharp,
  as desired.
\end{proof}

\begin{corollary}\hfill
  \begin{enumerate}
  \item The intrinsic apartness on a continuous dcpo \(D\) is tight on strongly
    maximal elements.
  \item The intrinsic apartness on a continuous dcpo \(D\) is cotransitive with
    respect to strongly maximal elements in the following sense: for every
    \(x,y \in D\) and strongly maximal element \(z \in D\), we have
    \({x \apart y \to \pa*{x \apart z \vee y \apart z}}\).
  \end{enumerate}
\end{corollary}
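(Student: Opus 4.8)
The plan is to derive both clauses directly from the two results immediately preceding the corollary, namely Proposition~\ref{sharp-if-strongly-maximal} (every strongly maximal element of a continuous dcpo is sharp) and Theorem~\ref{sharp-tight-cotransitive} (the intrinsic apartness on a continuous dcpo is tight on sharp elements and is cotransitive with respect to sharp elements). Since being strongly maximal is strictly stronger than being sharp, everything proved about sharp elements transfers verbatim to strongly maximal elements, so the corollary is essentially a specialization and requires no new ideas.

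For clause~(i): I would take any two strongly maximal elements \(x\) and \(y\) of \(D\). By Proposition~\ref{sharp-if-strongly-maximal}, both \(x\) and \(y\) are sharp, and then Theorem~\ref{sharp-tight-cotransitive}(i) applies to give that \(\lnot(x \apart y)\) implies \(x = y\); this is precisely tightness of \({\apart}\) restricted to the strongly maximal elements. For clause~(ii): I would take \(x, y \in D\) and a strongly maximal \(z \in D\). Again Proposition~\ref{sharp-if-strongly-maximal} tells us \(z\) is sharp, so Theorem~\ref{sharp-tight-cotransitive}(ii) immediately yields \(x \apart y \to \pa*{x \apart z \vee y \apart z}\), which is exactly the stated cotransitivity condition.

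I expect no genuine obstacle here, as the argument is a one-line invocation of the earlier results; the only thing worth stating carefully is the precise reading of "cotransitive with respect to" (the third point need only be sharp/strongly maximal, not all three), which matches the formulation already used in Theorem~\ref{sharp-tight-cotransitive}(ii). Accordingly I would keep the proof to a single sentence or two, simply citing Proposition~\ref{sharp-if-strongly-maximal} to reduce strong maximality to sharpness and then Theorem~\ref{sharp-tight-cotransitive} for the two conclusions.
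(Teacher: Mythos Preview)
Your proposal is correct and matches the paper's proof exactly: the paper's argument is the single line ``By Proposition~\ref{sharp-if-strongly-maximal} and Theorem~\ref{sharp-tight-cotransitive}.''
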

\begin{proof}
  By Proposition~\ref{sharp-if-strongly-maximal} and
  Theorem~\ref{sharp-tight-cotransitive}.
\end{proof}

Similarly to sharpness, the condition of Definition~\ref{def:strongly-maximal}
can be restricted to the basis and simplifies in the case of algebraic dcpos:

\begin{lemma}\label{strongly-maximal-basis}
  If a continuous dcpo \(D\) has a basis \(B\), then an element \(x \in D\) is
  strongly maximal if~and~only~if for every \(a,b \in B\) with \(a \ll b\),
  we have \(a \ll x\) or \(b\) and \(x\) are Hausdorff separated.
\end{lemma}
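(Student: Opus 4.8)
The statement is the natural "it suffices to check on a basis" reduction for strong maximality, exactly parallel to Lemma~\ref{sharp-basis} for sharpness. The plan is to prove both implications directly, using the interpolation property (Proposition~\ref{interpolation}) to move between arbitrary elements and basis elements. The left-to-right direction is trivial: if $x$ is strongly maximal, then the defining condition holds for \emph{all} $u,v \in D$ with $u \ll v$, in particular for all $a,b \in B$ with $a \ll b$, so there is nothing to do.

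For the converse, assume the basis condition and let $y,z \in D$ be arbitrary with $y \ll z$; I must show $y \ll x$ or $z$ and $x$ are Hausdorff separated. Here I would apply interpolation \emph{twice}, together with the "moreover" clause of Proposition~\ref{interpolation} that lets me land the interpolants in the basis $B$: from $y \ll z$ I get $a,b \in B$ with $y \ll a \ll b \ll z$. Now the basis hypothesis applies to the pair $a \ll b$, giving $a \ll x$ or $b$ and $x$ are Hausdorff separated. In the first case, $y \ll a \ll x$ yields $y \ll x$ by transitivity of $\ll$ (Lemma~\ref{way-below-basics}), and we are done. In the second case, we have disjoint Scott opens $U \ni b$ and $W \ni x$; but since $b \below z$ and $U$ is an upper set, $z \in U$ as well, so the very same $U$ and $W$ witness that $z$ and $x$ are Hausdorff separated. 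This mirrors the structure of the proof of Lemma~\ref{sharp-basis} almost verbatim.

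The only step requiring a moment's thought is the second case of the converse: one needs to observe that a Scott open neighbourhood of $b$ is automatically a Scott open neighbourhood of every element above $b$, which is immediate since Scott opens are upper sets and $b \below z$. This is the analogue of the step "$z \not\below x$ since $b \below z$" in Lemma~\ref{sharp-basis}, and it is where the choice of interpolating in the basis (rather than just once) pays off—we need $b$ strictly below $z$ in the order so that the neighbourhood transports upward. I do not anticipate any genuine obstacle; the proof is a routine double application of interpolation plus an upper-set observation.
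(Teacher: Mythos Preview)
Your proposal is correct and essentially identical to the paper's proof: both directions are handled the same way, with the converse obtained by interpolating twice in the basis to get \(u \ll a \ll b \ll v\) (your \(y \ll a \ll b \ll z\)), then using that Scott opens are upper sets to transport the Hausdorff separation from \(b\) up to \(v\). The paper's proof is slightly terser but makes exactly the same moves.
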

\begin{proof}
  The left-to-right implication is immediate. Conversely, suppose that \(x\)
  satisfies the criterion in the lemma and let \(u,v \in D\) with \(u \ll
  v\). Using interpolation twice, there are \(a,b \in B\) such that
  \(u \ll a \ll b \ll v\). By assumption, we have \(a \ll x\) or \(b\) and \(x\)
  are Hausdorff separated. If \(a \ll x\), then \(u \ll a \ll x\) and we are
  done. And if \(b\) and \(x\) are Hausdorff separated, then so are \(v\) and
  \(x\), because Scott opens are upper sets.
\end{proof}
\begin{lemma}\label{strongly-maximal-algebraic}
  An element \(x\) of an algebraic dcpo \(D\) is strongly maximal if and only if
  for every compact element \(c \in D\), either \(c \below x\) or \(c\) and \(x\) are
  Hausdorff separated.
\end{lemma}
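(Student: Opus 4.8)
The plan is to reduce the statement to Lemma~\ref{strongly-maximal-basis} applied to the basis of compact elements of $D$, mirroring the proof of Proposition~\ref{sharp-algebraic}.

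First I would handle the forward implication: assuming $x$ is strongly maximal, I take an arbitrary compact $c \in D$ and note $c \ll c$, so Definition~\ref{def:strongly-maximal} applied with $u = v = c$ yields $c \ll x$ or that $c$ and $x$ are Hausdorff separated. Since $c \ll x$ implies $c \below x$, this is exactly the required disjunction.

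For the converse, I would let $B$ denote the (unique smallest) basis of compact elements of $D$, which exists because $D$ is algebraic. By Lemma~\ref{strongly-maximal-basis} it is enough to verify the criterion for $a, b \in B$ with $a \ll b$. Fixing such $a$ and $b$, the hypothesis applied to the compact element $b$ gives $b \below x$ or that $b$ and $x$ are Hausdorff separated; in the second case we are done, and in the first case $a \ll b \below x$ gives $a \ll x$ by Lemma~\ref{way-below-basics}.

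I do not expect any genuine obstacle here: the argument is essentially bookkeeping. The only subtlety worth flagging is that for a compact element the relations $c \ll x$ and $c \below x$ agree, which is precisely what allows a hypothesis stated with $\below$ to be fed into the way-below-indexed criterion of Lemma~\ref{strongly-maximal-basis}.
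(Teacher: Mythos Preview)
Your proposal is correct and follows essentially the same route as the paper: both reduce the converse to Lemma~\ref{strongly-maximal-basis} on the basis of compact elements. The only difference is that you apply the hypothesis to the compact element \(b\) (so the Hausdorff-separated case is immediate and the other case uses \(a \ll b \below x \Rightarrow a \ll x\)), whereas the paper applies it to \(a\) (so \(a \ll x\) is immediate from compactness, while the Hausdorff-separated case needs the observation that Scott opens are upper sets to pass from \(a\) to \(b\)); both variants are equally valid.
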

\begin{proof}
  The left-to-right implication is clear. So suppose that \(x\) satisfies the
  condition in the lemma. We~use Lemma~\ref{strongly-maximal-basis} on the basis
  \(B\) of compact elements. So suppose that we have \(a,b\in B\) with
  \(a \ll b\). By assumption and compactness of \(a\), either \(a \ll x\) or
  \(a\) and \(x\) are Hausdorff separated. If \(a \ll x\), then we are done. And
  if \(a\) and \(x\) are Hausdorff separated, then so are \(b\) and \(x\),
  because Scott opens are upper sets.
\end{proof}

Smyth's formulation of strong maximality~\cite[Definition~4.1]{Smyth2006}
(called \emph{constructive maximality} there) is somewhat more involved than
ours, but it is equivalent, as Proposition~\ref{Smyth-strong-equiv} shows.

\begin{definition}[Refinement \(x \refined y\)]\label{def:refine}
  We say that two elements \(x\) and \(y\) of a dcpo \(D\) can be
  \emph{refined}, written \(x \refined y\), if there exists \(z \in D\) with
  \(x \ll z\) and \(y \ll z\).
\end{definition}
On~\cite[p.~362]{Smyth2006}, refinement is denoted by
\(x \mathrel{\uparrow} y\), but we prefer \(x \refined y\), because one might
want to reserve \(x \mathrel{\uparrow} y\) for the weaker statement that there
exists \(z \in D\) with \(x,y \below z\).

\begin{lemma}\label{Hausdorff-separated-criterion}
  Two elements \(x\) and \(y\) of a continuous dcpo \(D\) are Hausdorff
  separated if and only if there exist
  \(a,b \in D\) with \(a \ll x\) and \(b \ll y\) such that
  \(\lnot \pa*{a \refined b}\).
  Moreover, if \(D\) has a basis \(B\), then \(x\) and \(y\) are Hausdorff
  separated if and only if there exist such elements \(a\) and \(b\) in \(B\).
\end{lemma}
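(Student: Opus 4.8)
The plan is to prove both directions directly, using that in a continuous dcpo the sets $\upupset a$ are Scott open (Example~\ref{ex:Scott-opens}) and, when a basis $B$ is present, that $\set{\upupset b \mid b \in B}$ is a basis for the Scott topology (Lemma~\ref{basis-basis}).

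For the left-to-right direction, suppose $x$ and $y$ are Hausdorff separated, say by disjoint Scott opens $U \ni x$ and $V \ni y$. Since $\set{\upupset b \mid b \in B}$ is a basis for the Scott topology, there are $a,b \in B$ with $x \in \upupset a \subseteq U$ and $y \in \upupset b \subseteq V$; in particular $a \ll x$ and $b \ll y$. I claim $\lnot(a \refined b)$: if there were $z \in D$ with $a \ll z$ and $b \ll z$, then $z \in \upupset a \subseteq U$ and $z \in \upupset b \subseteq V$, so $z \in U \cap V$, contradicting disjointness. (When no basis is specified, take $B = D$, which recovers the first statement.)

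For the right-to-left direction, suppose we have $a,b \in D$ with $a \ll x$, $b \ll y$ and $\lnot(a \refined b)$. By Example~\ref{ex:Scott-opens} (using continuity of $D$), the sets $\upupset a$ and $\upupset b$ are Scott open, and they contain $x$ and $y$ respectively. They are disjoint: a common element $z \in \upupset a \cap \upupset b$ would satisfy $a \ll z$ and $b \ll z$, i.e.\ $a \refined b$, contradicting our hypothesis. Hence $\upupset a$ and $\upupset b$ witness that $x$ and $y$ are Hausdorff separated, and the witnesses $a,b$ lie in $B$ whenever the original ones did.

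There is no real obstacle here: the argument is a routine unfolding of the definitions, the only points requiring care being the appeal to continuity (so that $\upupset a$, $\upupset b$ are open in the right-to-left direction) and the appeal to Lemma~\ref{basis-basis} (so that the separating opens can be shrunk to basic opens $\upupset a$, $\upupset b$ in the left-to-right direction), which is also what delivers the ``moreover'' clause.
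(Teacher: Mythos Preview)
Your proof is correct and essentially identical to the paper's: both directions use that \(\upupset a \cap \upupset b = \emptyset\) is precisely \(\lnot(a \refined b)\), together with Lemma~\ref{basis-basis} to shrink separating opens to basic ones of the form \(\upupset a\), \(\upupset b\) in the forward direction, and Example~\ref{ex:Scott-opens} to know these are Scott open in the converse. The paper likewise handles the non-basis case by taking \(B = D\).
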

\begin{proof}
  Suppose that \(D\) has a basis \(B\) (which may be all of \(D\)) and let \(x\)
  and \(y\) be Hausdorff separated elements of \(D\). Then there are two
  disjoint basic opens containing \(x\) and \(y\) respectively. Hence, by
  Lemma~\ref{basis-basis}, there exist \(a,b \in B\) such that
  \(x \in \upupset a\), \(y \in \upupset b\) and
  \(\upupset a \cap \upupset b = \emptyset\). The latter says exactly that
  \(\lnot \pa*{a \refined b}\).
  Conversely, if we have \(a,b\in B\) with \(a \ll x\), \(b \ll y\) and
  \(\lnot \pa*{a \refined b}\), then \(\upupset a\) and \(\upupset b\) are
  disjoint Scott opens containing \(x\)~and~\(y\) respectively.
\end{proof}
In \cite[p.~362]{Smyth2006}, the condition in
Lemma~\ref{Hausdorff-separated-criterion} is taken as a definition (for basis
elements) and such elements \(a\)~and~\(b\) are said to lie apart and this
notion is denoted by \(a \mathrel{\sharp} b\).
We now translate Definition 4.1 of \citep{Smyth2006} from ideal completions
of abstract bases to continuous dcpos.
\begin{definition}[Smyth maximality]
  An element \(x\) of a continuous dcpo \(D\) is \emph{Smyth maximal} if for
  every \(u,v \in D\) with \(u \ll v\), there exists \(d \ll x\) such that
  \(u \ll d\) or the condition in Lemma~\ref{Hausdorff-separated-criterion}
  holds for \(v\) and \(d\).
\end{definition}

\begin{proposition}\label{Smyth-strong-equiv}
  An element \(x\) of a continuous \(D\) is strongly maximal if and only if
  \(x\) is Smyth maximal.
\end{proposition}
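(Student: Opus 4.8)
The plan is to prove both implications directly from the two "basis" characterizations we already have: Lemma~\ref{strongly-maximal-basis} for strong maximality and Lemma~\ref{Hausdorff-separated-criterion} for Hausdorff separation. The key observation is that both Smyth maximality and strong maximality say "for every $u \ll v$, \emph{either} $u$ (or something way below $x$ and above $u$) is way below $x$, \emph{or} $v$ and $x$ are separated", and the only real difference is whether the separation condition is phrased via an explicit witness $d \ll x$ or directly via the Hausdorff-separation predicate. Lemma~\ref{Hausdorff-separated-criterion} tells us the Hausdorff-separation predicate for $v$ and $x$ is \emph{itself} equivalent to the existence of witnesses $a \ll v$, $d \ll x$ with $\lnot(a \refined d)$, which is essentially the disjunct appearing in Smyth maximality. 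So the two notions should match up almost by unfolding definitions, once interpolation is used to bridge the "$\exists d \ll x$" in Smyth maximality against the "$u \ll x$" in strong maximality.

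First I would prove that strong maximality implies Smyth maximality. Assume $x$ is strongly maximal and let $u \ll v$. By Definition~\ref{def:strongly-maximal}, either $u \ll x$ or $v$ and $x$ are Hausdorff separated. In the first case, interpolate to get $d$ with $u \ll d \ll x$; then $u \ll d$ and $d \ll x$, giving the first disjunct of Smyth maximality. In the second case, by Lemma~\ref{Hausdorff-separated-criterion} there exist $a \ll v$ and $d \ll x$ with $\lnot(a \refined d)$; then $d \ll x$ and the condition in Lemma~\ref{Hausdorff-separated-criterion} holds for $v$ and $d$ (witnessed by $a$ and $d$ themselves, since $a \ll v$, $d \ll d$ by... — actually one wants $d \ll d$, which need not hold, so instead interpolate once more: pick $d' \ll x$ with $d \ll d' \ll x$ would go the wrong way; better to note that Lemma~\ref{Hausdorff-separated-criterion} applied to $v$ and $d$ asks for $a' \ll v$ and $b' \ll d$ with $\lnot(a' \refined b')$, and one takes $a' = a$ and interpolates $b'$ with $b' \ll d$... this needs care, see below).

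Conversely, suppose $x$ is Smyth maximal and let $u \ll v$; I want to apply Lemma~\ref{strongly-maximal-basis}, so I may even assume $u, v$ lie in a basis, though working in $D$ directly is fine. Interpolate $u \ll w \ll v$ and apply Smyth maximality to $w \ll v$: there is $d \ll x$ with $w \ll d$, or the condition of Lemma~\ref{Hausdorff-separated-criterion} holds for $v$ and $d$. In the first case $u \ll w \ll d \ll x$ gives $u \ll x$. In the second case, that condition gives $a \ll v$ and $b \ll d$ with $\lnot(a \refined b)$; since $b \ll d \ll x$ we have $b \ll x$, so by Lemma~\ref{Hausdorff-separated-criterion} (applied to $v$ and $x$) with witnesses $a$ and $b$, the elements $v$ and $x$ are Hausdorff separated. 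Either way the hypothesis of Definition~\ref{def:strongly-maximal} (or Lemma~\ref{strongly-maximal-basis}) is met.

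The main obstacle I anticipate is the bookkeeping in the strong-maximal-implies-Smyth-maximal direction: the Smyth condition for "$v$ and $d$" is \emph{not} literally "$v$ and $d$ are Hausdorff separated" but the unfolded witness form of Lemma~\ref{Hausdorff-separated-criterion}, and matching the witness $d \ll x$ produced there against the quantifier "$\exists d \ll x$" requires interpolating carefully so that the element named $d$ simultaneously satisfies $d \ll x$ \emph{and} serves as a legitimate second witness (i.e.\ something way above which there is another basis element). Concretely, from Hausdorff separation of $v$ and $x$ I get $a \ll v$, $c \ll x$ with $\lnot(a \refined c)$; I then interpolate $c \ll d \ll x$, and use $a \ll v$ together with $c \ll d$ and $\lnot(a \refined c)$ — but $\lnot(a \refined c)$ with $c \ll d$ does \emph{not} immediately give the Lemma~\ref{Hausdorff-separated-criterion} predicate for $v$ and $d$, which wants $\lnot(a' \refined c')$ with $c' \ll d$; taking $c' = c$ works since $c \ll d$, and $a' = a \ll v$, so we need $\lnot(a \refined c)$, which is exactly what we have. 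So in fact $d = c$ need not be interpolated at all: take $d$ to be any interpolant with $c \ll d \ll x$, witnesses $a$ and $c$. This resolves the difficulty, and the rest is routine unfolding.
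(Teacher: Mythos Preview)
Your proposal is correct and follows essentially the same route as the paper. The only cosmetic difference is in the Hausdorff-separation case of the strong-implies-Smyth direction: you extract witnesses $a \ll v$, $c \ll x$ via Lemma~\ref{Hausdorff-separated-criterion} and then interpolate $c \ll d \ll x$, whereas the paper works directly with the separating Scott opens $V \ni v$, $U \ni x$, picks any $d \ll x$ with $d \in U$ (using continuity and Scott openness of $U$), and observes that $V$ and $U$ already separate $v$ and $d$; also, your interpolation $u \ll w \ll v$ in the Smyth-implies-strong direction is unnecessary (apply Smyth maximality directly to $u \ll v$), but harmless.
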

\begin{proof}
  The right-to-left implication is straightforward thanks to
  Lemma~\ref{Hausdorff-separated-criterion}. So suppose that \(x\) is strongly
  maximal and that we have \(u \ll v\). Then \(u \ll x\) holds or \(v\) and
  \(x\) are Hausdorff separated.
  Suppose first that \(u \ll x\). Then by interpolation there exists \(d \in D\)
  with \(u \ll d \ll x\) and we are done.
  Now suppose that \(v\) and \(x\) are Hausdorff separated. Then we have
  disjoint Scott opens \(V\) and \(U\) containing \(v\) and \(x\) respectively.
  By continuity of \(D\) and Scott openness of \(U\), there exists \(d \ll x\)
  with \(d \in U\). But the opens \(V\) and \(U\) then witness Hausdorff
  separation of \(v\) and \(d\) too, so that
  Lemma~\ref{Hausdorff-separated-criterion} finishes the proof.
\end{proof}

Finally, we how strong maximality is preserved by taking products and
exponentials.

\begin{proposition}\label{strong-maximality-product-exponential}\hfill
  \begin{enumerate}
  \item An element \((x,y)\) of the product of two dcpos \(D\) and \(E\) is
    strongly maximal if and only if both \(x\) and \(y\) are strongly maximal
    elements of \(D\) and \(E\) respectively.
  \item An element \(f \colon D \to E\) of the exponential of two bounded
    complete, algebraic dcpos \(D\) and~\(E\) is strongly maximal if \(f(c)\) is
    strongly maximal in \(E\) for every compact element \(c : D\).
  \end{enumerate}
\end{proposition}
\begin{proof}
  (1) is easily verified, so we only give the details for (2). Suppose that
  \(f(c)\) is strongly maximal for every compact element \(c \in D\). We show
  that \(f\) is strongly maximal in the exponential~\(E^D\). By
  Lemma~\ref{strongly-maximal-algebraic} and
  Proposition~\ref{dec-closed-under-exponentials}, it is enough to prove that
  for every two compact elements \(a \in D\) and \(b \in E\), we have
  \(\steppa{a \To b} \below f\) or \(\steppa{a \To b}\) and \(f\) can be
  Hausdorff separated.
  Given such elements \(a\) and \(b\), we have either \(b \below f(a)\) or \(b\)
  and \(f(a)\) can be Hausdorff separated by strong maximality of \(f(a)\).
  In the first case we get \(\steppa{a \To b} \below f\) by
  Lemma~\ref{step-function-below}, as desired.
  In the second case, we get, using Lemma~\ref{basis-basis}, compact elements
  \(c,d \in E\) such that \(f(a) \in \upupset c\) and \(b \in \upupset d\) while
  \(\upupset c \cap \upupset d = \emptyset\).
  But now \(\upupset \steppa{a \To c}\) and \(\upupset \steppa{a \To d}\) are
  disjoint open neighbourhoods of respectively \(f\) and \(\steppa{a \To b}\) in
  \(E^D\), finishing the proof.
\end{proof}

Note that, unlike in Theorem~\ref{sharpness-product-exponential}, the
converse to Item (2) of Proposition~\ref{strong-maximality-product-exponential}
does not hold: the identity on \(\lifting(\Nat)\) is strongly maximal, but
\(\bot\) is not. Here \(\lifting(\Nat)\) denotes the free pointed dcpo on the
set of natural numbers (see Section~\ref{lifting-Cantor}), which, classically,
is simply the flat dcpo \(\Nat \cup \{\bot\}\).

\subsection{Maximality and strong maximality}\label{subsec:max-strong-max}
The name strong maximality is justified by the following observation:
\begin{proposition}[cf.\ Proposition~4.2 in \citep{Smyth2006}]\label{maximal-if-strongly-maximal}
  Every strongly maximal element of a continuous dcpo \(D\) is maximal, i.e.\ if
  \(x \in D\) is strongly maximal, then \(x \below y\) implies \(x = y\) for
  every element \(y \in D\).
\end{proposition}
\begin{proof}
  Let \(x\) be a strongly maximal element of a continuous \(D\) and suppose that
  we have \(y \in D\) with \(x \below y\). We need to prove that \(y \below x\).
  We do so using Lemma~\ref{below-in-terms-of-way-below}. So let \(u \in D\) be
  such that \(u \ll y\). Our goal is to prove that \(u \ll x\). By strong
  maximality of \(x\), we have \(u \ll x\) or \(x\) and \(y\) are Hausdorff
  separated. In the first case we are done, while the second case is impossible,
  because \(x \below y\) and Scott opens are upper sets.
\end{proof}

In the presence of excluded middle, \cite[Corollary~4.4]{Smyth2006} tells us
that the converse of Proposition~\ref{maximal-if-strongly-maximal} is true if
and only if the {Lawson~condition}~\citep{Smyth2006,Lawson1997} holds for the
dcpo (the Scott and Lawson topologies coincide on the subset of maximal
elements).
Without excluded middle, the situation is subtler and involves sharpness, as we
show in Theorem~\ref{Lawson-strongly-maximal}.

The following lemma is a technical device for constructing maximal elements
that, constructively, fail to be strongly maximal. Besides
Proposition~\ref{maximal-strongly-maximal-wem-alt}, it also finds application in
Propositions~\ref{seq-strongly-max-wem} and \ref{Dedekind-max-strong-max-wem}.

\begin{lemma}\label{maximal-strongly-maximal-wem}
  Suppose that we have elements \(x\) and \(y\) of a continuous dcpo \(D\) such
  that
  \begin{enumerate}
  \item\label{both-strongly-max} \(x\) and \(y\) are both strongly maximal,
  \item\label{glb} \(x\) and \(y\) have a greatest lower bound \(x \glb y\) in \(D\), and
  \item\label{apart} \(x\) and \(y\) are intrinsically apart.
  \end{enumerate}
  Then, for any proposition \(P\), the supremum \(\dirsup S\) of the directed
  subset
  \[
    S \coloneqq \set{x \glb y} \cup \set{x \mid \lnot P} \cup \set{y \mid
      \lnot\lnot P}
  \] is maximal, but \(\dirsup S\) is sharp if and only if \(\lnot P\) is
  decidable.  Hence, \(\dirsup S\) is maximal, but if \(\dirsup S\) is strongly
  maximal, then \(\lnot P\) is decidable.
\end{lemma}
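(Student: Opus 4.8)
The plan is to prove, in order, that \(\dirsup S\) is maximal, that \(\dirsup S\) is sharp exactly when \(\lnot P\) is decidable, and then the final consequence. First I record the basic facts about \(S\). It is inhabited, containing \(x \glb y\), and semidirected: the only pairs that need checking involve \(x \glb y\), which lies below both \(x\) and \(y\), while \(x\) and \(y\) cannot both belong to \(S\) since \(\lnot P\) and \(\lnot\lnot P\) are jointly contradictory. Hence \(S\) is directed and \(\dirsup S\) exists. Moreover, if \(\lnot P\) holds then \(\set{y \mid \lnot\lnot P}\) is empty and \(S = \set{x \glb y, x}\), so \(\dirsup S = x\); dually, if \(\lnot\lnot P\) holds then \(\dirsup S = y\). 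Intuitively, \(\dirsup S\) is \(\lnot\lnot\)-equal to \(x\) or to \(y\).

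For maximality, suppose \(\dirsup S \below z\). By continuity \(z = \dirsup \ddset z\), so if \(w \below \dirsup S\) for every \(w \ll z\), then \(\dirsup S\) is an upper bound of \(\ddset z\), whence \(z \below \dirsup S\) and, by antisymmetry, \(z = \dirsup S\); as \(z\) was arbitrary this makes \(\dirsup S\) maximal. So fix \(w \ll z\) and apply strong maximality (Definition~\ref{def:strongly-maximal}) of \(x\) and of \(y\) to the pair \(w \ll z\), yielding four cases. If \(w \ll x\) and \(w \ll y\), then \(w \below x\) and \(w \below y\), so \(w \below x \glb y \below \dirsup S\) --- this is the only place the greatest-lower-bound hypothesis is used. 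If \(w \ll x\) but \(z\) and \(y\) are Hausdorff separated, then \(y \not\below z\) (the Scott open neighbourhood of \(y\), being an upper set, would otherwise contain \(z\) and meet the neighbourhood of \(z\)), so \(\lnot\lnot P\) would give \(y = \dirsup S \below z\), a contradiction; hence \(\lnot P\), so \(\dirsup S = x\) and \(w \below x = \dirsup S\). The case ``\(z\) and \(x\) Hausdorff separated, \(w \ll y\)'' is symmetric and yields \(\lnot\lnot P\), hence \(\dirsup S = y\) and \(w \below y = \dirsup S\). Finally, if \(z\) is Hausdorff separated from both \(x\) and \(y\), the previous two arguments give \(\lnot P\) and \(\lnot\lnot P\), a contradiction, so this case is vacuous. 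In every case \(w \below \dirsup S\).

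For the equivalence, the right-to-left direction is immediate: if \(\lnot P\) is decidable then \(\dirsup S\) is \(x\) or \(y\), each of which is sharp by Proposition~\ref{sharp-if-strongly-maximal}. For the left-to-right direction, assume \(\dirsup S\) is sharp. Since \(x \apart y\), either \(x \posnotbelow y\) or \(y \posnotbelow x\); I treat the former, the latter being analogous. By Lemma~\ref{posnotbelow-criterion} there is \(b \ll x\) with \(b \not\below y\). Sharpness of \(\dirsup S\) applied to the pair \(b \ll x\) gives \(b \ll \dirsup S\) or \(x \not\below \dirsup S\). In the first case \(b \below s\) for some \(s \in S\); the possibility \(s = x \glb y\) gives \(b \below x \glb y \below y\) and the possibility \(s = y\) (which needs \(\lnot\lnot P\)) gives \(b \below y\), both contradicting \(b \not\below y\), so the only surviving option is \(s = x\) with \(\lnot P\); in particular \(\lnot P\) holds. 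In the second case, \(\lnot P\) would give \(\dirsup S = x \below \dirsup S\), contradicting \(x \not\below \dirsup S\), so \(\lnot\lnot P\). Either way \(\lnot P\) or \(\lnot\lnot P\), i.e.\ \(\lnot P\) is decidable. The final claim follows: a strongly maximal \(\dirsup S\) is sharp by Proposition~\ref{sharp-if-strongly-maximal}, hence \(\lnot P\) is decidable.

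I expect the maximality of \(\dirsup S\) to be the main obstacle. A direct argument only establishes \(\lnot\lnot(\dirsup S \text{ is maximal})\), which is useless since \(D\) need not be \(\lnot\lnot\)-separated; the key is to reduce \(z \below \dirsup S\) to the way-below approximants of \(z\) via continuity and then to notice that each Hausdorff separation of \(z\) from \(x\) or from \(y\) is already strong enough to decide the relevant double negation, thereby identifying \(\dirsup S\) with \(x\) or with \(y\) exactly.
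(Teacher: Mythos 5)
Your proof is correct and follows essentially the same route as the paper: the same four-case analysis via strong maximality of \(x\) and \(y\) for maximality, and the same use of Lemma~\ref{posnotbelow-criterion} together with sharpness of \(\dirsup S\) (plus Proposition~\ref{sharp-if-strongly-maximal} for the converse and the final claim). The only cosmetic difference is that you reduce \(z \below \dirsup S\) by showing \(\dirsup S\) is an upper bound of \(\ddset z\) and using \(z = \dirsup \ddset z\), where the paper instead invokes Lemma~\ref{below-in-terms-of-way-below} and an interpolation step; both are fine.
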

\begin{proof}
  We start by showing that \(\dirsup S\) is maximal. Suppose that we have
  \(z \in D\) with \(\dirsup S \below z\). We use
  Lemma~\ref{below-in-terms-of-way-below} to show that \(z \below \dirsup
  S\). So let \(u \in D\) be such that \(u \ll z\). Our goal is to show that
  \(u \below \dirsup S\).
  By strong maximality of \(x\), we have \(u \ll x\) or \(z\) and \(x\) are
  Hausdorff separated. As \(y\) is also strongly maximal, we have \(u \ll y\) or
  \(z\) and \(y\) are Hausdorff separated. We thus distinguish four cases:
  \begin{itemize}
  \item (\(u \ll x\) and \(u \ll y\)): In this case, \(u \below {x \glb y}\) by
    assumption~(2). Hence, \(u \below {x \glb y} \below \dirsup S\), and we are
    done in this case.
  \item (\(u \ll x\) and \(z\) and \(y\) are Hausdorff separated): We claim that
    \(\lnot P\) must hold in this case. For suppose that \(P\) were true. Then
    \(y = \dirsup S \below z\), contradicting that \(z\) and \(y\) are Hausdorff
    separated. Thus, \(\lnot P\) holds. But then \(\dirsup S = x\) and
    \(u \ll x = \dirsup S\), so \(u \below \dirsup S\), as wished.
  \item (\(z\) and \(x\) are Hausdorff separated and \(u \ll y\)): This is
    similar to the above case, but now \(\lnot\lnot P\) must hold, so that
    \({u \ll y = \dirsup S}\), so \(u \below \dirsup S\), as desired.
  \item (\(z\) and \(x\) are Hausdorff separated as are \(z\) and \(y\)): This
    case is impossible, as it implies, using an argument similar to that used in
    the previous two cases, that both \(\lnot P\) and \(\lnot\lnot P\) must
    hold.
  \end{itemize}
  So in all cases, we get the desired \(u \below \dirsup S\) and \(\dirsup S\)
  is seen to be maximal.

  Now suppose that \(\dirsup S\) is sharp. We show that we can decide \(\lnot P\).
  By assumption, \(x\) and \(y\) are apart. We assume that \(x \posnotbelow y\);
  the case for \(y \posnotbelow x\) is similar. By
  Lemma~\ref{posnotbelow-criterion}, there exists \(d \in D\) with \(d \ll x\)
  and \(d \not\below y\). By assumed sharpness of \(\dirsup S\), we have
  \(d \ll \dirsup S\) or \(x \not\below \dirsup S\). If \(d \ll \dirsup S\),
  then we claim that \(\lnot P\) holds. For suppose for a contradiction that
  \(P\) holds, then \(d \ll \dirsup S = y\), contradicting \(d \not\below
  y\). Now suppose that \(x \not\below \dirsup S\).
  Since \(\lnot P\) implies \(x = \dirsup S\), we see that \(\lnot\lnot P\) must
  hold in this case. Hence, \(\lnot P\) is decidable.

  Conversely, suppose that \(\lnot P\) is decidable. If \(\lnot P\) holds, then
  \(\dirsup S = x\), so \(\dirsup S\) must be sharp since \(x\) is by
  Proposition~\ref{sharp-if-strongly-maximal}. And if \(\lnot\lnot P\) holds,
  then \(\dirsup S = y\), and sharpness of \(\dirsup S\) follows from
  Proposition~\ref{sharp-if-strongly-maximal} and strong maximality of \(y\).
  The final claim follows from Proposition~\ref{sharp-if-strongly-maximal} and
  the fact that we just proved that sharpness of \(\dirsup S\) implies
  decidability of \(\lnot P\).
\end{proof}

\begin{proposition}\label{maximal-strongly-maximal-wem-alt}
  Let \(P\) be the poset with exactly three elements \(\bot \leq 0,1\) and \(0\)
  and \(1\) unrelated.  If every maximal element of the algebraic dcpo
  \(\Idl(P)\) is strongly maximal, then weak excluded middle follows.
  In the other direction, if excluded middle holds, then every maximal element
  of \(\Idl(P)\) is strongly maximal.
\end{proposition}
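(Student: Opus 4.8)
The plan is to apply Lemma~\ref{maximal-strongly-maximal-wem} to the two ``branches'' $\dset{0}$ and $\dset{1}$ of $\Idl(P)$. By Lemma~\ref{reflexive-algebraic}, $\Idl(P)$ is algebraic with compact elements exactly $\dset{\bot} = \set{\bot}$, $\dset{0} = \set{\bot,0}$ and $\dset{1} = \set{\bot,1}$; and, unwinding the definition, a general ideal of $P$ is a set $\set{\bot} \cup \set{0 \mid p} \cup \set{1 \mid q}$ for propositions $p,q$ with $\lnot\pa*{p \land q}$, this last condition being exactly what directedness demands, since $0$ and $1$ have no common upper bound in $P$. From this I would record three facts: $\dset{0} \glb \dset{1} = \dset{0} \cap \dset{1} = \dset{\bot}$; any ideal containing $0$ cannot contain $1$ and hence equals $\dset{0}$, so $\upupset\dset{0} = \set{\dset{0}}$ and symmetrically $\upupset\dset{1} = \set{\dset{1}}$; and therefore $\dset{0}$ and $\dset{1}$ are Hausdorff separated by these two disjoint Scott opens, and in particular intrinsically apart.

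Next I would check that $\dset{0}$ and $\dset{1}$ are strongly maximal via Lemma~\ref{strongly-maximal-algebraic}: for $\dset{0}$, the compact elements $\dset{\bot}$ and $\dset{0}$ are below $\dset{0}$, while $\dset{1}$ is Hausdorff separated from $\dset{0}$ by the above; symmetrically for $\dset{1}$. (All of this is unconditional.) This establishes the three hypotheses of Lemma~\ref{maximal-strongly-maximal-wem} for $x \coloneqq \dset{0}$ and $y \coloneqq \dset{1}$. That lemma then gives, for any proposition $P$, that the ideal $I_P \coloneqq \set{\bot} \cup \set{0 \mid \lnot P} \cup \set{1 \mid \lnot\lnot P}$, which is the supremum of the directed set $\set{\dset{\bot}} \cup \set{\dset{0} \mid \lnot P} \cup \set{\dset{1} \mid \lnot\lnot P}$, is a maximal element of $\Idl(P)$, and that if $I_P$ is additionally strongly maximal then $\lnot P$ is decidable. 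Consequently, if every maximal element of $\Idl(P)$ is strongly maximal, then $\lnot P \lor \lnot\lnot P$ holds for every proposition $P$, i.e.\ weak excluded middle.

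For the converse, I would observe that under excluded middle every ideal of $P$ is one of $\dset{\bot}$, $\dset{0}$, $\dset{1}$ (membership of $0$ and of $1$ is decidable, and the two cannot both lie in a directed set), so the maximal elements of $\Idl(P)$ are precisely $\dset{0}$ and $\dset{1}$, which were already shown to be strongly maximal without any assumptions.

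The step I expect to be the main obstacle is the combinatorial bookkeeping about which subsets of $P$ are ideals and which of those are maximal: in particular, recognizing that $I_P$ genuinely is a maximal element (this is where the constructive content sits, via $\lnot\pa*{\lnot P \land \lnot\lnot P}$ together with the characterization of directedness), and computing the minimal Scott opens $\upupset\dset{0}$ and $\upupset\dset{1}$ used to witness Hausdorff separation, and hence strong maximality and apartness, of $\dset{0}$ and $\dset{1}$. Once those are in hand, the statement is an immediate application of Lemma~\ref{maximal-strongly-maximal-wem}.
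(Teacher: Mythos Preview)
Your proposal is correct and follows essentially the same route as the paper: verify the three hypotheses of Lemma~\ref{maximal-strongly-maximal-wem} for \(x \coloneqq \dset 0\) and \(y \coloneqq \dset 1\) (greatest lower bound \(\dset\bot\), strong maximality via Lemma~\ref{strongly-maximal-algebraic}, and apartness via the disjoint Scott opens \(\upupset\dset 0\) and \(\upupset\dset 1\)), then invoke that lemma. Your converse under excluded middle is more explicit than the paper's one-line remark, but the content is the same.
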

\begin{proof}
  We apply Lemma~\ref{maximal-strongly-maximal-wem} with
  \(x \coloneqq \set{\bot,0}\) and \(y \coloneqq \set{\bot,1}\) in
  \(\Idl(P)\). Observe that \(x\) and \(y\) are apart and that they have a
  greatest lower bound, namely \(\set{\bot}\). Thus it remains to show that they
  are both strongly maximal. We do so for \(x\); the proof for \(y\) is
  similar. We apply Lemma~\ref{strongly-maximal-algebraic} with the basis of
  compact elements \(\set{\dset p \mid p \in P}\) and so distinguish three
  cases:
  \begin{itemize}
  \item (\(p = \bot\)): Note that \(\dset \bot = \set{\bot} \ll x\).
  \item (\(p = 0\)): Note that \(\dset 0 = \set{\bot,0} = x \ll x\).
  \item (\(p = 1\)): The elements \(\dset 1 = \set{\bot,1} = y\) and \(x\) are
    Hausdorff separated, since \(\set{x} = \upupset x\) and
    \(\set{y} = \upupset y\) are disjoint Scott opens containing \(x\) and \(y\)
    respectively.
  \end{itemize}
  Thus, \(x\) and \(y\) are both strongly maximal, so weak excluded middle
  follows by Lemma~\ref{maximal-strongly-maximal-wem}.
  If excluded middle holds, then we can easily prove that all maximal elements
  of \(\Idl(P)\) are strongly maximal.
\end{proof}

Theorem~4.3 of \citep{Smyth2006} states that an element \(x\) of a continuous
dcpo is strongly maximal if and only if every Lawson neighbourhood of \(x\)
contains a Scott neighbourhood of \(x\).
This requirement on neighbourhoods is, assuming excluded middle, equivalent to
the Lawson condition (the Scott topology and the Lawson topology coincide on the
subset of maximal elements), as used in~\cite[Corollary~4.4]{Smyth2006} and
proved in \cite[Lemma~V\nobreakdash-6.5]{GierzEtAl2003}. Inspecting the proof
of~\cite{GierzEtAl2003}, we believe that excluded middle is essential. However,
we can still prove a constructive analogue of \cite[Theorem~4.3]{Smyth2006}, but
it requires a positive formulation of the subbasic opens of the Lawson topology,
using the relation \({\posnotbelow}\) rather than \({\not\below}\).

\begin{definition}[Lawson topology]\label{Lawson-subbasics}
  The subbasic \emph{Lawson closed} subsets of a dcpo \(D\) are the Scott closed
  subsets and the upper sets of the form \(\upset x\) for \(x \in D\).
  The subbasic \emph{Lawson opens} are the Scott opens and the sets of
  the form \(\set{y \in D \mid x \posnotbelow y}\) for \(x \in D\).
\end{definition}

In the presence of excluded middle, the subset
\(\set{y \in D \mid x \posnotbelow y}\) is equal to \(D \setminus \upset x\), so
with excluded middle the above definition is equivalent to the classical
definition of the subbasic Lawson opens as
in~{\cite[pp.~211--212]{GierzEtAl2003}}.

\begin{therm}[cf.~{\cite[Theorem~4.3]{Smyth2006}}]
  \label{Lawson-strongly-maximal}
  An element \(x\) of a continuous dcpo is strongly maximal if and only if \(x\)
  is sharp and every Lawson neighbourhood of \(x\) contains a Scott
  neighbourhood of \(x\).
\end{therm}
\begin{proof}
  Let \(x\) be a strongly maximal element of a continuous dcpo \(D\).
  Then \(x\) is sharp by Proposition~\ref{sharp-if-strongly-maximal}.
  It remains to show that every Lawson neighbourhood of \(x\) contains a Scott
  neighbourhood of \(x\). It suffices to do so for the subbasic Lawson open
  neighbourhoods of~\(x\) and in particular for those of the form
  \(\set{d \in D \mid y \posnotbelow d}\) for \(y \in D\). So suppose that we
  have \(y \in D\) with \(y \posnotbelow x\). Then there exists \(u \ll y\) with
  \(u \not\below x\).
  By strong maximality of \(x\), we have \(u \ll x\) or \(y\) and \(x\) are
  Hausdorff separated. Since \(u \not\below x\), the first case \(u \ll x\) is
  impossible. Hence, we have disjoint Scott opens \(U\)~and~\(V\) containing
  \(x\) and \(y\) respectively.
  We claim that \(U \subseteq \set{d \in D \mid y \posnotbelow d}\), which would
  finish the proof. Since \(V\) is a Scott open neighbourhood of \(y\), we can
  use continuity of \(D\) to find \(v \ll y\) with \(v \in V\). Now if
  \(u \in U\), then because \(U\) and \(V\) are disjoint and Scott opens are
  upper sets, we must have \(v \not\below u\). Hence, \(v\) witnesses that
  \(y \posnotbelow u\), so that
  \(U \subseteq \set{d \in D \mid y \posnotbelow d}\).

  Conversely, suppose that \(x\) is a sharp element of a continuous dcpo \(D\)
  such that every Lawson neighbourhood of \(x\) contains a Scott neighbourhood
  of \(x\). We prove that \(x\) is strongly maximal. So let \(u,v\in D\) such
  that \(u \ll v\). By interpolation, there exist \(u',v' \in D\) with
  \(u \ll u' \ll v' \ll v\). Because \(x\) is sharp, we have \(u \ll x\) or
  \(u' \not\below x\). In the first case we are done, so assume that
  \(u' \not\below x\). Then \(u'\) witnesses \(v' \posnotbelow x\).
  Hence, \(\set{d \in D \mid v' \posnotbelow d}\) is a Lawson neighbourhood of
  \(x\). So by assumption on \(x\), there exists a Scott open neighbourhood
  \(U\) of \(x\) contained in \(\set{d \in D \mid v' \posnotbelow d}\).
  But then \(U\)~and~\(\upupset v'\) are disjoint Scott opens containing \(x\)
  and \(v\) respectively. So \(x\) and \(v\) are Hausdorff separated.
\end{proof}

By Proposition~\ref{em-sharp}, every element is sharp if excluded middle is
assumed, so in that case, we can drop the requirement that \(x\) is
sharp. Hence, in the presence of excluded middle, we recover
\cite[Theorem~4.3]{Smyth2006} from Theorem~\ref{Lawson-strongly-maximal}.

\subsection{The subspace of strongly maximal elements}
The classical interest in strong maximality comes from the fact that, while the
subspace of maximal elements may fail to be
Hausdorff~\cite[Example~4]{Heckmann1998}, the subspace of strongly maximal
elements with the relative Scott topology is both Hausdorff and
regular~\cite[Theorem~4.6]{Smyth2006}. We offer constructive proofs of these
claims, with the proviso that the Hausdorff condition is formulated with respect
to the intrinsic apartness.
\begin{proposition}
  The subspace of strongly maximal elements of a continuous dcpo \(D\) with the
  relative Scott topology is Hausdorff, i.e.\ if \(x\)~and~\(y\) are strongly
  maximal, then \(x \apart y\) if and only if there are disjoint Scott opens
  \(U\) and \(V\) such that \(x \in U\) and \(y \in V\).
\end{proposition}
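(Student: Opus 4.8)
The plan is to split the biconditional and dispatch the two implications separately. The implication from the existence of disjoint separating Scott opens to $x \apart y$ is essentially immediate: given disjoint Scott opens $U, V$ with $x \in U$ and $y \in V$, disjointness forces $y \notin U$, so $U$ witnesses $x \posnotbelow y$ and hence $x \apart y$ by Definition~\ref{def:intrinsic-apartness}. This direction uses neither continuity nor strong maximality.

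For the converse, I would start from $x \apart y$, which unfolds to $x \posnotbelow y$ or $y \posnotbelow x$; by symmetry it is enough to treat the first disjunct, the second being identical with the roles of $x$ and $y$ (and of their strong-maximality hypotheses) interchanged. So assume $x \posnotbelow y$. By Lemma~\ref{posnotbelow-criterion} there is $b \in D$ with $b \ll x$ but $b \not\below y$, and by interpolation (Proposition~\ref{interpolation}) I can choose $v$ with $b \ll v \ll x$. Now apply strong maximality of $y$ to the pair $b \ll v$: this yields $b \ll y$ or that $v$ and $y$ are Hausdorff separated. The first case cannot occur, since $b \ll y$ entails $b \below y$, contradicting $b \not\below y$. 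Hence $v$ and $y$ are Hausdorff separated, say by disjoint Scott opens $V' \ni v$ and $W \ni y$.

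The last step is to transport this separation from $v$ up to $x$. Since $v \ll x$, the set $\upupset v$ is a Scott open (Example~\ref{ex:Scott-opens}) containing $x$; and since $V'$ is an upper set containing $v$, we get $\upupset v \subseteq \upset v \subseteq V'$, so $\upupset v$ is disjoint from $W$. Thus $\upupset v$ and $W$ are the desired disjoint Scott opens containing $x$ and $y$.

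I do not expect a genuine obstacle here: the only thing that needs care is picking the right pair to feed into strong maximality of $y$, namely interpolating a $v$ strictly between the witness $b$ (which, being not below $y$, is a fortiori not way below $y$) and $x$, so that strong maximality is forced into its Hausdorff-separation branch; and then noting that Hausdorff separation is preserved along the way-below relation because Scott opens are upper sets. Both bookkeeping points are routine given the lemmas already established.
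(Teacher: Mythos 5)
Your proof is correct and follows essentially the same approach as the paper: extract a witness \(d \ll x\) with \(d \not\below y\) from \(x \apart y\) via Lemma~\ref{posnotbelow-criterion}, feed a way-below pair into strong maximality of \(y\), and refute the first branch using \(d \not\below y\). The only difference is that the paper applies strong maximality of \(y\) directly to the pair \(d \ll x\), which gives Hausdorff separation of \(x\) and \(y\) at once, so your interpolation of \(v\) and the subsequent transport of the separation from \(v\) up to \(x\) via \(\upupset v\) are correct but unnecessary.
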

\begin{proof}
  Let \(x\) and \(y\) be strongly maximal elements. The right-to-left
  implication is immediate by definition of the intrinsic apartness. Now suppose
  that \(x \apart y\). Using Lemma~\ref{posnotbelow-criterion}, we can assume
  without loss of generality that we have \(d \in D\) with \(d \ll x\) and
  \(d \not\below y\). Since \(y\) is strongly maximal, we have \(d \ll y\) or
  \(x\) and \(y\) are Hausdorff separated. But \(d \ll y\) is impossible,
  because \(d \not\below y\). So \(x\)~and~\(y\) are Hausdorff separated, as
  desired.
\end{proof}

\begin{proposition}
  The subspace of strongly maximal elements of a continuous dcpo \(D\) with the
  relative Scott topology is regular, i.e.\ every Scott neighbourhood of a point
  \(x \in D\) contains a Scott closed neighbourhood of \(x\).
\end{proposition}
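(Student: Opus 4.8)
The plan is to imitate the proof of the preceding Hausdorff proposition, but to package the separation into a single Scott closed set rather than a pair of disjoint opens, using interpolation to leave room and the strong maximality of the ambient points to control the closure. Fix a strongly maximal element $x$ of $D$. A relative Scott neighbourhood of $x$ contains a basic one of the form $U \cap \SMax(D)$ with $U$ Scott open in $D$ and $x \in U$, so it suffices to find a relatively Scott closed neighbourhood of $x$ inside $U \cap \SMax(D)$. First I would pass to a small basic Scott open around $x$: by continuity of $D$ and Scott openness of $U$ there is $b \ll x$ with $b \in U$, and since $U$ is an upper set this already gives $\upupset b \subseteq \upset b \subseteq U$. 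Then interpolation (Proposition~\ref{interpolation}) produces $c \in D$ with $b \ll c \ll x$.

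Next I would take $C$ to be the Scott closure of $\upupset c$ and claim that $C \cap \SMax(D)$ is the desired neighbourhood. Since $\upupset c$ is Scott open (Example~\ref{ex:Scott-opens}) and $x \in \upupset c \subseteq C$, the point $x$ lies in the Scott interior of $C$, hence also in the relative interior of $C \cap \SMax(D)$. The substantive step is to verify $C \cap \SMax(D) \subseteq \upupset b$, and thus $\subseteq U$. So let $z \in C$ be strongly maximal and apply Definition~\ref{def:strongly-maximal} to the pair $b \ll c$: either $b \ll z$, in which case $z \in \upupset b$ and we are finished, or $c$ and $z$ are Hausdorff separated by disjoint Scott opens $V \ni c$ and $W \ni z$. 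In the latter case, $c \in V$ and $V$ being an upper set give $\upupset c \subseteq \upset c \subseteq V$; but $z \in \overline{\upupset c}$ forces the Scott open neighbourhood $W$ of $z$ to meet $\upupset c \subseteq V$, contradicting $V \cap W = \emptyset$. Hence only the first case occurs, and the inclusion follows.

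The step I expect to be the main obstacle is choosing the closed set $C$. The Lawson-style candidate $\set{y \in D \mid c \posnotbelow y}$, natural by analogy with Theorem~\ref{Lawson-strongly-maximal}, is only Lawson open and not Scott open, so its complement is not Scott closed; and a downset $\dset d$ is too small to contain a Scott open neighbourhood of a maximal $x$ in a "spread out" dcpo such as the Cantor domain. Using $\overline{\upupset c}$ repairs closedness for free, at the cost of having to check that the closure does not pull in stray strongly maximal elements — and this is precisely where strong maximality of the points $z$ (as opposed to mere sharpness, which would only yield $c \not\below z$ rather than a Hausdorff separation) together with the interpolation slack $b \ll c \ll x$ does the work. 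The remaining bookkeeping — that $\upupset b \subseteq U$, and that interiors and closures restrict correctly to the subspace — is routine.
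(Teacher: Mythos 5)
Your proof is correct and follows the same skeleton as the paper's: reduce to a basic open, interpolate to get \(b \ll c \ll x\), take a Scott closed set containing \(\upupset c\), and use strong maximality plus a Hausdorff-separation contradiction to push strongly maximal points of that closed set back into \(\upupset b\). The one genuine difference is the choice of closed set. The paper takes \(C \coloneqq D \setminus V\) where \(V\) is the union of all Scott opens disjoint from \(\upupset v\); membership in \(C\) then directly forbids lying in any open disjoint from \(\upupset v\), so the contradiction in the separated case is immediate and no property of closures is needed. You instead take \(C\) to be the Scott closure of \(\upupset c\), and your contradiction rests on the claim that a point of \(\overline{\upupset c}\) has every Scott open neighbourhood meeting \(\upupset c\) in an inhabited set. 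Classically this is the definition of closure, but the paper defines closure as the smallest closed superset, so constructively this adherence property is something you must check; it does hold for the Scott topology, because the set \(T \coloneqq \{y \in D \mid \text{every Scott open containing } y \text{ meets } \upupset c\}\) is a lower set (opens are upper sets) and closed under directed suprema (by Scott openness of the neighbourhood), hence Scott closed, and it contains \(\upupset c\), so it contains the closure. Add that one-line verification and your argument is complete; note that your \(C\) is contained in the paper's \(C\) (possibly properly, constructively), so the two proofs certify slightly different closed neighbourhoods, with the paper's choice buying a shorter contradiction and yours staying closer to the classical notion of closure.
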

\begin{proof}
  We adapt the proof of \cite[Theorem~4.6~(i)]{Smyth2006}, avoiding the use of
  the axiom of choice.
  Write \(\SMax(D)\) for the subspace of strongly maximal elements of \(D\) and
  let \(x\) be a strongly maximal element. It suffices to prove the claim for
  basic Scott open neighbourhoods of \(x\). So suppose that \(x \in \upupset u\)
  for some \(u \in D\). By interpolation, there exists \({v \in D}\) such that
  \({x \in \upupset v \subseteq \upupset u}\). We will construct a Scott closed
  subset \(C\) such that \(\upupset v \subseteq C\) and
  \[
    C \cap \SMax(D) \subseteq \upupset u \cap \SMax(D).
  \]
  Define the Scott open subset \(V\) by
  \[
    V \coloneqq \bigcup\set{U \subseteq D \mid U \text{ is Scott open and }
      \upupset v \cap U = \emptyset}
  \]
  and the Scott closed subset \(C \coloneqq \lnot V = D \setminus V\).

  Note that if \(y \in V\), then \(y \in U\) for some Scott open \(U\) with
  \(\upupset v \cap U = \emptyset\), so that \(y \not\in \upupset v\). Hence,
  \(V \subseteq \lnot \upupset v\), so
  \(\upupset v \subseteq \lnot\lnot \upupset v \subseteq \lnot V = C\). Thus,
  \(x \in \upupset v \subseteq C\), so \(C\) is a Scott closed neighbourhood of
  \(x\), as desired.

  It remains to show that
  \(C \cap \SMax(D) \subseteq \upupset u \cap \SMax(D)\). So let \(y \in C\) be
  strongly maximal. Then \(u \ll y\) holds or \(v\) and \(y\) are Hausdorff
  separated. If \(u \ll y\), then \(y \in \upupset u\) and we are done.  So
  suppose that \(v\) and \(y\) are Hausdorff separated. We prove that this is
  impossible. Assume for a contradiction that we have disjoint Scott opens
  \(U_v\)~and~\(U_y\) containing \(v\)~and~\(y\) respectively. Since \(U_v\) is
  an upper set, we have \(\upupset v \subseteq U_v\). As \(U_v\) and \(U_y\) are
  disjoint, we thus get \(\upupset v \cap U_y = \emptyset\). Hence,
  \(y \in U_y \subseteq V\), contradicting \(y \in C\).
\end{proof}

\section{Examples}\label{sec:examples}
In this final section before the conclusion, we illustrate the foregoing
notions of intrinsic apartness, sharpness and strong maximality, by studying
several natural examples. The first three examples are generalized domain
environments in the sense of~\cite{Heckmann1998}: we consider dcpos
\(D\) and topological spaces \(X\) such that \(X\) \emph{embeds} into the
subspace of maximal elements of \(D\). In fact, we will see that \(X\) is
homeomorphic to the subspace of \emph{strongly} maximal elements of \(D\).
Specifically, we will consider Cantor space, Baire space and the real line.
The penultimate example shows that sharpness characterizes exactly those lower
reals that are located.
The final example showcases another natural embedding of Cantor space into a
dcpo constructed using exponentials and the lifting monad.

\subsection{The Cantor and Baire domains}\label{sec:Cantor-and-Baire-domains}
Fix an inhabited set \(A\) with decidable equality. Typically, we will be
interested in \(A = \Two \coloneqq \set{0,1}\) and \(A = \Nat\).
Recall that a \emph{finite sequence} on \(A\) is a function
\(\set{0,1,\dots,{n-1}} \to A\) for a natural number \(n \in \Nat\) (for
\(n = 0\) we get the empty sequence).
An \emph{infinite sequence} on \(A\) is simply a function \(\Nat \to A\).

\begin{definition}[\(\Seqspace\) and \(\Seqdom\)]\hfill
  \begin{enumerate}
  \item We write \(\pa*{\Finseq , \preceq}\) for the poset of finite sequences
    on \(A\) ordered by prefix and \(\Seqdom\) for the ideal completion of
    \(\pa*{\Finseq , \preceq}\), which is an algebraic dcpo by
    Lemma~\ref{reflexive-algebraic}.
  \item For an infinite sequence \(\alpha\), we write \(\initseg{\alpha}{n}\)
    for the first \(n\) elements of \(\alpha\).  Given a finite
    sequence~\(\sigma\), we write \(\sigma \prec \alpha\) if \(\sigma\) is an
    initial segment of \(\alpha\).
  \item We write \(\Seqspace\) for the space \(A^\Nat\) of infinite sequences on
    \(A\) with the product topology, taking the discrete topologies on \(\Nat\)
    and \(A\). The sets \(\set{\alpha \in \Seqspace \mid \sigma \prec \alpha}\)
    with \(\sigma\) a finite sequence form a basis of opens for
    \(\Seqspace\). Furthermore, the space \(\Seqspace\) has a natural notion of
    apartness:
    \(\alpha \apart_{\Seqspace} \beta \iff \exists_{n \in
        \Nat}\,\initseg{\alpha}{n} \neq \initseg{\beta}{n}\).
  \end{enumerate}
\end{definition}

\begin{definition}[Cantor and Baire space/domain]
  If \(A = \Two\), then \(\Seqspace\) is Cantor space and we call
  \(\Seqdom\) the \emph{Cantor domain}; and if we take \(A = \Nat\), then
  \(\Seqspace\) is Baire space and we call \(\Seqdom\) the \emph{Baire domain}.
\end{definition}

\begin{definition}[\(\iota\)]
  We define an injection \(\iota \colon \Seqspace \hookrightarrow \Seqdom\) by
  \(\iota(\alpha) \coloneqq \bigcup_{\sigma\prec\alpha} \dset\sigma = \set{\tau
    \in \Finseq \mid \tau \prec \alpha}\).
\end{definition}

\begin{therm}\label{seq-iota-strongly-maximal}
  The image of \(\iota\) is exactly the subset of strongly maximal elements of
  \(\Seqdom\).
\end{therm}
\begin{proof}
  We first prove that \(\iota(\alpha)\) is strongly maximal for every infinite
  sequence \(\alpha\), using
  Lemmas~\ref{strongly-maximal-algebraic} and \ref{reflexive-algebraic}.  So let
  \(\sigma\) be a finite sequence. Since \(A\) has decidable equality and
  \(\sigma\) is finite, either \(\sigma \prec \alpha\) or not. If
  \(\sigma \prec \alpha\), then
  \(\dset\sigma = \set{\tau \in \Finseq \mid \tau \preceq \sigma} \ll
  \iota(\alpha)\) and we are done. So suppose that \(\sigma \nprec
  \alpha\). Writing \(n\) for the length of \(\sigma\), we see that
  \(\sigma \neq \initseg{\alpha}{n}\). Therefore, the Scott opens
  \(\upupset\pa*{\dset \sigma}\) and \(\upupset\pa*{\dset \initseg{\alpha}{n}}\)
  must be disjoint, while the former contains \(\sigma\) and the latter contains
  \(\alpha\), completing the proof that \(\iota(\alpha)\) is strongly maximal.

  Conversely, suppose that \(x \in \Seqdom\) is strongly maximal.
  Using induction, we will construct for every natural number \(n\), a finite
  sequence \(\sigma_n\) of length \(n\) such that
  \(\sigma_n \preceq \sigma_{n+1} \in x\). Given such finite sequences, we
  define the infinite sequence \(\alpha\) by
  \(\initseg{\alpha}{n} \coloneqq \sigma_n\) and we claim that
  \(x = \iota(\alpha)\). The inclusion \(\iota(\alpha) \subseteq x\) is
  clear. For the other inclusion, suppose that \(\tau \in x\). Then by
  directedness of \(x\), we must have \(\sigma_k = \tau\) where
  \(k = \length(\tau)\). Hence, \(\tau \in \iota(\alpha)\), as desired. We now
  describe how to inductively construct each \(\sigma_n\), starting with the
  empty sequence~\(\sigma_0\).
  Assume that we are given
  \(\sigma_0 \preceq \sigma_1 \preceq \cdots \preceq \sigma_m \in x\) of length
  \(0,1,\dots,m\) for some natural number \(m\).  We construct
  \(\sigma_{m+1} \in x\) of length \(m+1\) extending \(\sigma_m\). Since \(A\)
  is assumed to be inhabited, we can extend \(\sigma_m\) by some element
  \(a \in A\) to get a finite sequence \(\tau\) of length \(m+1\). By
  Lemma~\ref{strongly-maximal-algebraic}, either \(\tau \in x\) or
  \(\dset \tau\) and \(x\) are Hausdorff separated. If \(\tau \in x\), then we
  set \(\sigma_{m+1} \coloneqq \tau\). If \(\dset \tau\) and \(x\) are Hausdorff
  separated, then by Lemma~\ref{Hausdorff-separated-criterion}, there exists a
  finite sequence \(\rho \in x\) such that \(\rho\) and \(\tau\) agree on the
  first \(m\) indices (since \(\sigma_m \in x\)), but disagree on index \(m+1\).
  We now define \(\sigma_{m+1}\) to be the extension of \(\sigma_m\) by
  \(\rho({m+1})\). Notice that \(\sigma_{m+1} \in x\), as desired.
\end{proof}

\begin{proposition}\label{seq-strongly-max-wem}
  Suppose that \(A\) has at least two elements. If every
  maximal element of \(\Seqdom\) is strongly maximal, then weak excluded middle
  holds.
\end{proposition}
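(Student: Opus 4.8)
The plan is to reuse Lemma~\ref{maximal-strongly-maximal-wem}, exactly as Proposition~\ref{maximal-strongly-maximal-wem-alt} did for \(\Idl(P)\), but now inside \(\Seqdom\). Since \(A\) has at least two elements, pick \(a_0,a_1 \in A\) with \(a_0 \neq a_1\), let \(\alpha\) and \(\beta\) be the constant infinite sequences with values \(a_0\) and \(a_1\) respectively, and set \(x \coloneqq \iota(\alpha)\) and \(y \coloneqq \iota(\beta)\).

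First I would check the three hypotheses of Lemma~\ref{maximal-strongly-maximal-wem} for this \(x\) and \(y\). They are strongly maximal by Theorem~\ref{seq-iota-strongly-maximal}. They are intrinsically apart: \(\seq{a_0} \neq \initseg{\beta}{1} = \seq{a_1}\), so \(\upupset\pa*{\dset\seq{a_0}}\) is a Scott open containing \(x\) but not \(y\) (in fact, as in the proof of Theorem~\ref{seq-iota-strongly-maximal}, \(\upupset\pa*{\dset\seq{a_0}}\) and \(\upupset\pa*{\dset\seq{a_1}}\) even witness that \(x\) and \(y\) are Hausdorff separated). And they have a greatest lower bound in \(\Seqdom\): the intersection \(x \cap y = \set{\tau \in \Finseq \mid \tau \prec \alpha \text{ and } \tau \prec \beta}\) consists of the common prefixes of \(\alpha\) and \(\beta\), which form a chain under \(\preceq\) and hence a rounded ideal; since ideals are ordered by inclusion, this intersection is visibly the greatest lower bound. (Concretely, since \(\alpha\) and \(\beta\) already disagree at index~\(0\), it is just the least element \(\set{\seq{}}\).)

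Then, for an arbitrary proposition \(P\), Lemma~\ref{maximal-strongly-maximal-wem} produces a maximal element \(\dirsup S\) of \(\Seqdom\), where \(S = \set{x \glb y} \cup \set{x \mid \lnot P} \cup \set{y \mid \lnot\lnot P}\), with the property that if \(\dirsup S\) is strongly maximal then \(\lnot P\) is decidable. By hypothesis every maximal element of \(\Seqdom\) is strongly maximal, so \(\dirsup S\) is; hence \(\lnot P\) is decidable, and since \(P\) was arbitrary, weak excluded middle follows. I do not expect any real obstacle here: the only point needing a little care is confirming that the particular meet \(x \glb y\) exists in \(\Seqdom\), which is immediate because the common prefixes of two infinite sequences are linearly ordered.
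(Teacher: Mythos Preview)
Your proposal is correct and follows essentially the same route as the paper: the paper also invokes Lemma~\ref{maximal-strongly-maximal-wem} with \(x = \iota(\seq{a_0,a_0,\dots})\) and \(y = \iota(\seq{a_1,a_1,\dots})\), noting that these are strongly maximal by Theorem~\ref{seq-iota-strongly-maximal}, that their greatest lower bound is \(\set{\seq{}}\), and that they are apart via the disjoint opens \(\upupset\set{\seq{},\seq{a_0}}\) and \(\upupset\set{\seq{},\seq{a_1}}\) (which are exactly your \(\upupset(\dset\seq{a_0})\) and \(\upupset(\dset\seq{a_1})\)). Your extra remark that the intersection of two ideals of \(\Finseq\) is again an ideal because common prefixes form a chain is a nice justification that the paper leaves implicit.
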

\begin{proof}
  Let \(a_0\) and \(a_1\) be two nonequal elements of \(A\).
  We use Lemma~\ref{maximal-strongly-maximal-wem} with the following elements
  \({x \coloneqq \iota(\seq{a_0,a_0,\dots})}\) and
  \(y \coloneqq \iota(\seq{a_1,a_1,\dots})\) of \(\Seqdom\). By
  Theorem~\ref{seq-iota-strongly-maximal}, the elements \(x\) and \(y\) are
  both strongly maximal. Moreover, \(x\) and \(y\) have a greatest lower bound,
  namely the singleton containing the empty sequence~\(\seq{}\). Finally, \(x\) and \(y\)
  are apart, as witnessed by the disjoint Scott opens
  \(\upupset \set{\seq{},\seq{a_0}}\) and \(\upupset \set{\seq{},\seq{a_1}}\).
\end{proof}

\begin{example}\label{not-not-total-maximal}
  Theorem~\ref{seq-iota-strongly-maximal} tells us that functions \(\Nat \to A\)
  give strongly maximal elements of \(\Seqdom\).
  An example of a maximal element that constructively fails to be strongly
  maximal is given by a \(\lnot\lnot\)\nobreakdash-total function. More
  precisely, suppose that \(R \subseteq \Nat \times A\) is a single-valued
  binary relation such that %
  (1) membership of \(R\) is decidable, and
  (2) the relation \(R\) is \(\lnot\lnot\)-total, i.e.\ we have
  \(\lnot\lnot\pa*{\forall_{n \in \Nat}\exists_{a \in A}\,(n,a) \in R}\).
  We define
  \[
    \overline{R} \coloneqq \set{\sigma \in A^\ast \mid (0,\sigma_0) , \ldots ,
      (n-1,\sigma_{n-1}) \in R \text{ with \(n\) the length of \(\sigma\)}}.
  \]
  Then \(\overline R\) is a lower set and semidirected by single-valuedness of
  \(R\), so that \(\overline R \in \Seqdom\).
  Note that, by Theorem~\ref{seq-iota-strongly-maximal}, \(\overline R\) is not
  necessarily strongly maximal as \(R\) is only \(\lnot\lnot\)-total.
  But we claim that \(\overline R\) \emph{is} maximal. For suppose that
  \(x \in \Seqdom\) is such that \(\overline R \subseteq x\); we show that
  \(x \subseteq \overline R\). So let \(\sigma \in x\).
  By our assumption that membership of \(R\) is decidable it suffices to check
  that for every natural number \(i\) less than the length of \(\sigma\) we have
  \(\lnot((i,\sigma_i) \not\in R)\).
  So suppose for a contradiction that there exists an \(i\) with
  \((i,\sigma_i) \not\in R\).
  Since membership of \(R\) is decidable, we may assume that \(i\) is the least
  such number.
  We claim that there is no element \(a \in A\) for which \((i,a) \in R\) holds,
  which contradicts \(\lnot\lnot\)-totality of \(R\).
  For if \(a \in A\) were such an element, then we define the sequence \(\tau\)
  by extending \(\overline{\sigma_{i-1}}\) with \(a\).
  By the choice of \(i\), we have \(\tau \in \overline R\) and hence
  \(\tau \in x\) since \(\overline R \subseteq x\).
  But then \(a = \sigma_i\) by semidirectedness of \(x\), but we had
  \((i,\sigma_i) \not\in R\).
\end{example}

\begin{definition}[Markov's Principle]
  \emph{Markov's Principle} is the assertion that for every infinite binary
  sequence \(\phi\) we have that
  \(\lnot \pa*{\forall_{n \in \Nat}\,\phi(n) = 0}\) implies
  \(\exists_{k \in \Nat}\,\phi(k) =1\).
\end{definition}
Markov's Principle~\citep{BridgesRichman1987} follows from excluded middle, but
is independent in constructive mathematics, i.e.\ Markov's Principle is not
provable and neither is its negation.

\begin{therm}
  For the Cantor domain, the strongly maximal elements are exactly those
  elements that are both sharp and maximal.
  For the Baire domain, the strongly maximal elements are exactly the elements
  that are both sharp and maximal if and only if Markov's Principle holds.
\end{therm}
\begin{proof}
  By Propositions~\ref{sharp-if-strongly-maximal} and
  \ref{maximal-if-strongly-maximal}, every strongly maximal element of a
  continuous dcpo is both sharp and maximal, so it's the assertion that strong
  maximality follows from the conjunction of sharpness and maximality that
  matters.

  Let \(x\) be a sharp and maximal element of the Cantor domain. We prove that
  \(x\) is strongly maximal using Lemma~\ref{strongly-maximal-algebraic}. So let
  \(\sigma\) be a finite binary sequence. We must show that
  \(\dset \sigma \below x\) or \(\dset \sigma\) and \(x\) are Hausdorff
  separated. By sharpness of \(x\) and Proposition~\ref{sharp-algebraic},
  membership of \(x\) is decidable.
  If \(\sigma \in x\), then \(\dset \sigma \below x\) holds and we are done.
  So suppose that \(\sigma \not\in x\). Then using decidability of membership of
  \(x\) and finiteness of \(\sigma\), we can find the shortest prefix \(\tau\)
  of \(\sigma\) for which \(\tau \not\in x\). Assume without loss of generality
  that \(\tau\) ends with \(0\) and write \(\rho\) for the sequence obtained by
  replacing the final \(0\) in \(\tau\) by \(1\). By definition of \(\tau\) and
  the fact that \(x\) is a maximal ideal, it must be the case that
  \(\rho \in x\). But now \(\upupset (\dset \tau)\) and \(\upupset (\dset \rho)\)
  are disjoint Scott opens witnessing Hausdorff separation of \(\dset \sigma\)
  and \(x\). So \(x\) is strongly maximal, as desired.

  Now suppose that every sharp and maximal element of the Baire domain is
  strongly maximal. We show that Markov's Principle follows.
  Let \(\phi\) be an infinite binary sequence for which
  \(\lnot(\forall_{n\in\Nat}\,\phi(n)=0)\) holds.
  We need to find \(k \in \Nat\) such that \(\phi(k) = 1\).
  Define the following element of the Baire domain
  \[
    x_\phi \coloneqq
    \set{\sigma \in \Nat^\ast \mid
      \sigma  \text{ is empty or }
      \sigma \prec \seq{k,k,\dots}
      \text{ and \(k\) is the least \(m \in \Nat\) with \(\phi(m) = 1\)}}.
  \]
  By Proposition~\ref{sharp-algebraic} and the fact that \(x_\phi\) has
  decidable membership, we see that \(x_\phi\) is sharp.
  We proceed to show that \(x_\phi\) is maximal. So suppose that we have \(y\)
  in the Baire domain with \(x_\phi \subseteq y\). We must show that
  \(y \subseteq x_\phi\), so let \(\sigma \in y\) be a finite sequence of
  natural numbers. We are going to show that \(\sigma \not\in x_\phi\) is
  impossible, which suffices, because membership of \(x_\phi\) is decidable.
  So assume for a contradiction that \(\sigma \not\in x_\phi\). Then
  \(\sigma\) must be nonempty and \(\sigma(0)\) is not the least \(m \in \Nat\)
  for which \(\phi(m) = 1\). We are going to show that \(\phi\) must be zero
  everywhere, contradicting our assumption. Let \(n \in \Nat\) be
  arbitrary. Then either \(\phi(n) = 0\) in which case we are done, or
  \(\phi(n) = 1\). We show that the the second case is impossible. For if
  \(\phi(n) = 1\), then we can find \(k \in \Nat\) such that \(k\) is the least
  natural number \(m \in \Nat\) for which \(\phi(m) = 1\). But then
  \(\seq{k} \in x_\phi\), so \(\seq{k} \in y\), but also
  \(\sigma\in y\) while \(\sigma(0) \neq k\), contradicting directedness of
  \(y\).
  Thus, we have proved that \(x_\phi\) is both sharp and maximal. Hence, by
  assumption, the element \(x_\phi\) is strongly maximal.
  By Theorem~\ref{seq-iota-strongly-maximal}, this means that
  \(x_\phi = \iota(\alpha)\) for some infinite sequence \(\alpha\) of natural
  numbers. But then \(\phi(\alpha(0)) = 1\), so \(\alpha(0)\) is the sought
  value.

  Conversely, suppose that Markov's Principle holds and let \(x\) be a sharp and
  maximal element of the Baire domain. We prove that \(x\) is strongly maximal
  using Lemma~\ref{strongly-maximal-algebraic}. So let \(\sigma\) be a finite
  sequence of natural numbers. We must show that \(\dset \sigma \below x\) or
  \(\dset \sigma\) and \(x\) are Hausdorff separated. By sharpness of \(x\) and
  Proposition~\ref{sharp-algebraic}, membership of \(x\) is decidable.
  If \(\sigma \in x\), then \(\dset \sigma \below x\) holds and we are done.
  So suppose that \(\sigma \not\in x\). Then using decidability of membership of
  \(x\) and finiteness of \(\sigma\), we can find the longest prefix \(\tau\)
  of \(\sigma\) for which \(\tau\) is still in \(x\).
  Now, using decidability of membership of \(x\) again, consider the infinite
  binary sequence \(\chi\) given by
  \[
    \chi(n) =
    \begin{cases}
      1 &\text{if \(\tau\) extended by \(n\) is in \(x\)}; \\
      0 &\text{if \(\tau\) extended by \(n\) is not in \(x\)}.
    \end{cases}
  \]
  Notice that \(\chi\) cannot be zero everywhere, as this would contradict the
  assumed maximality of \(x\). Hence, by Markov's Principle, there exists
  \(k \in \Nat\) such that \(\tau\) extended by \(k\) is in \(x\). We
  define \(\rho\) to be this extension of~\(\tau\), so that \(\rho \in x\).
  Then \(\upupset (\dset \sigma)\) and \(\upupset (\dset \rho)\)
  are disjoint Scott opens witnessing Hausdorff separation of \(\dset \sigma\)
  and \(x\). Hence, \(x\) is strongly maximal, as we wished to prove.
\end{proof}

\begin{lemma}\label{seq-T0}
  The space \(\Seqspace\) is \(T_0\)-separated with respect to
  \(\apart_{\Seqspace}\), i.e.\ for \(\alpha,\beta \in \Seqspace\) we have
  \(\alpha \apart_{\Seqspace} \beta\) if and only if there exists an open
  containing \(x\) but not \(y\) or vice versa.
\end{lemma}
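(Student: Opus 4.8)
The plan is to unwind both sides of the claimed equivalence and reduce everything to the elementary observation that, since $A$ has decidable equality, equality of finite sequences over $A$ is decidable, and that $\sigma \prec \gamma$ holds precisely when $\sigma = \initseg{\gamma}{\length\sigma}$.

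For the left-to-right direction I would assume $\alpha \apart_{\Seqspace} \beta$, so that there is some $n \in \Nat$ with $\initseg{\alpha}{n} \neq \initseg{\beta}{n}$. Setting $\sigma \coloneqq \initseg{\alpha}{n}$, the basic open $U \coloneqq \set{\gamma \in \Seqspace \mid \sigma \prec \gamma}$ contains $\alpha$. To see that it does not contain $\beta$, note that $\sigma \prec \beta$ would give $\sigma = \initseg{\beta}{n}$, contradicting $\initseg{\alpha}{n} = \sigma \neq \initseg{\beta}{n}$; hence $U$ separates $\alpha$ from $\beta$.

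For the converse I would suppose that $U$ is an open with, say, $\alpha \in U$ and $\beta \notin U$ (the other case being symmetric, as $\apart_{\Seqspace}$ is symmetric). Since the sets $\set{\gamma \in \Seqspace \mid \tau \prec \gamma}$ with $\tau$ a finite sequence form a basis, there is such a $\tau$ with $\alpha \in \set{\gamma \in \Seqspace \mid \tau \prec \gamma} \subseteq U$; in particular $\tau \prec \alpha$, so $\initseg{\alpha}{n} = \tau$ for $n \coloneqq \length\tau$. From $\beta \notin U$ we get $\beta \notin \set{\gamma \in \Seqspace \mid \tau \prec \gamma}$, i.e.\ $\lnot(\tau \prec \beta)$; and since $\tau \prec \beta$ is equivalent to the decidable statement $\tau = \initseg{\beta}{n}$, this negation is already the positive inequality $\tau \neq \initseg{\beta}{n}$. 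Therefore $\initseg{\alpha}{n} = \tau \neq \initseg{\beta}{n}$, so $\alpha \apart_{\Seqspace} \beta$.

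I expect the only point requiring care --- and hence the main, albeit minor, obstacle --- to be the constructive bookkeeping around $\prec$ and $\neq$ on finite sequences: one must use that $\sigma \prec \gamma$ is decidable (equivalently, equivalent to the decidable equality $\sigma = \initseg{\gamma}{\length\sigma}$, which is decidable because $A$ has decidable equality and $\sigma$ is finite) in order to convert the logical negation coming out of $\beta \notin U$ into the positively formulated inequality demanded by $\apart_{\Seqspace}$. Everything else is a routine unpacking of the definition of the product topology on $\Seqspace$.
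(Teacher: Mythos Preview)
Your proposal is correct and follows essentially the same route as the paper's proof: both directions use the same basic open \(\set{\gamma \mid \sigma \prec \gamma}\) and the same reduction of \(\sigma \prec \gamma\) to \(\sigma = \initseg{\gamma}{\length\sigma}\). Your version is slightly more explicit than the paper's about the constructive point that decidability of equality on \(\Finseq\) is what turns \(\lnot(\tau \prec \beta)\) into the positive inequality \(\tau \neq \initseg{\beta}{n}\), which the paper leaves implicit.
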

\begin{proof}
  If \(\alpha \apart_{\Seqspace} \beta\), then there exists a natural number
  \(n\) such that \(\initseg{\alpha}{n} \neq \initseg{\beta}{n}\) and
  \(\set{\gamma \in \Seqspace \mid \initseg{\alpha}{n} \prec \gamma}\) is an
  open containing \(\alpha\) but not \(\beta\). Conversely, suppose that \(U\)
  is an open containing \(\alpha\) but not \(\beta\). By the description of
  basic opens of \(\Seqspace\), there exists a finite sequence
  \(\sigma\) such that \(\sigma \prec \alpha\), while \(\sigma \nprec \beta\).
  Hence, if we take \(n\) to be the length of \(\sigma\), then
  \(\initseg{\alpha}{n} \neq \initseg{\beta}{n}\), finishing the proof.
\end{proof}

\begin{therm}\label{seq-homeo-apart}
  The map \(\iota\) is a homeomorphism from the space \(\Seqspace\) of infinite
  sequences to the space of strongly maximal elements of the algebraic dcpo
  \(\Seqdom\) with the relative Scott topology.
  Moreover, \(\iota\) preserves and reflects apartness:
  \(\alpha \apart_{\Seqspace} \beta\) if and only if
  \(\iota(\alpha) \apart_{\Seqdom} \iota(\beta)\) for every two infinite
  sequences \(\alpha,\beta \in \Seqspace\).
\end{therm}
\begin{proof}
  Let \(\SMax(\Seqdom)\) be the space of strongly maximal elements of
  \(\Seqdom\) with the relative Scott topology. By
  Theorem~\ref{seq-iota-strongly-maximal}, the map
  \(\iota \colon \Seqspace \to \SMax(\Seqdom)\) is a bijection. Now let
  \(\sigma\) be a finite sequence and consider the basic open
  \(\upupset(\dset \sigma) = \set{x \in \Seqdom \mid \dset \sigma \subseteq x}\)
  of \(\Seqdom\). By Theorem~\ref{seq-iota-strongly-maximal}, we have
  \(\upupset(\dset \sigma) \cap \SMax(\Seqdom) = \set{\iota(\alpha) \mid \sigma
    \prec \alpha}\), which is \(\iota\) applied to a basic open of
  \(\Seqspace\). It follows that \(\iota\) is both open and continuous.
  Finally, note that
  \begin{align*}
    \alpha \apart_{\Seqspace}\beta &\iff
       \exists\, U \subseteq \Seqspace \text{ open separating \(\alpha\) and \(\beta\)}
    &&\text{(by Lemma~\ref{seq-T0})} \\
       &\iff \exists\, U \subseteq \Seqdom \text{ open separating  \(\iota(\alpha)\) and \(\iota(\beta)\)}
    &&\text{(since \(\iota\) is a homeomorphism)} \\
       &\iff \iota(\alpha) \apart \iota(\beta) &&\text{(by definition of the intrinsic apartness)}
  \end{align*}
  as desired.
\end{proof}

Thus, the intrinsic apartness generalizes the well-known apartness on infinite
sequences and in particular those on Cantor and Baire space.

\subsection{Partial Dedekind reals}
The previous section illustrated the theory of the intrinsic apartness and sharp
and strongly maximal elements in the algebraic case. For the continuous case,
the partial Dedekind reals are a very natural example.
We begin by recalling the definition of a (two-sided) Dedekind real number.
\begin{definition}[Dedekind real]\label{def:Dedekind-real}
  Given a pair \(x = (L_x,U_x)\) of subsets of \(\Rat\), we suggestively write
  \(p < x\) for \(p \in L_x\) and \(x < q\) for \(q \in U_x\). A~\emph{Dedekind
    real} \(x\) is a pair \((L_x,U_x)\) of subsets of \(\Rat\) satisfying the
  following properties:
  \begin{enumerate}
  \item \emph{boundedness}: there exist \(p,q \in \Rat\) such that \(p < x\) and
    \(x < q\);
  \item \emph{roundedness}: for every \(p \in \Rat\), we have
    \(p < x \iff \exists_{r \in \Rat} \pa*{p < r} \land \pa*{r < x}\) and
    similarly, for every \(q \in \Rat\), we have
    \(x < q \iff \exists_{s \in \Rat}\pa{s < q} \land \pa{x < s}\);
  \item \emph{transitivity}: for every \(p,q \in \Rat\), if \(p < x\) and
    \(x < q\), then \(p < q\);
  \item \emph{locatedness}: for every \(p,q\in\Rat\) with \(p < q\) we have
    \(p < x\) or \(x < q\).
  \end{enumerate}
\end{definition}

\begin{definition}[Real line \(\Rea\)]
  The \emph{real line} \(\Rea\) is the topological space of all Dedekind real
  numbers whose basic opens are given by
  \(\set{x \in \Rea \mid p < x \text{ and } x < q}\) for \(p,q \in \Rat\). The
  space \(\Rea\) has a natural notion of apartness, namely:
  \(x \apart_{\Rea} y \iff {\exists_{p \in \Rat}\,\pa*{x < p < y} \vee \pa*{y <
        p < x}}\).
\end{definition}

\begin{definition}[Partial Dedekind reals \(\Realdom\)]
  Consider the set
  \(\Rat \times_{<} \Rat \coloneqq \set{(p,q) \in \Rat\times\Rat \mid p < q}\)
  ordered by defining the strict order \((p,q) \prec (r,s) \iff p < r < s <
  q\). The pair \(\pa*{\Rat \times_{<} \Rat , \prec}\) is an abstract basis, so
  \(\Realdom \coloneqq \Idl(\Rat\times_{<}\Rat,\prec)\) is a continuous dcpo and
  we refer to its elements as \emph{partial Dedekind reals}.
\end{definition}

\begin{lemma}\label{Dedekind-way-below-criterion}
  For every two rationals \(p < q\) and \(I \in \Realdom\), we have
  \(\dset(p,q) \ll I\) if and only if \((p,q) \in I\).
  In particular, \(\dset(p,q) \ll \dset(r,s)\) if and only if \(p < r < s < q\).
\end{lemma}
\begin{proof}
  Suppose that \(\dset(p,q) \ll I\). By a well-known fact~\cite[Item~2 of
  Proposition~2.2.22]{AbramskyJung1995} of rounded ideals, there exists
  \((p',q') \in I\) such that \(\dset(p,q) \subseteq \dset(p',q')\). Using
  roundedness, there exists \((r,s) \in I\) such that \(p' < r < s < q'\). We
  claim that \(p < r\) and \(s < q\), from which \((p,q) \in I\) follows as
  \(I\) is a lower set. We use trichotomy on the rationals, so assume for a
  contradiction that \(r \leq p\). Then \(p' < p\), contradicting
  \(\dset(p,q) \subseteq \dset(p',q')\). Thus \(p < r\) and similarly,
  \(s < q\), as desired.  The converse follows directly from \cite[Item~2 of
  Proposition~2.2.22]{AbramskyJung1995}.
\end{proof}

\begin{definition}[\(\iota\)]
  We define an injection \(\iota \colon \Rea \hookrightarrow \Realdom\) by
  \(\iota\pa*{L_x,U_x} \coloneqq \set{(p,q) \mid p \in L_x, q \in U_x}\).
  The map \(\iota\) is well-defined precisely because a Dedekind real is
  required to be bounded, rounded and transitive.
\end{definition}

\begin{therm}\label{Dedekind-iota-strongly-maximal}
  The image of \(\iota\) is exactly the subset of strongly maximal elements of
  \(\Realdom\).
\end{therm}
\begin{proof}
  Suppose that \(x\) is a Dedekind real. We show that \(\iota(x)\) is strongly
  maximal in \(\Realdom\). We use Lemma~\ref{strongly-maximal-basis}, so assume
  that we have rationals \(p,q,r,s \in \Rat\) such that
  \(\dset(p,q) \ll \dset(r,s)\). Then we have \(p < r < s < q\) by
  Lemma~\ref{Dedekind-way-below-criterion}. By locatedness of \(x\), we have
  \(p < x\) or \(x < r\).
  If \(x < r\), then there exist rationals \(u < x < v\) such that
  \(u < v < r\), so that the intervals \((u,v)\) and \((r,s)\) don't
  overlap. Hence, \(\dset (r,s)\) and \(x\) are seen to be Hausdorff separated
  using Lemma~\ref{Hausdorff-separated-criterion}.
  Now suppose that \(p < x\). We use locatedness of \(x\) once more to decide
  whether \(s < x\) or \(x < q\). If \(s < x\), then, similarly to the above, we
  show that \(\dset (r,s)\) and \(x\) are Hausdorff separated. And if \(x < q\),
  then \((p,q) \in \iota(x)\), so \(\dset (p,q) \ll \iota(x)\) by
  Lemma~\ref{Dedekind-way-below-criterion}. Thus, \(\iota(x)\) is strongly
  maximal in \(\Realdom\).

  Conversely, suppose that \(I \in \Realdom\) is strongly maximal.
  Define \(L \coloneqq \set{p \in \Rat \mid \exists_{q \in \Rat}\,(p,q) \in I}\)
  and \(U \coloneqq \set{q \in \Rat \mid \exists_{p \in \Rat}\,(p,q) \in I}\)
  and set \(x = \pa*{L,U}\). It is straightforward to show that \(x\) is
  bounded, rounded and transitive. We show that \(x\) is also located.
  So suppose that we have rationals \(p < q\). Then there exist rationals \(r\)
  and \(s\) such that \(\dset(p,q) \ll \dset(r,s)\), i.e.\ \(p < r < s < q\). By
  strong maximality of \(I\), we have \(\dset (p,q) \ll I\) or \(\dset (r,s)\)
  and \(I\) can be Hausdorff separated. If \(\dset(p,q) \ll I\), then
  \(p \in L\) (and \(q \in U\)) by Lemma~\ref{Dedekind-way-below-criterion}, so
  we are done. Now if \(\dset(r,s)\) and \(I\) are Hausdorff separated, then by
  Lemma~\ref{Hausdorff-separated-criterion}, there exist \(u \in L\) and
  \(v \in U\) such that the intervals \((r,s)\) and \((u,v)\) don't overlap. So
  either \(s < u\) or \(v < r\). If \(s < u\), then \(p < x\), and if \(v < r\),
  then \(x < q\).
  Thus, \(x = \pa*{L,U}\) is located and indeed a Dedekind real.
  Finally, one can verify that \(\iota(x) = I\), as desired.
\end{proof}

With excluded middle, the image of \(\iota\) is just the set of maximal elements
of \(\Realdom\). The following result highlights the constructive strength of
locatedness of Dedekind reals.

\begin{proposition}\label{Dedekind-max-strong-max-wem}
  If every maximal element of \(\Realdom\) is strongly maximal, then weak
  excluded middle holds.
\end{proposition}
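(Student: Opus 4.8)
The plan is to argue as in Propositions~\ref{maximal-strongly-maximal-wem-alt} and~\ref{seq-strongly-max-wem}: I would exhibit two strongly maximal elements of $\Realdom$ that are intrinsically apart and possess a greatest lower bound, and then invoke Lemma~\ref{maximal-strongly-maximal-wem}. Concretely, take $x \coloneqq \iota(0)$ and $y \coloneqq \iota(1)$, the images in $\Realdom$ of the rational Dedekind reals $0$ and $1$. By Theorem~\ref{Dedekind-iota-strongly-maximal}, both $x$ and $y$ are strongly maximal, so hypothesis~(i) of Lemma~\ref{maximal-strongly-maximal-wem} is satisfied.

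For hypothesis~(ii), I claim that $x$ and $y$ have a greatest lower bound in $\Realdom$, namely the set $J \coloneqq \iota(0) \cap \iota(1) = \set{(p,q) \in \Rat\times_{<}\Rat \mid p < 0 \text{ and } q > 1}$. Since the order on $\Realdom$ is inclusion, it suffices to verify that $J$ is a rounded ideal, i.e.\ a directed lower set of $(\Rat\times_{<}\Rat,\prec)$: it is plainly a lower set, it is inhabited because $(-1,2) \in J$, and given $(p_1,q_1),(p_2,q_2) \in J$ one may pick rationals $r < s$ with $\max(p_1,p_2) < r < 0$ and $1 < s < \min(q_1,q_2)$, so that $(r,s) \in J$ and $(p_i,q_i) \prec (r,s)$ for $i = 1,2$. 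Being the largest subset of both $x$ and $y$, the ideal $J$ is then $x \glb y$. For hypothesis~(iii), $x$ and $y$ are intrinsically apart: by Lemma~\ref{Dedekind-way-below-criterion} we have $\dset(-1,1/2) \ll \iota(0)$ since $(-1,1/2) \in \iota(0)$, while $\dset(-1,1/2) \not\ll \iota(1)$ since $(-1,1/2) \notin \iota(1)$; hence $\upupset\dset(-1,1/2)$ is a Scott open containing $x$ but not $y$, so $x \apart y$ (alternatively, $\upupset\dset(-1,1/2)$ and $\upupset\dset(1/2,2)$ are disjoint Scott opens separating $x$ and $y$).

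With all three hypotheses in place, Lemma~\ref{maximal-strongly-maximal-wem} shows that for every proposition $P$ the supremum $\dirsup S$ of $S \coloneqq \set{x \glb y} \cup \set{x \mid \lnot P} \cup \set{y \mid \lnot\lnot P}$ is maximal, and that if $\dirsup S$ is moreover strongly maximal then $\lnot P$ is decidable. By the hypothesis of the proposition every maximal element of $\Realdom$ is strongly maximal, so $\lnot P$ is decidable for every proposition $P$; that is, weak excluded middle holds. I expect the only step needing genuine care to be checking that $J$ is a rounded ideal (and hence the greatest lower bound); the remainder is a direct appeal to results already established, chiefly Theorem~\ref{Dedekind-iota-strongly-maximal} and Lemmas~\ref{Dedekind-way-below-criterion} and~\ref{maximal-strongly-maximal-wem}.
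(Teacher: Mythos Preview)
Your proof is correct and follows the same approach as the paper: apply Lemma~\ref{maximal-strongly-maximal-wem} with \(x=\iota(0)\), \(y=\iota(1)\), invoke Theorem~\ref{Dedekind-iota-strongly-maximal} for strong maximality, and exhibit the greatest lower bound and the apartness. The only cosmetic difference is that your \(J=\iota(0)\cap\iota(1)=\set{(p,q)\mid p<0,\ q>1}\) is precisely the principal ideal \(\dset(0,1)\) in \(\Idl(\Rat\times_{<}\Rat,\prec)\), so the verification that it is a rounded ideal is automatic and need not be done by hand; the paper simply names it as \(\dset(0,1)\).
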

\begin{proof}
  We use Lemma~\ref{maximal-strongly-maximal-wem} with the following elements
  \(x \coloneqq \iota(0)\) and \(y \coloneqq \iota(1)\) of \(\Realdom\). By
  Theorem~\ref{Dedekind-iota-strongly-maximal}, the elements \(x\) and \(y\) are
  both strongly maximal. Moreover, \(x\) and \(y\) have a greatest lower bound,
  namely \(\dset(0,1)\). Finally, \(x\) and \(y\) are apart, as witnessed by the
  disjoint Scott opens \(\upupset\pa*{\dset(-1/2,1/2)}\) and
  \(\upupset \pa*{\dset(1/2,3/2)}\).
\end{proof}

We conjecture that \(\Realdom\) is similar to the Baire domain in that the
strongly maximal elements of \(\Realdom\) only coincide with the elements that
are both sharp and maximal if a constructive taboo holds.

\begin{lemma}\label{Dedekind-T0}
  The Dedekind real numbers are \(T_0\)-separated with respect to \(\apart_{\Rea}\),
  i.e.\ for \(x,y \in \Rea\) we have
  \(x \apart_{\Rea} y\) if and only if there exists an open \(U\)
  containing \(x\) but not \(y\) or vice versa.
\end{lemma}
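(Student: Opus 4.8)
The plan is to follow the pattern of Lemma~\ref{seq-T0}; the forward implication is routine, while the real work lies in the converse, where the failure of membership in a basic open — a merely negative statement — has to be upgraded to a positive apartness using locatedness of Dedekind reals. For the forward direction, suppose $x \apart_{\Rea} y$ and, without loss of generality, that there is a rational $p$ with $x < p < y$ (if instead $y < p < x$, the same construction with the roles of $x$ and $y$ interchanged produces an open around $x$). By boundedness of $y$ there is a rational $s$ with $y < s$, so $U \coloneqq \set{z \in \Rea \mid p < z \text{ and } z < s}$ is a basic open with $y \in U$. It does not contain $x$: since $x < p$, transitivity of $x$ forbids $p < x$ as well, so $x$ fails the defining condition of $U$. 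This witnesses the required separation.

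For the converse, suppose $U$ is open with $x \in U$ and $y \notin U$ (the case with $x$ and $y$ swapped is symmetric). By the description of the basic opens of $\Rea$, there are rationals $p,q$ with $p < x$, $x < q$ and $\set{z \in \Rea \mid p < z \text{ and } z < q} \subseteq U$. By roundedness of $x$ there are rationals $p',q'$ with $p < p' < x$ and $x < q' < q$. Now apply locatedness of $y$ to the pair $p < p'$, obtaining $p < y$ or $y < p'$, and to the pair $q' < q$, obtaining $q' < y$ or $y < q$. If $y < p'$, then $y < p' < x$, so $x \apart_{\Rea} y$; if $q' < y$, then $x < q' < y$, so again $x \apart_{\Rea} y$. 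The only remaining case is that $p < y$ and $y < q$ both hold, but then $y$ lies in $\set{z \in \Rea \mid p < z \text{ and } z < q} \subseteq U$, contradicting $y \notin U$. Hence $x \apart_{\Rea} y$ in all possible cases.

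The main obstacle is precisely this last case analysis: unlike the sequence setting, where $\sigma \nprec \beta$ immediately yields an index at which $\initseg{\alpha}{n}$ and $\initseg{\beta}{n}$ disagree, here one only knows $\lnot\pa*{\pa*{p < y} \land \pa*{y < q}}$, which does not by itself produce a rational strictly between $x$ and $y$. The remedy is to first shrink the interval $(p,q)$ around $x$ using roundedness, and then invoke locatedness of $y$ as a genuinely constructive disjunction, choosing the interpolants $p'$ and $q'$ so that the single case which fails to deliver an apartness is exactly the one ruled out by $y \notin U$.
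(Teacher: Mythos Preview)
Your proof is correct and follows essentially the same approach as the paper: both shrink the basic open around \(x\) via roundedness to obtain interpolants (your \(p',q'\) are the paper's \(r,s\)) and then invoke locatedness of \(y\) to produce the separating rational, with the one branch that fails to do so being ruled out by \(y \notin U\). The only cosmetic difference is that the paper applies locatedness sequentially, using \(\lnot(y < q)\) explicitly to eliminate a subcase, whereas you run both locatedness instances in parallel and handle the resulting four cases at once; the underlying argument is the same.
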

\begin{proof}
  If \(x \apart_\Rea y\), then we may assume without loss of generality that
  there exists \(p \in \Rat\) with \(x < p < y\). Then there also exists
  \(q \in \Rat\) such that \(p < y < q\) and \(\set{z \in \Rea \mid p < z < q}\)
  is an open separating \(x\) and \(y\).
  Conversely, suppose that \(U\) is an open containing \(x\) but not \(y\). By
  description of the basic opens of \(\Rea\), there exists a rationals \(p\) and
  \(q\) with \(p < x < q\), while \(\lnot\pa*{\pa*{p < y} \land \pa*{y < q}
  }\). Now find \(r,s \in \Rat\) such that \(p < r < x < s < q\). Using
  locatedness of \(y\), we have \(p < y\) or \(y < r\). In the second case we
  see that \(x \apart_\Rea y\), as desired. And if \(p < y\), then as \(y\) is
  located and \(y < q\) is now impossible, we must have \(s < y\), so that
  \(x \apart_\Rea y\) again.
\end{proof}

\begin{therm}\label{Dedekind-iota-homeo}
  The map \(\iota\) is a homeomorphism from \(\Rea\) to the space of strongly
  maximal elements of the continuous dcpo \(\Realdom\) with the relative Scott
  topology.
  Moreover, \(\iota\) preserves and reflects apartness.
\end{therm}
\begin{proof}
  Let \(\SMax(\Realdom)\) be the space of strongly maximal elements of
  \(\Realdom\) with the relative Scott topology. By
  Theorem~\ref{Dedekind-iota-strongly-maximal}, the map
  \(\iota \colon \Rea \to \SMax(\Realdom)\) is a bijection. Now let \(p < q\) be
  rationals and consider the basic open
  \(
  \upupset(\dset (p,q))\) of \(\Realdom\).
  By Lemma~\ref{Dedekind-way-below-criterion}, this basic open is equal to
  \(\set{I \in \Realdom \mid (p,q) \in I}\).
  So Theorem~\ref{Dedekind-iota-strongly-maximal} tells us
  \(\upupset(\dset (p,q)) \cap \SMax(\Realdom) = \set{\iota(x) \mid p < x <
    q}\), which is \(\iota\) applied to a basic open of \(\Rea\). It follows
  that \(\iota\) is both open and continuous.
  Finally, note that
  \begin{align*}
    x \apart_{\Rea}y &\iff \exists\, U \subseteq \Rea \text{ open separating \(x\) and \(y\)}
    &&\text{(by Lemma~\ref{Dedekind-T0})} \\
    &\iff \exists\, U \subseteq \Realdom \text{ open separating \(\iota(x)\) and \(\iota(y)\)}
    &&\text{(since \(\iota\) is a homeomorphism)} \\
    &\iff \iota(x) \apart \iota(y) &&\text{(by definition of the intrinsic apartness)}
  \end{align*}
  as desired.
\end{proof}

\subsection{Lower reals}
We now consider lower reals, which feature a nice illustration of sharpness.
\begin{definition}[Lower reals \(\LowerReal\)]
  The pair \(\pa*{\Rat , <}\) is an abstract basis, so
  \(\LowerReal \coloneqq \Idl(\Rat,<)\) is a continuous dcpo and we refer to its
  elements as \emph{lower reals}.
\end{definition}

\begin{lemma}\label{lower-reals-way-below-criterion}
  For every \(p \in \Rat\) and \(L \in \LowerReal\), we have
  \(\dset p \ll L\) if and only if \(p \in L\).
\end{lemma}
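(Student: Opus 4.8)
The plan is to argue directly from the description of $\LowerReal = \Idl(\Rat,<)$ as the poset of rounded ideals of $(\Rat,<)$ ordered by inclusion, with directed suprema computed as unions, and with basis $\{\dset q \mid q \in \Rat\}$.

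For the right-to-left implication I would proceed as follows. Suppose $p \in L$ and let $S \subseteq \LowerReal$ be a directed subset with $L \below \dirsup S = \bigcup S$. Then $p \in L \subseteq \bigcup S$, so $p \in M$ for some $M \in S$. Since $M$ is a lower set of $(\Rat,<)$ containing $p$, every $r \in \Rat$ with $r < p$ lies in $M$, i.e.\ $\dset p \subseteq M$. This exhibits a member of $S$ above $\dset p$, so $\dset p \ll L$.

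For the left-to-right implication, I would first observe that $\{\dset q \mid q \in L\}$ is a directed subset of $\LowerReal$: it is inhabited because $L$ is, and it is semidirected because $L$ is directed as an ideal and $<$ is transitive (so if $q_1,q_2 \in L$ and $q_3 \in L$ satisfies $q_1 < q_3$, $q_2 < q_3$, then $\dset q_1 \subseteq \dset q_3 \supseteq \dset q_2$). Moreover $L = \bigcup_{q \in L}\dset q$: the inclusion $\supseteq$ holds because $L$ is a lower set, and $\subseteq$ holds because $L$ is rounded, so any $r \in L$ has some $q \in L$ with $r < q$, whence $r \in \dset q$. Thus $L = \dirsup\{\dset q \mid q \in L\}$, and from $\dset p \ll L$ we get some $q \in L$ with $\dset p \subseteq \dset q$. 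It remains to deduce $p \in L$, and the key step is to show $p \leq q$: assuming instead $q < p$, density of $\Rat$ yields $r$ with $q < r < p$, so $r \in \dset p \subseteq \dset q$, i.e.\ $r < q$, contradicting $q < r$; hence $\lnot(q < p)$, and by linearity of the order on $\Rat$ this gives $p \leq q$. Since $q \in L$ and $L$ is a lower set, $p \in L$ follows.

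The only genuinely delicate point is this last deduction of $p \in L$ from $\dset p \subseteq \dset q$: constructively it is not a purely order-theoretic triviality but relies on the density of $\Rat$ and the linearity (indeed decidability) of $<$ on $\Rat$; everything else is routine unfolding of the rounded ideal completion. One could alternatively shortcut the first half by invoking the standard characterization of the way-below relation on rounded ideal completions, exactly as in the proof of Lemma~\ref{Dedekind-way-below-criterion}.
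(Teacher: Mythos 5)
Your proof is correct and takes essentially the same route as the paper, whose proof just says ``similar to Lemma~\ref{Dedekind-way-below-criterion}'': reduce \(\dset p \ll L\) to the existence of some \(q \in L\) with \(\dset p \subseteq \dset q\), and then finish by order arithmetic on \(\Rat\). The only differences are cosmetic: you prove the standard rounded-ideal characterization of \({\ll}\) inline (by exhibiting \(L\) as the directed union \(\bigcup_{q \in L} \dset q\)) where the paper cites it via \cite[Proposition~2.2.22]{AbramskyJung1995}, and you conclude \(p \in L\) using density together with decidability of \(<\) on \(\Rat\) where the paper's Dedekind argument uses roundedness and trichotomy --- the same content in a slightly different arrangement, and constructively unproblematic in both forms.
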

\begin{proof}
  Similar to Lemma~\ref{Dedekind-way-below-criterion}.
\end{proof}

\begin{lemma}\label{U-from-L}
  If \(L \in \LowerReal\) is a lower real, then the pair \((L,U)\) with
  \(U \coloneqq \set*{q \in \Rat \mid \exists_{s \in \Rat \setminus L}\,s < q}\)
  is rounded and transitive in the sense
  of~Definition~\ref{def:Dedekind-real}.
  Moreover, if \(\Rat \setminus L\) is inhabited, then \((L,U)\) is bounded too.
\end{lemma}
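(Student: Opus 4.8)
The plan is to unwind the definition of a lower real — an element of $\LowerReal = \Idl(\Rat,<)$ is by definition a directed lower set of $(\Rat,<)$ — and then verify the properties required by Definition~\ref{def:Dedekind-real} (roundedness, transitivity, and under the extra hypothesis, boundedness) by elementary manipulations, using only that the order on $\Rat$ is dense and decidable. The preliminary observation I would extract first is that $L$ itself is already \emph{rounded}: directedness gives, for any $p \in L$, some $r \in L$ with $p < r$ (take the upper bound of $p$ with itself), while $L$ being a lower set gives the converse; hence $p \in L \iff \exists_{r\in\Rat}\,(p < r)\land(r \in L)$, which is exactly the clause of the roundedness condition for $x = (L,U)$ concerning $p < x$.

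\textbf{Key steps.} For the other half of roundedness, concerning $x < q$, i.e. $q \in U$: the implication $\exists_{s\in\Rat}\,(s<q)\land(s\in U) \Rightarrow q\in U$ follows by transitivity of $<$ on $\Rat$, since unfolding $s \in U$ yields some $t \in \Rat\setminus L$ with $t < s < q$. Conversely, if $q \in U$, witnessed by $s \in \Rat\setminus L$ with $s < q$, I would use density of $\Rat$ to pick $s'$ with $s < s' < q$; then the same $s \notin L$ witnesses $s' \in U$, and $s' < q$, as needed. For transitivity, suppose $p \in L$ and $q \in U$, say $s \in \Rat\setminus L$ with $s < q$; since $L$ is a lower set with $p \in L$ but $s \notin L$, we cannot have $s < p$, so decidability of the order on $\Rat$ gives $p \leq s$, whence $p \leq s < q$ and $p < q$. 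Finally, if $\Rat\setminus L$ is inhabited, boundedness is immediate: $L$ is inhabited (being an ideal), so pick $p \in L$ for the lower bound, and picking $s \in \Rat\setminus L$ we may take $q \coloneqq s + 1 \in U$ for the upper bound.

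\textbf{Main obstacle.} I do not expect a genuine obstacle: the statement is essentially bookkeeping about rounded ideals. The only points needing a little constructive care are (i) the transitivity argument, where one converts $\lnot(s < p)$ into $p \leq s$ — legitimate because $\Rat$ has decidable order — and (ii) the $q \in U$ direction of roundedness, which relies on density of $\Rat$; both are entirely standard.
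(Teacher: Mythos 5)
Your proposal is correct and follows essentially the same route as the paper's proof: roundedness of the $L$-side comes for free from $L$ being a rounded ideal, the $U$-side uses density of $\Rat$ and the observation that witnesses of membership in $U$ lie outside $L$, transitivity is settled by trichotomy/decidability of the rational order, and boundedness follows from inhabitedness of $L$ (directedness) together with any rational above an element of $\Rat\setminus L$. The minor differences (deriving $p \leq s$ from $\lnot(s<p)$ rather than refuting $q \leq p$, and the explicit witness $s+1$) are cosmetic.
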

\begin{proof}
  Let \(L \in \LowerReal\) be a lower real and let \(U\) be as in the lemma.  We
  claim that \(U \subseteq \Rat \setminus L\). For if \(q \in U\), then there
  exists \(s \in \Rat \setminus L\) with \(s < q \). But \(L\) is a lower set,
  so \(q \in L\) would imply \(s \in L\), contradicting that \(s \not\in L\).
  We first prove transitivity. Suppose that \(p \in L\) and \(q \in U\). By
  trichotomy on the rationals, it suffices to prove that \(q \leq p\) is
  impossible. So assume for a contradiction that \(q \leq p\). Then \(q \in L\),
  because \(L\) is a lower set. But \(q \in U \subseteq \Rat \setminus L\), so
  \(q \not\in L\), contradicting \(q \in L\).
  For roundedness, observe that
  \(p \in L \iff \exists_{r \in \Rat}\pa*{p < r} \land \pa*{r \in L}\), because
  \(L\) is a rounded ideal.
  Now suppose that \(q \in U\). Then there exists \(s \in \Rat \setminus L\)
  with \(s < q\). Now find \(r \in \Rat\) such that \(s < r < q\) and we see
  that \(r \in U\).
  Conversely, if we have \(s \in \Rat\) with \(s < q\) and \(s \in U\), then
  \(q \in U\), because \(s \in U \subseteq \Rat\setminus L\).
  Hence, \(q \in U \iff \exists_{s \in \Rat}\pa{s < q} \land \pa{s \in U}\), so
  \((L,U)\) is rounded.
  Finally, if \(\Rat \setminus L\) is inhabited, then \((L,U)\) is bounded,
  because \(L\) is inhabited too, as it is directed.
\end{proof}

Classically, every lower real whose complement is inhabited determines a
Dedekind real by the construction above. It is well-known that constructively a
lower real may fail to be located. The following result offers a
domain-theoretic explanation of that phenomenon.

\begin{therm}\label{sharp-iff-located}
  A lower real \(L \in \LowerReal\) is sharp if and only if the pair \((L,U)\)
  with \(U\)~as in Lemma~\ref{U-from-L} is located.
\end{therm}
\begin{proof}
  Suppose that \(L \in \LowerReal\) is sharp and let \(p < q\) be rationals.
  Find \(r \in \Rat\) such that \(p < r < q\). By
  Lemma~\ref{lower-reals-way-below-criterion}, we have
  \(\dset p \ll \dset r \ll \dset q\). By sharpness, we have
  \(\dset p \ll L\) or \(\dset r \not\subseteq L\). In the first case,
  \(p \in L\) and we are done; and if \(\dset r \not\subseteq L\), then
  \(r \not\in L\), so \(q \in U\). Hence, \((L,U)\) is located.
  For the converse, assume that \((L,U)\) is located. We use
  Lemma~\ref{sharp-basis} to prove that \(L\) is sharp. So let \(p,q \in \Rat\)
  with \(\dset p \ll \dset q\). By Lemma~\ref{lower-reals-way-below-criterion},
  this yields \(p < q\). By locatedness, \(p \in L\) or \(q \in U\). If
  \(p \in L\), then \(\dset p \ll L\) and we are done; and if \(q \in U\), then
  we have \(s \in \Rat \setminus L\) with \(s < q\) so that
  \(\dset q \not\subseteq L\). Hence, \(L\) is sharp, as desired.
\end{proof}

\subsection{An alternative domain for Cantor space}\label{lifting-Cantor}
An alternative domain for Cantor space is given by embedding Cantor space
\(\Two^\Nat\) into the exponential of free pointed dcpos
\(\lifting(\Two)^{\lifting(\Nat)}\); we study sharpness for this domain.

Classically, the free pointed dcpo on a set \(X\) is given by the flat dcpo
\(X \cup \set{\bot}\). Constructively, we use the
\emph{lifting}~\citep{EscardoKnapp2017} of \(X\) and we denote it by
\(\lifting(X)\). We can explicitly describe the elements of \(\lifting(X)\) as
partial maps from a singleton to \(X\), but for our present purposes it will be
easier to work with the lifting abstractly and only use the following
properties, which were proved in~\citep{deJongEscardo2021a}
and~\citep{deJong2022}:

\begin{itemize}
\item The unit \(\eta \colon X \to \lifting(X)\) is injective for every set
  \(X\).
\item The elements in the image of \(\eta\) are all incomparable in the order of
  \(\lifting(X)\).
\item For every map of sets \(f \colon X \to Y\), the functor \(\lifting\)
  yields a Scott continuous function \(\lifting(f)\) that is \emph{strict},
  i.e.\ \(\lifting(f)\) preserves the least element.
  Also, if \(\lifting(f) = \lifting(g)\), then \(f = g\) by injectivity and
  naturality of \(\eta\).
\item The lifting \(\lifting(X)\) is algebraic and bounded complete for every
  set \(X\). Its compact elements are given by
  \(\set{\bot} \cup \set{\eta(x) \mid x \in X}\).
\end{itemize}

\begin{definition}[\(\varepsilon\)]
  The lifting functor \(\lifting\) defines an injection
  \(\varepsilon \colon \Two^\Nat \hookrightarrow
  \lifting(\Two)^{\lifting(\Nat)}\) from Cantor space into the exponential dcpo.
\end{definition}

The exponential \(\lifting(\Two)^{\lifting(\Nat)}\) is important in higher-type
computability~\citep{Escardo2008} for instance.
What is noteworthy about \(\lifting(\Two)^{\lifting(\Nat)}\) is that, unlike in
Section~\ref{sec:Cantor-and-Baire-domains}, it are not the (strongly) maximal
elements of \(\lifting(\Two)^{\lifting(\Nat)}\) that matter, but the sharp
elements still play an important role.

\begin{therm}
  Every element in the image of \(\varepsilon\) is sharp, but not all of them
  are maximal. Also, not every sharp element is in the image of \(\varepsilon\).
\end{therm}
\begin{proof}
  Let \(\alpha \in \Two^\Nat\) be arbitrary. We wish to show that
  \(\varepsilon(\alpha)\) is sharp. By
  Propositions~\ref{dec-closed-under-exponentials} and~\ref{sharp-algebraic}, it
  suffices to show that \(\steppa{a \To b} \below \varepsilon(\alpha)\) is
  decidable for elements \(a \in \set{\bot}\cup\set{\eta(n) \mid n \in \Nat}\) and
  elements \(b \in \set{\bot}\cup\set{\eta(i) \mid i \in \Two}\).  By
  Lemma~\ref{step-function-below}, this reduces to proving decidability of
  \(b \below \varepsilon(\alpha)(a)\) for such elements \(a\) and \(b\).
  If \(b = \bot\), then the inequality certainly holds.
  So suppose that \(b = \eta(i)\) for some \(i \in \Two\).
  If \(a = \bot\), then \(\varepsilon(\alpha)(\bot) = \bot\) by strictness of
  \(\varepsilon(\alpha)\), so \(b = \eta(i) \below \varepsilon(\alpha)(a)\) is
  false.
  And finally, if \(a = \eta(n)\) for some \(n \in \Nat\), then
  \(\varepsilon(\alpha)(a) = \varepsilon(\alpha)(\eta(n)) \equiv
  \lifting(\alpha)(\eta(n)) = \eta(\alpha(n))\), by naturality of \(\eta\). So
  we need to decide \(\eta(i) \below \eta(\alpha(n))\), which is equivalent to
  \(\eta(i) = \eta(\alpha(n))\), because elements in the image of \(\eta\) are
  all incomparable. And \(\eta(i) = \eta(\alpha(n))\) holds if and only if
  \(i = \alpha(n)\) does, because \(\eta\) is injective. But \(i = \alpha(n)\)
  is decidable, because \(\Two\) has decidable equality.

  For the second claim, fix an element \(i \in \Two\) and consider the constant
  map \(n \mapsto i\) as an element of \(\Two^\Nat\). Notice that the
  inequality \(\varepsilon(n \mapsto i) \below \pa*{x \mapsto \eta(i)}\) holds
  in \(\lifting(\Two)^{\lifting(\Nat)}\). But \(\varepsilon(n \mapsto i)\) is
  strict, while \(x \mapsto \eta(i)\) is not, so \(\varepsilon(n \mapsto i)\)
  and \(x \mapsto \eta(i)\) are not equal and hence,
  \(\varepsilon(n \mapsto i)\) is not maximal.

  Finally, one can check that, for any \(i \in \Two\), the element
  \(x \mapsto \eta(i)\) in \(\lifting(\Two)^{\lifting(\Nat)}\) is sharp (in
  fact, it is strongly maximal). But it's not strict, so it cannot be in the
  image of \(\varepsilon\).
\end{proof}

\begin{therm}\label{alt-Cantor-homeo}
  The map \(\varepsilon\) is a homeomorphism from Cantor space to the image of
  \(\varepsilon\). Moreover, \(\varepsilon\) preserves and reflects apartness.
\end{therm}
\begin{proof}
  The basic opens of \(\lifting(\Two)^{\lifting(\Nat)}\) are of the form
  \(\upupset s = \upset s = \set{f \in \lifting(\Two)^{\lifting(\Nat)} \mid s
    \below f}\) with \(s\) the join of some Kuratowski finite subset
  \(\set{a_i \To b_i \mid 0 \leq i \leq n-1}\) of single-step functions, where
  \(a_k \in \set{\bot}\cup\set{\eta(n) \mid n \in \Nat}\) and
  \(b_k \in \set{\bot}\cup\set{\eta(i) \mid i \in \Two}\).
  By Lemma~\ref{step-function-below}, we have \(s \below f\) for a function
  \(f \in \lifting(\Two)^{\lifting(\Nat)}\) if and only if \(b_k \below f(a_k)\)
  for every \(0 \leq k \leq {n-1}\) %
  if and only if \(i_k = \alpha(n_k)\) for every such \(k\), where
  \(\beta_k = \eta(i_k)\).
  Now define the set of indices
  \(K \coloneqq \set {0 \leq k \leq n-1 \mid a_k,b_k \neq \bot}\).
  Because \(\varepsilon(\alpha)\) is strict for every \(\alpha \in \Two^\Nat\),
  we see that \(s \below \varepsilon(\alpha)\) if and only if
  \(b_k \below \varepsilon(\alpha)(a_k)\) for every \(k \in K\).
  If \(i \in \Two\) and \(n \in \Nat\), then
  \(\eta(i) \below \varepsilon(\alpha)(\eta(n))\) holds if and only if
  \(i = \alpha(n)\), by naturality of \(\varepsilon\), injectivity of \(\eta\),
  and the fact that elements in the image of \(\eta\) are incomparable.
  Hence, the set \(\upupset s \cap \image(\varepsilon)\) is seen to be
  \(\varepsilon\) applied to a basic open of Cantor space.
  It follows that \(\varepsilon\) is open and continuous.
  The proof that \(\varepsilon\) preserves and reflects apartness is similar to
  the proof of the second part of Theorem~\ref{seq-homeo-apart}.
\end{proof}

\nocite{Escardo2008}

\section{Conclusion}\label{sec:conclusion}
Working constructively, we studied continuous dcpos and the Scott topology and
introduced notions of intrinsic apartness and sharp elements.
We showed that our apartness relation is particularly well-suited for continuous
dcpos that have a basis satisfying certain decidability conditions, which hold in
examples of interest. For instance, for such continuous dcpos, the
Bridges--{V\^i\c{t}\v{a}} apartness topology and the Scott topology coincide.
We proved that no apartness on a nontrivial dcpo can be cotransitive
or tight unless (weak) excluded middle holds. But the intrinsic apartness is
tight and cotransitive when restricted to sharp elements.
If a continuous dcpos has a basis satisfying the previously mentioned
decidability conditions, then every basis element is sharp. Another class of
examples of sharp elements is given by the strongly maximal elements.
In fact, strong maximality is closely connected to sharpness and the Lawson
topology. For example, an element \(x\) is strongly maximal if and only if \(x\)
is sharp and every Lawson neighbourhood of~\(x\) contains a Scott neighbourhood
of~\(x\).
Finally, we presented several natural examples of continuous dcpos that
illustrated the intrinsic apartness, strong maximality and sharpness.

In future work, it would be interesting to explore whether a constructive and
predicative treatment is possible, in particular, in univalent foundations
without Voevodsky's resizing axioms as in~\citep{deJongEscardo2021a}. Steve
Vickers also pointed out two directions for future research. The first is to
consider formal ball domains~\cite[Example~V-6.8]{GierzEtAl2003}, which may
subsume the partial Dedekind reals example. The second is to explore the
ramifications of Vickers' observation that refinability
(Definition~\ref{def:refine}) is decidable, even when the order is not, if the
dcpo is algebraic and 2/3~SFP~\cite[p.~157]{Vickers1989}.
Related to the examples, there is still the question of whether we can derive a
constructive taboo from the assumption that strong maximality of a partial
Dedekind real follows from having both sharpness and maximality, as discussed
right after Proposition~\ref{Dedekind-max-strong-max-wem}.
Finally, the Lawson topology deserves further investigation within a
constructive framework.

\section*{Financial Support} This research received no specific grant from any
funding agency, commercial or not-for-profit sectors.

\section*{Competing interests} The author declares none.


\end{document}